\newcolumntype{F}{>{$}c<{\hspace{-0.9ex}$}}
\newcolumntype{:}{>{$}m{0.8ex}<{$}}
\newcolumntype{R}{>{$}r<{$}}
\newcolumntype{C}{>{$}c<{$}}
\newcolumntype{L}{>{$}l<{$}}
\newcolumntype{N}{@{}>{$}l<{$}}
\newlength\horspace
\newcommand{\h}[1][1.0]{\hspace{#1\horspace}}
\newlength\verspace
\newlength\negverspace
\tikzset{iso/.style={draw=none,every to/.append style={edge node={node [sloped, allow upside down, auto=false]{$\cong$}}}}}
\tikzset{simeq/.style={draw=none,every to/.append style={edge node={node [sloped, allow upside down, auto=false]{$\simeq$}}}}}
\tikzset{simeqS/.style={draw=none,every to/.append style={edge node={node [sloped, allow upside down, auto=false]{$\raisebox{0.8em}{$\simeq$}$}}}}}
\tikzset{simeqSRight/.style={draw=none,every to/.append style={edge node={node [sloped, allow upside down, auto=false]{$\raisebox{-1em}{\rotatebox{180}{$\simeq$}}$}}}}}
\tikzset{aiso/.style={simeqS,preaction={draw,->}}}
\tikzset{aisos/.style={simeqSs,preaction={draw,->}}}
\tikzset{Right/.style={double distance=1.7pt,>={Implies},->}}
\tikzset{twoiso/.style={simeqSRight,preaction={draw,Right}}}
\tikzset{dotdot/.style={dash pattern=on 0.25ex off 0.2ex, dash phase=0ex}}
\tikzset{simeqSs/.style={draw=none,every to/.append style={edge node={node [sloped, allow upside down, auto=false]{$\raisebox{-0.8em}{\rotatebox{180}{$\simeq$}}$}}}}}
\tikzset{RightA/.style={double distance=3.5pt,>={Implies},->},%
	triple/.style={-,preaction={draw,RightA}},%
	quadruple/.style={preaction={draw,RightA,shorten >=0pt},shorten >=1pt,-,double,double distance=0.8pt}}
\tikzset{simS/.style={draw=none,every to/.append style={edge node={node [sloped, allow upside down, auto=false]{$\raisebox{0.8em}{$\sim$}$}}}}}
\tikzset{aeq/.style={simS,preaction={draw,->}}}
\DeclareMathOperator{\bbicolim}{bicolim}
\newcommand{\bicolim}[2]{{\bbicolim}^{#1}\h{#2}}
\newcommand{\sigmabicolim}[1]{\sigma\mbox{-}\h[1.5]\bicolim{}{#1}}
\newtheorem{teor}{Theorem}[section]
\newtheorem{prop}[teor]{Proposition}
\newtheorem{costr}[teor]{Construction}
\newtheorem{teorintro}{Theorem}
\newtheorem{defneintro}[teorintro]{Definition}
\theoremstyle{definition}
\newtheorem{defne}[teor] {Definition}
\newtheorem{oss}[teor]{Remark}
\newtheorem*{notazione}{Notation}
\newtheorem{esempio}[teor]{Example}
\newenvironment{cd}{\[\begin{tikzcd}[row sep=7ex,column sep=7ex,ampersand replacement=\&]}{\end{tikzcd}\]\ignorespacesafterend}
\newenvironment{cds}[2]{\[\begin{tikzcd}[row sep=#1ex, column sep=#2ex,ampersand replacement=\&]}{\end{tikzcd}\]\ignorespacesafterend}
\newenvironment{cdN}{\begin{tikzcd}[row sep=7ex,column sep=7ex,ampersand replacement=\&]}{\end{tikzcd}\ignorespacesafterend}
\newenvironment{cdsN}[2]{\begin{tikzcd}[row sep=#1ex, column sep=#2ex,ampersand replacement=\&]}{\end{tikzcd}\ignorespacesafterend}
\newenvironment{eqD}[1]{\begin{equation}\label{#1}}{\end{equation}\ignorespacesafterend}
\newenvironment{eqD*}{\begin{equation*}}{\end{equation*}\ignorespacesafterend}
\def\:{\colon}
\def\phi{\varphi}
\def\dfn#1{{\bfseries\itshape #1}}
\def\predfn#1{{\itshape #1}}
\newcommand{\st}{^{\ast}}
\def\c{\circ}
\DeclareFontFamily{OT1}{pzc}{}
\DeclareFontShape{OT1}{pzc}{m}{it}{<->s*[1.21]pzcmi7t}{}
\DeclareMathAlphabet{\mathpzc}{OT1}{pzc}{m}{it}
\DeclareFontFamily{U}{dutchcal}{\skewchar\font=45}
\DeclareFontShape{U}{dutchcal}{m}{n}{<->s*[1.05] dutchcal-r}{}
\DeclareMathAlphabet{\mathlcal}{U}{dutchcal}{m}{n}
\newcommand{\catfont}[1]{\ensuremath{\mathpzc{#1}}\xspace}
\newcommand{\Cal}[1]{\ensuremath{\mathcal{#1}}\xspace}
\newcommand{\A}{\catfont{A}}
\newcommand{\C}{\catfont{C}}
\newcommand{\D}{\catfont{D}}
\newcommand{\K}{\catfont{K}}
\newcommand{\Cat}{\catfont{Cat}}
\newcommand{\Bicat}{\catfont{Bicat}}
\newcommand{\Tricat}{\catfont{Tricat}}
\newcommand{\Sh}[2][\@nil]{%
	\def\tmp{#1}%
	\ifx\tmp\@nnil{\ensuremath{\catfont{Sh}\hspace{-0.15ex}\left({#2}\right)}}%
	\else{\ensuremath{\catfont{Sh}\hspace{-0.15ex}\left({#2},{#1}\right)}}\fi}
\newcommand{\St}[2][\@nil]{%
	\def\tmp{#1}%
	\ifx\tmp\@nnil{\ensuremath{\catfont{St}\hspace{-0.15ex}\left({#2}\right)}}%
	\else{\ensuremath{\catfont{St}\hspace{-0.15ex}\left({#2},{#1}\right)}}\fi}
\newcommand{\Grothdiag}[1]{\Intdiag{#1}}
\newcommand{\Intdiag}[1]{\ensuremath{\scaleu{\int} \hspace{-0.15ex} #1}}
\newcommand{\HomC}[3]{{#1}\left({#2},\h[1]{#3}\right)}
\newcommand{\x}[1][]{\h[-1]\times_{#1}\h[-1]}
\newcommand{\xp}[2]{\h[-1]{\times}^{#2}_{#1}\h[-1]}
\newcommand{\opn}[1]{\operatorname{#1}}
\newcommand{\id}[1]{\operatorname{id}_{#1}}
\newcommand{\Psm}[2]{\ensuremath{\opn{Ps}\left[#1,#2\right]}\xspace}
\newcommand{\msigma}[2]{\ensuremath{\left[#1,#2\right]_{\sigma}}\xspace}
\newcommand{\op}{\ensuremath{^{\operatorname{op}}}}
\newcommand{\restr}[2]{{\left.\kern-\nulldelimiterspace {#1}\vphantom{\big|} \right|_{#2}}}
\newcommand{\scaleu}[2][1.2]{{\scalebox{#1}{$#2$}}}
\newcommand{\ar}[2][]{\xrightarrow[#1]{#2}}
\def\xlongrightarrowfill@{\arrowfill@\relbar\relbar\longrightarrow}
\newcommand{\arr}[2][]{%
	\ext@arrow 0099\xlongrightarrowfill@{#1}{#2}}
\newcommand{\aarr}[2][]{%
	\ext@arrow 0099\xlongrightarrowfill@{#1}{#2}} % in tikz
\newcommand{\aR}[2][]{%
	\ext@arrow 0055{\Rightarrowfill@}{#1}{#2}}
\def\xLongrightarrowfill@{\arrowfill@\Relbar\Relbar\Longrightarrow}
\newcommand{\aRR}[2][]{%
	\ext@arrow 0099\xLongrightarrowfill@{#1}{#2}}
\def\aitofill@{\arrowfill@{\lhook\joinrel\relbar}\relbar\rightarrow}
\newcommand{\aito}[2][]{%
	\ext@arrow 3095\aitofill@{#1}{#2}}
\def\Longaitofill@{\arrowfill@{\lhook\joinrel\relbar\joinrel\relbar}\relbar\rightarrow}
\newcommand{\aitoo}[2][]{%
	\ext@arrow 0099\Longaitofill@{#1}{#2}}
\def\xlongleftarrowfill@{\arrowfill@\longleftarrow\relbar\relbar}
\newcommand{\all}[2][]{%
	\ext@arrow 0099\xlongleftarrowfill@{#1}{#2}}
\newcommand{\aL}[2][]{%
	\ext@arrow 0055{\Leftarrowfill@}{#1}{#2}}
\def\xLongleftarrowfill@{\arrowfill@\Longleftarrow\Relbar\Relbar}
\newcommand{\aLL}[2][]{%
	\ext@arrow 0099\xLongleftarrowfill@{#1}{#2}}
\def\xmapstofill@{\arrowfill@{\mapstochar\relbar}\relbar\rightarrow}
\newcommand{\am}[2][]{%
	\ext@arrow 0395\xmapstofill@{#1}{#2}}
\def\xlongmapstofill@{\arrowfill@\relbar\relbar\longmapsto}
\newcommand{\amm}[2][]{%
	\ext@arrow 0399\xlongmapstofill@{#1}{#2}}
\newcommand{\eqq}{\DOTSB\protect\Relbar\protect\joinrel\Relbar}
\def\xeqqfill@{\arrowfill@\Relbar\Relbar\eqq}
\newcommand{\aeqq}[2][]{%
	\ext@arrow 0099\xeqqfill@{#1}{#2}}
\def\xRrightarrowfill@{\arrowfill@\equiv\equiv\Rrightarrow}
\newcommand{\aM}[2][]{\ext@arrow 0359\xRrightarrowfill@{#1}{#2}}
\newcommand{\Llongrightarrow}{%
	\DOTSB\protect\equiv\protect\joinrel\Rrightarrow}
\def\xLlongrightarrowfill@{\arrowfill@\equiv\equiv\Llongrightarrow}
\newcommand{\aMM}[2][]{%
	\ext@arrow 0099\xLlongrightarrowfill@{#1}{#2}}
\newcommand{\iso}{\cong}
\newcommand{\aequi}{\ensuremath{\stackrel{\raisebox{-1ex}{\kern-.3ex$\scriptstyle\sim$}}{\rightarrow}}}
\newcommand{\aequii}{\ensuremath{\stackrel{\raisebox{-1ex}{\kern-.3ex$\scriptstyle\sim$}}{\longrightarrow}}}
\newcommand{\PB}[1]{\arrow[#1,phantom,"\scalebox{1.6}{\color{black}$\lrcorner$}",very near start]}
\newcommand{\Ar}[4][]{\arrow[#2,"{#3}"{#1},""{name=#4, anchor=center}]}
\newcommand{\Ars}[4][]{\arrow[#2,"{#3}"'{#1},""{name=#4, anchor=center}]}
\newcommand{\Arb}[6][]{\arrow[#2,"{#3}"{#1},from=#4,to=#5,shorten <= #6 em, shorten >= #6 em]}
\newcommand{\Arbs}[6][]{\arrow[#2,"{#3}"'{#1},from=#4,to=#5,shorten <= #6 em, shorten >= #6 em]}
\NewDocumentEnvironment{cdl}{s O{7} O{7} b}{%
	\IfBooleanF{#1}{\begin{equation*}}\begin{tikzcd}[row sep=#2ex,column sep=#3ex,ampersand replacement=\&]
			#4
		\end{tikzcd}\IfBooleanF{#1}{\end{equation*}}\ignorespacesafterend}{}
\NewDocumentCommand{\csq}{s O{n} O{7} O{7} O{} O{2.7} O{2.2} O{0.5} O{n}}{%
	% N PBlaxoplax:p-l-o 7 7 twocell sh< sh> pos equal:l-d-r Square
	\def\foocsq##1##2##3##4##5##6##7##8{%
		\IfBooleanTF{#1}{\begin{cdl}*}{\begin{cdl}}[#3][#4]
				{##1}\ifx#2p{\PB{rd}}\fi\arrow[r,"{##5}"]\ifx#9l{\arrow[d,equal,"{##6}"']}\else{\arrow[d,"{##6}"']}\fi\&{##2}\ifx#9r{\arrow[d,equal,"{##7}"]}\else{\arrow[d,"{##7}"]}\fi\ifx#2l{\arrow[ld,Rightarrow,shorten <=#6ex,shorten >=#7ex,"{#5}"{pos=#8}]}\fi\\
				{##3}\ifx#9d{\arrow[r,equal,"{##8}"']}\else{\arrow[r,"{##8}"']}\fi\ifx#2o{\arrow[ur,Rightarrow,shorten <=#6ex,shorten >=#7ex,"{#5}"{pos=#8}]}\fi\&{##4}
		\end{cdl}}%
		\foocsq
}
\newcommand{\commaunivvN}[9][]{%
	\def\foocommaunivvN##1##2##3##4##5{%
		\begin{cdN}
			#2\arrow[rrd,bend left,"{#3}",""'{name=A}]\arrow[rdd,bend right=35,"{#4}"',""{name=B}]\arrow[rd,"{#5}"{#1}]\&[-4ex]\\[-4ex]
			\&#6 \arrow[r,"{##1}"] \arrow[d,"{##2}"'] \& #7 \arrow[ld,Rightarrow,"{##5}",shorten <=2.9ex,shorten >=2.5ex] \arrow[d,"{##3}"] \\
			\&#8 \arrow[r,"{##4}"'] \& #9
	\end{cdN}}%
	\foocommaunivvN%
}
\newcommand{\bicommaunivvN}[9][]{%
	\def\foobicommaunivvN##1##2##3##4##5##6##7{%
		\begin{cdN}
			#2\arrow[rrd,bend left,"{#3}",""'{name=A}]\arrow[rdd,bend right=35,"{#4}"',""{name=B}]\arrow[rd,"{#5}"{#1}]\&[-4ex]\\[-4ex]
			\&#6 \arrow[from=A,Rightarrow,"{##6}"{pos=0.4},shorten <=0.8ex,shorten >=0.8ex,]\arrow[to=B,Rightarrow,"{##7}",shorten <=0.3ex,shorten >=0.3ex,]\arrow[r,"{##1}"] \arrow[d,"{##2}"'] \& #7 \arrow[ld,Rightarrow,"{##5}",shorten <=2.9ex,shorten >=2.5ex] \arrow[d,"{##3}"] \\
			\&#8 \arrow[r,"{##4}"'] \& #9
	\end{cdN}}%
	\foobicommaunivvN%
}
\newcommand{\biisocomma}[9][0.5]{%
	\def\foobiisocomma##1##2##3{%
		\begin{cd}
			#2 \arrow[r,"{#6}"] \arrow[d,"{#7}"'] \& #3 \arrow[ld,twoiso,shorten <=##2ex,shorten >=##3ex,"{##1}"{pos=#1}] \arrow[d,"{#8}"]\\
			#4 \arrow[r,"{#9}"'] \& #5
	\end{cd}}%
	\foobiisocomma%
}
\NewDocumentCommand{\twonats}{s O{2.2} O{8} O{7} O{1.05} O{3.45} O{2}}{%
	% N lengthequal(2.2) dims sh< sh> shift Modif
	\def\footwonats##1##2##3##4##5##6##7##8##9{%
		\def\foofootwonats####1####2####3####4####5{%
			\IfBooleanTF{#1}{\begin{cdl}*}{\begin{cdl}}[#3][#4]
					##1 \Ar{r}{##9}{} \Ars{d,bend right=40}{##5}{A} \Ar{d,bend left=40}{##6}{B} \&
					##2 \Ars{d,bend left}{##8}{Q} \arrow[ld,Rightarrow,shift left=#7,"{####4}"{pos=0.48},shorten <=#5ex, shorten >=#6ex]\&[-2ex]
					##1 \Ar{r}{##9}{} \Ar{d,bend right}{##5}{R} \&
					##2 \Ars{d,bend right=40}{##7}{C} \Ar{d,bend left=40}{##8}{D} \arrow[ld,Rightarrow,shift right=#7,"{####5}"'{pos=0.52},shorten <=#6ex, shorten >=#5ex] \\
					##3 \Ars{r}{####1}{} \&
					##4 \&
					##3 \Ars{r}{####1}{} \&
					##4
					\Arbs{Rightarrow}{\,{####2}}{B}{A}{0.3}
					\Arbs{Rightarrow}{\,{####3}}{D}{C}{0.3}
					\Arb{equal}{}{Q}{R}{#2}
			\end{cdl}}%
			\foofootwonats}\footwonats}
\def\dfn#1{{\bfseries #1}}
\def\predfn#1{{\itshape #1}}
\newcommand{\cY}{\Cal{Y}}
\newcommand{\cC}{\Cal{C}}
\newcommand{\wt}{\widetilde}
\newcommand{\laxsliceslant}[2]{{\raisebox{.1em}{$#1$}\mkern-1mu\left/_{\mkern-4.1mu\operatorname{lax}}\right.\raisebox{-.25em}{$#2$}}}
\newcommand{\laxslice}[2]{\laxsliceslant{#1}{#2}}
\newcommand{\aP}[1]{
	\begin{cdsN}{7}{3}
		\hspace*{-1ex}\arrow[r,quadruple,"{#1}"]\&\hspace*{-1ex}
	\end{cdsN}
}
\newcommand{\Groth}[1]{\int{\hspace{-0.5ex} #1}}
\begin{document}

\title{2-stacks over bisites}
\author{Elena Caviglia}
\address{School of Computing and Mathematical Sciences, University of Leicester, United Kingdom}
\email{ec363@leicester.ac.uk}
\keywords{stack, 2-category, site, sieve, descent datum, subcanonical}
\subjclass[2020]{18F20, 18F10, 18N20, 18N10}

\begin{abstract}
We generalize the concept of stack one dimension higher, introducing a notion of 2-stack suitable for a trihomomorphism from a 2-category equipped with a bitopology into the tricategory of bicategories. Moreover, we give a characterization of 2-stacks in terms of explicit conditions, that are easier to use in practice. These explicit conditions are effectiveness conditions for appropriate data of descent on objects, morphisms and 2-cells, generalizing the usual stacky gluing conditions one dimension higher. Furthermore, we prove some new results on bitopologies. The main one is that every object of a subcanonical bisite can be seen as the sigma-bicolimit of each covering bisieve over it. This generalizes one dimension higher a well-know result for subcanonical Grothendieck sites.
\end{abstract}

\maketitle

\setcounter{tocdepth}{1}
\tableofcontents
				
\section*{Introduction}
Stacks generalize one dimension higher the fundamental concept of sheaf. They are pseudofunctors that are able to glue together weakly compatible local data into global data. The global data then recover the local ones in an equally weak way. Stacks were introduced by Giraud in \cite{Giraud}, and they have then had enormous success. The importance of stacks in geometry is due to their ability to take into account automorphisms of objects. While many classification problems do not have a moduli space as solution because of the presence of automorphisms, it is often nonetheless possible to construct a moduli stack. And in this sense stacks can also be seen as generalized spaces. Something similar also happens when one considers quotients of schemes by non-free group actions: in some cases the quotient is not a scheme but nonetheless a stack.

In recent years, the research community has begun generalizing the notion of stack one dimension higher. In \cite{Lurie}, Lurie studied a notion of  $(\infty,1)$-stack, that yields a notion of $(2,1)$-stack for a trihomomorphism that takes values in $(2,1)$-categories, when truncated to dimension 3. And Campbell, in their PhD thesis \cite{Campbellthesis}, introduced a notion of 2-stack that involves a trihomomorphism from a  one-dimensional category  into the tricategory $\Bicat$ of bicategories.

In this paper, we introduce a notion of 2-stack that is suitable for a trihomomorphism from a 2-category endowed with a bitopology into $\Bicat$. The notion of bitopology that we consider is the one introduced by Street in \cite{Streetcharbicatstacks} for bicategories. We achieve our definition of 2-stack by generalizing a characterization of stack due to Street \cite{Streetcharbicatstacks}. Our definition is the following.

\begin{defneintro}\label{2stackintro}
Let $(\K,\tau)$ be a bisite. A trihomomorphism $F:\K\op \to \Bicat$ is a \dfn{2-stack} if for every object $C\in \K$ and every covering bisieve $S\: R\Rightarrow y(C)$ in $\tau(C)$ the pseudofunctor
		$$-\c S\: \Tricat(\K\op, \Bicat)(y(C), F) \longrightarrow \Tricat(\K\op, \Bicat)(R, F)$$
		is a biequivalence. 
\end{defneintro}

The main motivation for us to introduce this notion of 2-stack comes from the theory of quotient stacks. In \cite{Genprinbundquost}, we generalized principal bundles and quotient stacks to the categorical context of sites. We then aimed at a generalization of our theory one dimension higher, to the context of bisites. We have constructed a higher dimensional analogue of principal bundles and of our explicit quotient prestacks. But there was no notion of higher dimensional stack suitable for the produced analogues of quotient prestacks in this two-categorical context. The results of this paper fill this gap. Indeed, thanks to this paper, we are able to prove that, if the bisite satisfies some mild conditions, our analogues of quotient stacks one dimension higher are 2-stacks. This result, together with the whole theory of principal bundles and quotient stacks in the context of bisites, will appear very soon in our forthcoming paper \cite{Prin2bunquo2stacks} as well as in our PhD thesis \cite{mythesis}.

Despite being born to suit quotient 2-stacks, our notion of 2-stack has a great potential of applications in many other contexts. We believe that instances of this notion could be found in algebraic geometry and algebraic topology, as well as in other areas of mathematics. In future work, we plan to explore new geometrical examples of 2-stacks and to further expand the theory of 2-stacks. In particular, we plan to introduce a notion of geometric 2-stack that will be important for applications in geometric contexts.

Since Definition \ref{2stackintro} is quite abstract and hard to apply in practice, we also prove a useful characterization in terms of explicit gluing conditions that can be checked more easily. A key idea behind this characterization is to use the Tricategorical Yoneda Lemma proved by Buhn\'e in their PhD thesis \cite{Buhneithesis} to translate the biequivalences required by Definition \ref{2stackintro} into effectiveness conditions of appropriate data of descent. As a biequivalence is equivalently a pseudofunctor which is surjective on equivalence classes of objects, essentially surjective on morphisms and fully faithful on 2-cells, we obtain effectiveness conditions for data of descent on objects, morphisms and 2-cells. The correct notions of data of descent are encoded in the notions of tritransformation, trimodification and perturbation, that form the codomain of the biequivalences of Definition \ref{2stackintro}. Our characterization of 2-stacks is the following:

\begin{teorintro}\label{char2stacksintro}
	A trihomomorphism $F\: \K\op \to \Bicat$ is a 2-stack if and only if for every $C\in \K$ and every covering bisieve $S\in \tau(C)$ the following conditions are satisfied:
	\begin{itemize}
		\item [(O)] every weak descent datum for $S$ of elements of $F$ is weakly effective;
		\item [(M)] every descent datum for $S$ of morphisms of $F$ is effective;
		\item [(2C)] every matching family for $S$ of 2-cells of $F$ has a unique amalgamation.
	\end{itemize}
\end{teorintro}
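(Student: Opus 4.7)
The plan is to unwind Definition \ref{2stackintro} into the three explicit conditions by combining the Tricategorical Yoneda Lemma of Buhn\'e with a direct description of the bicategory $\Tricat(\K\op,\Bicat)(R,F)$ as a bicategory of descent data for $S$.

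First I would invoke the Tricategorical Yoneda Lemma to identify the bicategory $\Tricat(\K\op,\Bicat)(y(C),F)$ with $F(C)$ via evaluation at $\id{C}$, up to biequivalence. Under this identification, the pseudofunctor $-\c S$ of Definition \ref{2stackintro} becomes the canonical restriction pseudofunctor from $F(C)$ to $\Tricat(\K\op,\Bicat)(R,F)$, which assigns to an object (respectively, a morphism, a 2-cell) of $F(C)$ the family of its restrictions along the covering morphisms in $S$.

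Second I would unpack the data of a tritransformation $R\Rightarrow F$, of a trimodification between two such tritransformations, and of a perturbation between two such trimodifications, using the fact that $R$ is the sub-pseudofunctor of the representable $y(C)$ determined by the covering morphisms in $S$. The objects of $\Tricat(\K\op,\Bicat)(R,F)$ should unravel to weak descent data for $S$ of elements of $F$, the morphisms to descent data for $S$ of morphisms of $F$, and the 2-cells to matching families for $S$ of 2-cells of $F$, matching the notions introduced earlier in the paper. This is the step I expect to be the main obstacle, since carefully translating the tricategorical coherences of $\Bicat$ into the compatibility conditions defining weak descent data, descent data of morphisms, and matching families requires meticulous bookkeeping of the 2-cell, 3-cell and coherence data; one has to verify that no additional condition emerges beyond those in the stated notions, and that every such condition is indeed forced.

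Third I would apply the standard characterization of biequivalences between bicategories: a pseudofunctor is a biequivalence if and only if it is surjective on equivalence classes of objects, essentially surjective on the hom-categories, and fully faithful on 2-cells. Via the identifications of the first two steps, surjectivity of the restriction pseudofunctor on equivalence classes of objects is precisely condition (O) that every weak descent datum of elements admits a weakly effective amalgamation; essential surjectivity on morphisms is precisely condition (M) that every descent datum of morphisms is effective; and full faithfulness on 2-cells is precisely condition (2C) that every matching family of 2-cells has a unique amalgamation. Combining the three steps yields the desired equivalence between Definition \ref{2stackintro} and the conjunction of (O), (M) and (2C).
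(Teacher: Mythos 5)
Your proposal follows essentially the same route as the paper: reduce via the Tricategorical Yoneda Lemma to the biequivalence of the composite pseudofunctor $F(C)\to\Tricat(\K\op,\Bicat)(R,F)$, unpack tritransformations, trimodifications and perturbations out of $R$ into weak descent data, descent data of morphisms and matching families of 2-cells, and then match the three conditions of the biequivalence characterization with (O), (M) and (2C). You also correctly identify that the substantive work lies in the coherence bookkeeping of the second step, which is exactly what the paper's Propositions on 2-cells, morphisms and objects carry out.
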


The obtained gluing conditions (O), (M) and (2C) generalize one dimension higher the usual gluing conditions satisfied by a stack.

Notice that it would have been hard to give the definition of 2-stack in these explicit terms from the beginning, as we would not have known the correct coherences to ask in the various gluing conditions. Our natural implicit definition is instead able to guide us in finding the right coherence conditions to require.

Before introducing our notion of 2-stack, we first prove some useful results on bisites. All these results will be crucial for us in our forthcoming paper \cite{Prin2bunquo2stacks} to prove that our analogues of quotient stacks one dimension higher are stacks. We believe that these results are useful and interesting per se. In particular, we focus on the case of subcanonical bitopologies, that are the ones for which all representables are stacks. They generalize one dimension higher the concept of subcanonical Grothendieck topology, that requires all representables to be sheaves. Almost all Grothendieck sites of interest in geometry are subcanonical, so it is useful to study in detail subcanonical bitopologies.

An important well-known result in the context of sites is that every object of a subcanonical site is the colimit of each covering sieve over it. This result appears for instance in \cite{Elephant} and it is very helpful when dealing with subcanonical topologies. In our paper \cite{Genprinbundquost}, it is one of the main ingredients of the proof that the produced generalized quotient prestacks are stacks when the site is nice enough. 

In this paper, we prove that every object of a subcanonical bisite can be expressed as some kind of two-dimensional colimit of each covering bisieve over it. This generalizes one dimension higher the well-known analogous result for sites described above. We achieve this result for bisites using sigma-bicolimits, that were introduced by Gray in \cite{Graybook} and then studied by Descotte, Dubuc and Szyld in \cite{DescotteDubucSzyld}. Sigma-bicolimits are a particular kind of conical two-dimensional colimit with coherent 2-cells inside the cocones. Every (weighted) bicolimit can be reduced to a (conical) sigma-bicolimit. We obtain the following theorem for subcanonical bisites.

\begin{teorintro}\label{teorsigmabicolimbisievesintro}
	Let $\tau$ be a subcanonical bitopology and let $S\: R \Rightarrow y(C)$ be a covering bisieve over $C$. Then $S$ is a sigma-bicolim bisieve, that is 
$$C=\sigmabicolim{F}$$
where $F\: \Groth{R}\to \K$ is the 2-functor of  projection to the first component.
\end{teorintro}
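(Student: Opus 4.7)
The plan is to imitate the classical one-dimensional argument one dimension up, using the tricategorical Yoneda Lemma of Buhné and the hypothesis that $\tau$ is subcanonical, i.e. that every representable $y(X)\: \K\op \to \Bicat$ is a 2-stack. Concretely, to prove $C = \sigmabicolim{F}$ I would test against an arbitrary object $X \in \K$ and produce a biequivalence between $\K(C,X)$ and the sigma-bilimit bicategory that computes sigma-cocones on $F$ with cotip $X$.

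First I would set up the chain of biequivalences. By the tricategorical Yoneda Lemma applied to $y(X)\: \K\op \to \Bicat$, there is a biequivalence
$$\K(C,X) \;\simeq\; \Tricat(\K\op,\Bicat)(y(C),y(X)).$$
Since $\tau$ is subcanonical, $y(X)$ is a 2-stack, so Definition \ref{2stackintro} applied to the covering bisieve $S\: R \Rightarrow y(C)$ yields a biequivalence
$$-\c S\: \Tricat(\K\op,\Bicat)(y(C),y(X)) \;\longrightarrow\; \Tricat(\K\op,\Bicat)(R,y(X)).$$
Composing these gives a biequivalence between $\K(C,X)$ and $\Tricat(\K\op,\Bicat)(R,y(X))$, natural in $X$ in the appropriate tricategorical sense.

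The key step is then to identify $\Tricat(\K\op,\Bicat)(R,y(X))$ with the bicategory of sigma-cocones on $F\: \Groth{R} \to \K$ with cotip $X$. Unpacking the data of a tritransformation $R \Rightarrow y(X)$: for each object $(D,f\: D\to C)$ of $\Groth{R}$ one obtains a morphism $D \to X$ in $\K$; for each morphism $(D,f) \to (D',f')$ of $\Groth{R}$ (i.e.\ $g\: D \to D'$ with $f'\c g = f$) one obtains a 2-cell in $\K$ relating the two morphisms into $X$; and the coherence axioms of the tritransformation, trimodification and perturbation levels translate exactly into the sigma-coherence conditions on these 2-cells, together with their morphisms and 2-cells of sigma-cocones. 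This is the same unpacking that, in the one-dimensional case, identifies $[\K\op,\Set](R,y(X))$ with the limit of $\K(F(-),X)$ over $(\Groth{R})\op$; here the richer structure of $R$ as a sub-trihomomorphism and of $\Bicat$ as a tricategory produces exactly the extra 2-cells demanded by the sigma-conical structure rather than a pseudo one.

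Combining the two biequivalences then yields
$$\K(C,X) \;\simeq\; \text{sigma-cocones on }F\text{ with cotip }X,$$
pseudonaturally in $X$, which is precisely the universal property defining $C$ as $\sigmabicolim{F}$. The main obstacle will be the bookkeeping in the unpacking step: carefully matching the coherence axioms of a tritransformation (together with its trimodifications and perturbations) into $y(X)$ with the coherence 2-cells and their axioms in the definition of sigma-bicolimit from \cite{Graybook, DescotteDubucSzyld}. Once this dictionary is laid out, the rest of the argument is a formal consequence of the tricategorical Yoneda Lemma and of Definition \ref{2stackintro}.
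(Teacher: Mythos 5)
Your skeleton --- test $C$ against an arbitrary $X$, pass through Yoneda, use subcanonicity to descend along $S$, and then identify the result with sigma-cocones on $F$ --- is exactly the architecture of the paper's proof. But your middle step rests on a hypothesis you do not have. The paper's Definition \ref{subcanbitopology} of ``subcanonical'' says that the representables are \emph{stacks} in the sense of Definition \ref{defstack2cat}, i.e.\ that $-\c S$ induces an \emph{equivalence of categories} between hom-categories taken in $\Psm{\K\op}{\Cat}$. It does \emph{not} say that $y(X)$ is a 2-stack in the sense of Definition \ref{2stackintro}, which is a biequivalence of hom-bicategories in $\Tricat(\K\op,\Bicat)$; conflating the two is a genuine gap. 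One could try to repair it by arguing that, since $y(X)$ and $R$ are $\Cat$-valued, the bicategory $\Tricat(\K\op,\Bicat)(R,y(X))$ degenerates to (something biequivalent to) the category $\HomC{\Psm{\K\op}{\Cat}}{R}{y(X)}$, so that ``stack'' and ``2-stack'' agree for representables --- but that is an extra lemma you would have to prove, and it is not needed if you work at the right level from the start. The same issue recurs at the end: Definition \ref{defsigmabicolim} asks for a pseudonatural \emph{equivalence of categories} $\HomC{\K}{C}{X}\simeq\HomC{\msigma{(\Grothdiag{R})\op}{\Cat}}{\Delta 1}{\HomC{\K}{F(-)}{X}}$, not a biequivalence of bicategories of ``sigma-cocones''.

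The paper stays one dimension lower throughout: it uses (i) full faithfulness of the 2-categorical Yoneda embedding to get $\HomC{\K}{C}{X}\iso\HomC{\Psm{\K\op}{\Cat}}{y(C)}{y(X)}$, (ii) the stack condition of Definition \ref{defstack2cat} for the representable $y(X)$ to pass to $\HomC{\Psm{\K\op}{\Cat}}{R}{y(X)}$, and (iii) the conicalization result (Proposition \ref{conicaliz}, due to Street and Descotte--Dubuc--Szyld) to identify this with the category of sigma-cocones --- all three being equivalences of categories, pseudonatural in $X$. Your proposed hand-unpacking of tritransformations, trimodifications and perturbations into $y(X)$ is thus both harder than necessary and aimed at the wrong target; note also that morphisms of $\Groth{R}$ are pairs $(g,\alpha)$ with $\alpha$ a 2-cell $h\Rightarrow\wt{f\c g}$, not strict triangles $f'\c g=f$, and it is precisely the invertibility of the structure 2-cells over those $(g,\alpha)$ with $\alpha$ invertible that the sigma-condition records. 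To fix the proof, replace the tricategorical Yoneda Lemma and the 2-stack condition by their one-dimension-lower counterparts and cite the conicalization proposition for the last step.
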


We also prove a result that considerably helps dealing with change of base of sigma-bicolim bisieves (Proposition \ref{coconofstar}).

Theorem \ref{teorsigmabicolimbisievesintro}, together with this result, will be crucial in \cite{Prin2bunquo2stacks} to prove that, under mild assumptions, the analogues of quotient stacks one dimension higher are 2-stacks.

\subsection*{Outline of the paper}
In section \ref{sectricat}, we recall tricategories and cells between them, as well as the Tricategorical Yoneda Lemma. These will be important for our definition of 2-stack and even more for the characterization of 2-stacks in terms of explicit gluing conditions.

In section \ref{secbisites}, we consider bisites, that are 2-categories endowed with a bitopology in the sense of Street's \cite{Streetcharbicatstacks}. We prove some useful results on bisites. The main result of this section is that every object of a subcanonical bisite can be expressed as a sigma-bicolimit of every covering bisieve over it (Theorem \ref{teorsigmabicolimbisieves}). We then prove a result that considerably helps dealing with change of base of sigma bicolim-bisieves (Proposition \ref{coconofstar}).

In section \ref{sec2stacks}, we introduce a notion of 2-stack suitable for a trihomomorphism from a bisite into the tricategory $\Bicat$ of bicategories (Definition \ref{2stack}).  Moreover, we present a characterization of 2-stack in terms of explicit gluing conditions (Theorem \ref{char2stacks}). This generalizes one dimension higher the usual stacky gluing conditions, and provides substantial benefits when using 2-stacks or proving that a trihomomorphism is a 2-stack.

\section{Preliminaries on tricategories}\label{sectricat}

In this section we recall tricategories and cells between them. Our main references will be  \cite{JohnsonYau} and \cite{Gurskithesis}. 

\begin{defne}
	A \dfn{tricategory} $T$ is given by the following data:
	\begin{itemize}
		\item a set $\opn{Ob}(T)$ of objects (usually simply denoted as $T$);
		\item given $a,b\in T$ a bicategory $T(a,b)$;
		\item given $a,b,c\in T$ a composition functor 
		$$\otimes\: T(b,c)\times T(a,b) \to T(a,c)$$
		\item given $a\in T$ a functor 
		$$I_a\: 1 \to T(a,a)$$
		(where $1$ is the unit bicategory);
		\item given $a,b,c,d \in T$ an adjoint equivalence $\alpha$ in \\$\Bicat(T(c,d)\x T(b,c)\x T(a,b),T(a,d))$
		\begin{cd}
			{T(c,d)\x T(b,c) \x T(a,b)} \arrow[r,"{\otimes \x 1}"] \arrow[d,"{1 \x \otimes}"'] \& {T(b,d) \x T(a,b)}\arrow[d,"{ \otimes}"] \arrow[ld,"{\alpha}", Rightarrow, shorten <= 6ex, shorten >= 7ex ]\\
			{T(c,d)\x T(a,c)} \arrow[r,"{\otimes}", ]\& {T(a,d);} 
			\end{cd}
		\item given $a,b\in T$ adjoint equivalences $l$ and $r$ in $\Bicat(T(a,b), T(a,b))$
		
		\begin{cdN}
			{T(a,b)} \arrow[rd,"{1}"',""{name=G}] \arrow[r,"{I_b \x 1}"]\& {T(b,b)\x T(a,b)}\arrow[to=G,"{l}", Rightarrow] \arrow[d,"{\otimes}"]\\
			{} \& {T(a,b)}
		\end{cdN}
	\hspace{0.1ex}
\begin{cdN}
		{T(a,b)} \arrow[rd,"{1}"',""{name=G}] \arrow[r,"{1 \x I_a}"]\& {T(a,b)\x T(a,a)}\arrow[to=G,"{r}", Rightarrow] \arrow[d,"{\otimes}"]\\
		{} \& {T(a,b);}
	\end{cdN}
\vspace{2.5mm}
\item given $a,b,c,d,e\in T$, an invertible modification $\pi$ in $\Bicat(T^4(a,b,c,d,e),T(a,e))$
\begin{eqD*}
\begin{cdsN}{3.4}{2.5}
	{} \& {T^4} \arrow[rr,"{\otimes \x 1 \x 1}"] \arrow[ld,"{1 \x 1\x \otimes}"'] \arrow[rd,"{1 \x \otimes \x 1}"]\& {} \& {T^3} \arrow[ld,"{\alpha\x 1}", Rightarrow, shorten <=1.2ex,shorten >=1.2ex ]\arrow[rd,"{\otimes \x 1}"] \& {} \\
	{T^3} \arrow[rd,"{1 \x \otimes}"'] \& {} \& {T^3} \arrow[ll,"{1 \x \alpha}",Rightarrow, shorten <=3.2ex,shorten >=3.2ex] \arrow[rr,"{\otimes \x 1}"] \arrow[ld,"{1\x \otimes}"]\& {} \& {T^2} \arrow[llld,"{\alpha}", Rightarrow, shorten <=6.7ex,shorten >=6.7ex] \arrow[ld,"{\otimes}"] \\
	{} \& {T^2} \arrow[rr,"{\otimes}"'] \& {} \& {T} \& {} 
\end{cdsN}
\aM{\pi}
\begin{cdsN}{3.4}{2.5}
	{} \& {T^4} \arrow[rr,"{\otimes \x 1 \x 1}"] \arrow[ld,"{1 \x 1\x \otimes}"'] \& {} \& {T^3} \arrow[llld,"{}", equal, shorten <=8ex, shorten >= 8 ex]\arrow[ld,"{1 \x \otimes}"]\arrow[rd,"{\otimes \x 1}"] \& {} \\
	{T^3} \arrow[rr,"{\otimes \x 1}"] \arrow[rd,"{1 \x \otimes}"'] \& {} \& {T^2} \arrow[ld,"{\alpha}", Rightarrow, shorten <= 1ex, shorten >= 1ex] \arrow[rd,"{\otimes}"] \& {} \& {T^2} \arrow[ll,"{\alpha}", Rightarrow, shorten <=2.5 ex, shorten >= 2.5 ex] \arrow[ld,"{\otimes}"] \\
	{} \& {T^2} \arrow[rr,"{\otimes}"'] \& {} \& {T;} \& {} 
\end{cdsN}
\end{eqD*}
\item given $a,b,c\in T$, invertible modifications $\mu$, $\lambda$ and $\rho$ 
\begin{eqD*}
\bicommaunivvN{T^2}{1}{1}{1\x I\x 1}{T^3}{T^2}{T^2}{T}{\otimes\x 1}{1\x\otimes}{\otimes}{\otimes}{\alpha}{r\x 1}{1\x l}
\h\aM{\mu}\h
\csq*[l][7][7][1]{T^2}{T^2}{T^2}{T}{1}{1}{\otimes}{\otimes}
\end{eqD*}
\begin{eqD*}
	\begin{cdsN}{5.5}{5.5}
		{} \& {T^3} \arrow[rd,"{\otimes \x 1}"] \arrow[d,"{l \x 1}", Rightarrow,shorten <=1.5ex,shorten >=1.5ex] \& {} \\
		{T^2} \arrow[dd,"{\otimes}"'] \arrow[rr,"{1}"', ""{name=O}]  \arrow[ru,"{I \x 1 \x 1}"] \& \hphantom{.} \& {T^2} \arrow[lldd,"{}", equal, shorten <= 11.3ex, shorten >=11.3 ex] \arrow[dd,"{\otimes}"] \\
		{} \& {} \& {} \\
		{T} \arrow[rr,"{1}"'] \& {} \& {T}
	\end{cdsN}
\h[3]\aM{\lambda}\h[3]
\begin{cdsN}{5.5}{5.5}
	{} \& {T^3} \arrow[lddd,equal, shorten <=14ex, shorten >= 14ex]\arrow[dd,"{1 \x \otimes}"] \arrow[rd,"{\otimes \x 1}"]  \& {} \\
	{T^2} \arrow[dd,"{\otimes}"]  \arrow[ru,"{I \x 1 \x 1}"] \& \hphantom{.} \& {T^2} \arrow[ld,"{\alpha}", Rightarrow, shorten <=3ex, shorten >= 3ex] \arrow[dd,"{\otimes}"'] \\
	{} \& {T^2} \arrow[rd,"{\otimes}"'] \arrow[d,Rightarrow,"{l}",shorten <=1.5ex, shorten >= 1.5ex] \& {} \\
	{T} \arrow[rr,"{1}"']  \arrow[ru,"{1\x I}"']\& \hphantom{.} \& {T} 
\end{cdsN}
\end{eqD*}
\begin{eqD*}
	\begin{cdsN}{5.5}{5.5}
		{} \& {T^3} \arrow[rd,"{1\x\otimes}"] \arrow[d,"{1 \x \hat{r}}", Leftarrow,shorten <=1.5ex,shorten >=1.5ex] \& {} \\
		{T^2} \arrow[dd,"{\otimes}"'] \arrow[rr,"{1}"', ""{name=O}]  \arrow[ru,"{1 \x 1 \x I}"] \& \hphantom{.} \& {T^2} \arrow[lldd,"{}", equal, shorten <= 11.3ex, shorten >=11.3 ex] \arrow[dd,"{\otimes}"] \\
		{} \& {} \& {} \\
		{T} \arrow[rr,"{1}"'] \& {} \& {T}
	\end{cdsN}
	\h[3]\aM{\rho}\h[3]
	\begin{cdsN}{5.5}{5.5}
		{} \& {T^3} \arrow[lddd,equal, shorten <=14ex, shorten >= 14ex]\arrow[dd,"{\otimes\x 1}"] \arrow[rd,"{1\x \otimes}"]  \& {} \\
		{T^2} \arrow[dd,"{\otimes}"]  \arrow[ru,"{1 \x 1 \x I}"] \& \hphantom{.} \& {T^2} \arrow[ld,"{\alpha}", Leftarrow, shorten <=3ex, shorten >= 3ex] \arrow[dd,"{\otimes}"'] \\
		{} \& {T^2} \arrow[rd,"{\otimes}"']\arrow[d,Leftarrow,"{\hat{r}}",shorten <=1.5ex, shorten >= 1.5ex]  \& {} \\
		{T} \arrow[rr,"{1}"']  \arrow[ru,"{1\x I}"']\& \hphantom{.} \& {T} 
	\end{cdsN}
\end{eqD*}
	\end{itemize}
See Definition 3.1.2 of \cite{Gurskithesis} for the axioms that these data are required to satisfy.
\end{defne}

\begin{notazione}
	In the previous definition we used the compact notation $T^n$ in place of the appropriate products of $n$ hom-bicategories of $T$. This notation will be used throughout the paper.
\end{notazione}

\begin{oss}
	Given $a,b\in T$ the objects of $T(a,b)$ are called \dfn{morphisms} of $T$ with source $a$ and target $b$, the morphisms  between them are called \dfn{2-cells} of $T$ and the 2-cells are called \dfn{3-cells} of $T$.
\end{oss}

We will now recall the cells between tricategories.

\begin{defne}\label{trihomomorphism}
	Let $T$ and $T'$ be tricategories. A \dfn{trihomomorphism} $F\: T \to T'$ is given by the following data:
	
	\begin{itemize}
		\item a function $F:\opn{Ob}(T) \to \opn{Ob}(T')$;
		\item given $a,b\in T$ a pseudofunctor 
		$F_{a,b}\: T(a,b) \to T'(F(a),F(b));$
		\item given $a,b,c,d\in T$ an adjoint equivalence $\chi\: \otimes' \c (F\x F) \Rightarrow F \c \otimes $ with left adjoint 
		\begin{cd}
			{T(b,c) \x T(a,b)} \arrow[r,"{F\x F}"] \arrow[d,"{\otimes}"'] \& {T'(F(b),F(c))\x T'(F(a), F(b))}\arrow[d,"{ \otimes'}"] \arrow[ld,"{\chi}", Rightarrow, shorten <= 6ex, shorten >= 7ex ]\\
			{T(a,c)} \arrow[r,"{F}"', ]\& {T'(F(a),F(c));} 
		\end{cd}
		\item given $a\in T$ an adjoint equivalence $\iota\: I'_{F(a)} \Rightarrow F\c I_a$ with left adjoint 
		\begin{cd}
			{1} \arrow[rd,"{I'_{F(a)}}"',""{name=G}] \arrow[r,"{I_a}"]\& {T(a,a)}\arrow[to=G,"{\iota}", Leftarrow] \arrow[d,"{F}"]\\
			{} \& {T'(F(a),F(a));}
		\end{cd}
	\item given $a,b,c,d\in T$  an invertible modification 
	\begin{eqD*}
		\begin{cdsN}{3.4}{2.5}
			{} \& {T^3} \arrow[rr,"{F \x F\x F}"] \arrow[ld,"{1 \x \otimes}"'] \arrow[rd,"{ \otimes \x 1 }"]\& {} \& {T'^3} \arrow[ld,"{\chi\x 1}", Rightarrow, shorten <=1.2ex,shorten >=1.2ex ]\arrow[rd,"{1 \x \otimes'}"] \& {} \\
			{T^2} \arrow[rd,"{1 \x \otimes}"'] \& {} \& {T^2} \arrow[ll,"{ \alpha}",Rightarrow, shorten <=3.2ex,shorten >=3.2ex] \arrow[rr,"{F\x F}"'] \arrow[ld,"{ \otimes}"]\& {} \& {T'^2} \arrow[llld,"{\chi}", Rightarrow, shorten <=6.7ex,shorten >=6.7ex] \arrow[ld,"{\otimes'}"] \\
			{} \& {T} \arrow[rr,"{F}"'] \& {} \& {T'} \& {} 
		\end{cdsN}
		\aM{\omega}
		\begin{cdsN}{3.4}{2.5}
			{} \& {T^3} \arrow[rr,"{F\x F \x F}"] \arrow[ld,"{1\x \otimes}"'] \& {} \& {T'^3} \arrow[llld,"{1 \x \chi}", shorten <=6.5ex, shorten >= 6.5 ex, Rightarrow]\arrow[ld,"{1 \x \otimes'}"]\arrow[rd,"{\otimes' \x 1}"] \& {} \\
			{T^2} \arrow[rr,"{F\x F}"'] \arrow[rd,"{\otimes}"'] \& {} \& {T'^2} \arrow[ld,"{\chi}", Rightarrow, shorten <= 1ex, shorten >= 1ex] \arrow[rd,"{\otimes'}"] \& {} \& {T'^2} \arrow[ll,"{\alpha'}", Rightarrow, shorten <=2.5 ex, shorten >= 2.5 ex] \arrow[ld,"{\otimes'}"] \\
			{} \& {T} \arrow[rr,"{F}"'] \& {} \& {T';} \& {} 
		\end{cdsN}
	\end{eqD*}
\item given $a,b\in T$ invertible modifications
\begin{eqD*}
\begin{cdsN}{5.5}{5.5}
	{} \& {T'^2}  \arrow[rd,"{\otimes '}"] \arrow[rddd,"{\chi}", Rightarrow, shorten <=10ex, shorten >= 10ex]   \& {} \\
	{T'} \arrow[rd,"{\iota \x 1}", Rightarrow, shorten <=3ex, shorten >= 3ex]    \arrow[ru,"{I'\x 1}"] \& \hphantom{.} \& {T'} \\
	{} \& {T^2} \arrow[uu,"{F\x F}"]  \arrow[rd,"{\otimes}"''] \arrow[d,Rightarrow,"{l}",shorten <=1.5ex, shorten >= 1.5ex] \& {} \\
	{T} \arrow[uu,"{F}"] \arrow[rr,"{1}"']  \arrow[ru,"{1\x I}"']\& \hphantom{.} \& {T} \arrow[uu,"{F}"''] 
\end{cdsN}
\h[3]\aM{\gamma}\h[3]
\begin{cdsN}{5.5}{5.5}
	{} \& {T'^2} \arrow[rd,"{\otimes'}"] \arrow[d,"{l}", Rightarrow,shorten <=1.5ex,shorten >=1.5ex] \& {} \\
	{T'}  \arrow[rr,"{1}"', ""{name=O}]  \arrow[ru,"{I' \x 1}"] \& \hphantom{.} \& {T'} \arrow[lldd,"{}", equal, shorten <= 11.3ex, shorten >=11.3 ex]  \\
	{} \& {} \& {} \\
	{T} \arrow[uu,"{F}"] \arrow[rr,"{1}"'] \& {} \& {T} \arrow[uu,"{F}"]
\end{cdsN}
\end{eqD*}
\begin{eqD*}
	\begin{cdsN}{5.5}{5.5}
		{} \& {T^2} \arrow[rd,"{\otimes}"] \arrow[d,"{\hat{r}}", Leftarrow,shorten <=1.5ex,shorten >=1.5ex] \& {} \\
		{T}  \arrow[rr,"{1}"', ""{name=O}]  \arrow[ru,"{1\x I}"] \& \hphantom{.} \& {T} \arrow[lldd,"{}", equal, shorten <= 11.3ex, shorten >=11.3 ex]  \\
		{} \& {} \& {} \\
		{T'} \arrow[uu,"{F}", leftarrow] \arrow[rr,"{1}"'] \& {} \& {T'} \arrow[uu,"{F}", leftarrow]
	\end{cdsN}
	\h[3]\aM{\delta}\h[3]
	\begin{cdsN}{5.5}{5.5}
		{} \& {T^2}  \arrow[rd,"{\otimes}"]   \& {} \\
		{T}   \arrow[ru,"{1 \x I}"] \& \hphantom{.} \& {T} \\
		{} \& {T'^2} \arrow[ru,"{\chi}", Rightarrow, shorten <=3ex, shorten >= 3ex]  \arrow[uu,"{F\x F}", leftarrow]  \arrow[rd,"{\otimes'}"''] \arrow[d,Leftarrow,"{l}",shorten <=1.5ex, shorten >= 1.5ex] \& {} \\
		{T'} \arrow[ruuu,"{1 \x \iota}", Rightarrow, shorten <=10ex, shorten >= 10ex]   \arrow[uu,"{F}", leftarrow] \arrow[rr,"{1}"']  \arrow[ru,"{1\x I'}"']\& \hphantom{.} \& {T'} \arrow[uu,"{F}"'', leftarrow] 
	\end{cdsN}
	\end{eqD*}
	\end{itemize}
See Definition 3.3.1 of \cite{Gurskithesis} for the axioms that these data are required to satisfy.
\end{defne}

\begin{defne}
	Let $F,G\: T \to T'$ be trihomomorphisms. A \dfn{tritransformation} $\theta\: F \Rightarrow G$ is given by the following data:
	\begin{itemize}
		\item given $a\in T$ a morphism $\theta_a\: F(a) \to G(a)$ in $T'$;
		\item given $a,b\in T$ an adjoint equivalence 
		\csq[l][7][7][\theta]{T(a,b)}{T'(F(a),F(b))}{T'(G(a),G(b))}{T'(F(a),G(b));}{F}{G}{T'(1, \theta_b)}{T'(\theta_a,1)}
		\item given $a,b,c\in T$ invertible modifications 
		\begin{eqD*}
			\scalebox{0.6}[0.65]{
		\begin{cdN}
			{} \& {} \& {T(b,c)\x T(a,b)} \arrow[ddl,"{G\x G}"'] \arrow[ddll,"{\otimes}"', bend right=15] \arrow[r,"{F\x F}"] \arrow[d,"{G\x F}"]\& {T'(F(b),F(c))\x T'(F(a),F(b))} \arrow[ld,"{\theta\x 1}", Rightarrow, ,shorten <=9.5ex, shorten >= 11.5ex] \arrow[d,"{T'(1,\theta_c)\x 1}"] \\
			{} \& {} \& {T'(G(b),G(c))\x T'(F(a),F(b))} \arrow[ld,"{1\x \theta}", Rightarrow,shorten <=9.5ex, shorten >= 11ex]\arrow[r,"{T'(\theta_b)\x 1}"] \arrow[d,"{1 \x T'(1,\theta_b)}"]\& {T'(F(b),G(c))\x T'(F(a),F(b))} \arrow[ld,"{\alpha}",shorten <=10.5ex, shorten >= 10.5ex, Rightarrow] \arrow[d,"{\otimes}"] \\
			{T(a,c)} \arrow[r,"{\chi}", Leftarrow,shorten <=1.5ex, shorten >= 1.5ex] \arrow[rrd,"{G}"', bend right=15] \& {T'(G(b),G(c))\x T'(G(a),G(b))} \arrow[r,"{1\x T'(\theta_a,1)}"] \arrow[rd,"{\otimes}"] \& {T'(G(b),G(c))\x T'(F(a),G(b))} \arrow[d,"{\hat{\alpha}}", Rightarrow,shorten <=1.5ex, shorten >= 1.5ex] \arrow[r,"{\otimes}"] \& {T'(F(a), G(c))} \\
			{} \& {} \& {T'(G(a), G(c))} \arrow[ru,"{T'(\theta_a,1)}"'] \& {} 
	\end{cdN}}
		\end{eqD*}
	\begin{eqD*}
		\begin{cdN}
			{\hphantom{.}} \arrow[d,"{\Pi}", triple]\\
			{\hphantom{.}}
		\end{cdN}
	\end{eqD*}
\begin{eqD*}
	\scalebox{0.6}[0.65]{
		\begin{cdN}
			{} \& {} \& {T(b,c)\x T(a,b)}  \arrow[ddll,"{\otimes}"', bend right=15] \arrow[r,"{F\x F}"] \arrow[d,"{F\x F}"]\& {T'(F(b),F(c))\x T'(F(a),F(b))} \arrow[ldd,"{\alpha}", Rightarrow, ,shorten <=15.5ex, shorten >= 15.5ex] \arrow[d,"{T'(1,\theta_c)\x 1}"] \\
			{} \& {} \& {T'(F(b),F(c))\x T'(F(a),F(b))} \arrow[lld,"{\chi}", Rightarrow,shorten <=9.5ex, shorten >= 9.5ex]  \arrow[d,"\otimes"]\& {T'(F(b),G(c))\x T'(F(a),F(b))}  \arrow[d,"{\otimes}"] \\
			{T(a,c)} \arrow[rr,"{F}"] \arrow[rrd,"{G}"', bend right=15] \& {}   \& {T'(F(a),F(c))}  \arrow[d,"{\hat{\alpha}}", Rightarrow,shorten <=1.5ex, shorten >= 1.5ex] \arrow[r,"{T'(1,\theta_c)}"] \& {T'(F(a), G(c))} \\
			{} \& {} \& {T'(G(a), G(c))} \arrow[ru,"{T'(\theta_a,1)}"'] \& {} 
	\end{cdN}}
\end{eqD*}
			and
			
			\begin{eqD*}
				\scalebox{0.7}{
				\begin{cdN}
					{} \& {1}\arrow[ld,"{\iota}", Rightarrow ,shorten <=7.5ex, shorten >= 7.5ex] \arrow[d,"{I_{F(a)}}"] \arrow[ld,"{I_a}"',""{name=B}, bend right= 25] \arrow[rdd,"{\theta_a}",""{name=A}, bend left=35] \& {} \\
					{T(a,a)} \arrow[r,"{F}"] \arrow[d,"{G}"']\& {T'(F(a), F(a))} \arrow[ld,"{\theta}", Rightarrow ,shorten <=5.5ex, shorten >= 5.5ex]  \arrow[from=A,"{\hat{r}}", Rightarrow ,shorten <=2.5ex, shorten >= 2.5ex] \arrow[rd,"{T'(1,\theta_a)}"] \& {} \\
					{T'(G(a), G(a))} \arrow[rr,"{T'(\theta_a,1)}"'] \& {} \& {T'(F(a), G(a))} 
					\end{cdN}
				\h[-40]
				\begin{cdN}
			{\hphantom{.}}  \arrow[r,"{M}", triple]\& {\hphantom{.}}
			\end{cdN}
		\h[-40]
		\begin{cdN}
			{} \& {1} \arrow[ld,"{I_a}"',""{name=B}, bend right= 25] \arrow[rdd,"{\theta_a}",""{name=A}, bend left=35] \arrow[ddl,"{I_{G(a)}}",""{name=C},bend left=35] \& {} \\
			{T(a,a)} \arrow[d,"{G}"]\& {\hphantom{.}} \arrow[l,"{\iota}", Rightarrow ,shorten <=3.5ex, shorten >= 3.5ex]\& {} \\
			{T'(G(a), G(a))} \arrow[from=A,"{\hat{l}}", Rightarrow ,shorten <=10.5ex, shorten >= 10.5ex, shift left= 3ex] \arrow[rr,"{T'(\theta_a,1)}"']\& {} \& {T'(F(a), G(a));} 
			\end{cdN}}
			\end{eqD*}
		% N PBlaxoplax:p-l-o 7 7 twocell sh< sh> pos equal:l-d-r Square
	\end{itemize}
See Definition 3.3.6 of \cite{Gurskithesis} for the axioms that these data are required to satisfy.
\end{defne}

\begin{defne}
	Let $\theta$ and $\phi$ be tritransformations with source $F$ and target $G$. A \dfn{trimodification} $m\: \theta \aM{} \phi$ is given by the following data:
	\begin{itemize}
		\item given $a\in T$ a 2-cell $m_a\: \theta_a \aR{} \phi_a$;
		\item given $a,b\in T$ an invertible modification 
		\begin{eqD*}
			\scalebox{0.8}{
			\begin{cdN}
				{T(a,b)}  \arrow[r,"{F}"] \arrow[d,"{G}"]\& {T'(F(a), F(b))} \arrow[d,"{T'(1, \theta_b)}"] \arrow[ld,"{\theta}", Rightarrow ,shorten <=4.5ex, shorten >= 4.5ex] \\
				{T'(G(a), G(b))} \arrow[r,"{T'(\theta_a,1)}", ""{name=I}]  \arrow[r,"{T'(\phi_a,1)}"',""{name=L}, bend right=68] \arrow[from=I, to=L,"{(m_a)\st}", Rightarrow ,shorten <=1.5ex, shorten >= 1.5ex]\& {T'(F(a), G(b))}
			\end{cdN}
		\h[-35]
			\begin{cdN}
			{\hphantom{.}}  \arrow[r,"{\wt{m}}"'{inner sep=1ex}, triple]\& {\hphantom{.}}
	\end{cdN}
\h[-45]
\begin{cdN}
	{T(a,b)}  \arrow[r,"{F}"] \arrow[d,"{G}"]\& {T'(F(a), F(b))} \arrow[d,"{T'(1, \theta_b)}",""{name=U}, bend left=68] \arrow[d,"{T'(1, \phi_b)}"', ""{name=V}] \arrow[ld,"{\phi}", Rightarrow ,shorten <=4.5ex, shorten >= 4.5ex] \\
	{T'(G(a), G(b))} \arrow[r,"{T'(\phi_a,1)}", ""{name=I}]  \arrow[from=V, to=U,"{(m_b)\st}", Leftarrow ,shorten <=0.5ex, shorten >= 1.5ex]\& {T'(F(a), G(b)).}
	\end{cdN}}
		\end{eqD*}
	\end{itemize}
See Definition 3.3.8 of \cite{Gurskithesis} for the axioms that these data are required to satisfy.
\end{defne}

\begin{defne}
	A \dfn{perturbation} $p\: m \aP{} n$ between trimodifications with the same source and target is a family of 3-cells $p_a\: m_a \aM{} n_a$ in the target tricategory T' indexed by objects of the source tricategory $T$ such that the following axiom hold:
	\twonats[2]{\theta\otimes F(f)}{G(f)\otimes \theta}{\phi\otimes F(f)}{G(f)\otimes \phi.}{n\otimes 1}{m\otimes 1}{1\otimes n}{1\otimes m}{\theta}{\phi}{\sigma \otimes 1}{1\otimes \sigma}{m}{n}
\end{defne}

We will now specialize some of the previous definitions to the particular case that we will need to consider later on. We will explicitly present  all the axioms involved as they will be useful in section \ref{sec2stacks}.

\begin{esempio}[tritransformation] \label{ourtritrans}
	Let $\K$ be a (strict) $2$-category and let $\Bicat$ be the tricategory of bicategories, pseudofunctors, pseudonatural transformations and modifications. Let then $R, F\:  \K\op \to \Bicat$ be trihomomorphisms.
	
	A tritransformation $\alpha\: R \Rightarrow F$ is given by:
	\begin{itemize}
		\item given $C\in K$ a pseudofunctor $\alpha_C\: R(C) \to F(C)$;
		\item given $C,D$ in $\K$ a pseudonatural transformation 
		\csq[l][7][7][\wt{\alpha}]{\K\op(C, D)}{\Bicat(R(C), R(D))}{\Bicat(F(C), F(D))}{\Bicat(R(C), F(D))}{R}{F}{\alpha_D\c -}{-\c \alpha_C}
		whose component $\alpha_f$ relative to $f:D \to C$
		\csq[l][7][7][\alpha_f]{R(C)}{R(D)}{F(C)}{F(D)}{R(f)}{\alpha_C}{\alpha_D}{F(f)}
		is an equivalence for every $f$;
		\item given $E\ar{g} D \ar{f} C$ in $\K$, invertible modifications
		\begin{eqD*}
			\scalebox{0.9}{
		\begin{cdN}
			{R(C)} \arrow[r,"{R(f)}"] \arrow[d,"{\alpha_C}"'] \&  {R(D)} \arrow[ld,"{\alpha_f}", Rightarrow ,shorten <=3.5ex, shorten >= 3.5ex] \arrow[r,"{R(g)}"]  \arrow[d,"{\alpha_D}"]\& {R(E)} \arrow[ld,"{\alpha_g}", Rightarrow ,shorten <=3.5ex, shorten >= 3.5ex] \arrow[d,"{\alpha_E}"] \\
			{F(C)} \arrow[r,"{F(f)}"]  \arrow[rr,"{F(f\c g)}"',""{name=D}, bend right = 55] \&  {F(D)}  \arrow[to=D,"{\chi_{f,g}}" ,shorten <=1.5ex, shorten >= 1.5ex, Rightarrow] \arrow[r,"{F(g)}"] \& {F(E)}
		\end{cdN}
	\h[-9]
		\begin{cdN}
		{\hphantom{.}}  \arrow[r,"{\beta}", triple]\& {\hphantom{.}}
	\end{cdN}
\h[-9]
\begin{cdN}
	{} \& {R(D)} \arrow[d,"{\chi_{f,g}}", Rightarrow ,shorten <=2.5ex, shorten >= 2.5ex] \arrow[rd,"{R(g)}", bend left= 30] \& {} \\
	{R(C)} \arrow[ru,"{R(f)}", bend left= 30] \arrow[rr,"{R(f\c g)}"',""{name=E}]  \arrow[d,"{\alpha_C}"']\& {\hphantom{.}} \& {R(E)} \arrow[lld,"{\alpha_{f\c g}}", Rightarrow ,shorten <=9.5ex, shorten >= 9.5ex]\arrow[d,"{\alpha_E}"] \\
	{F(C)} \arrow[rr,"{F(f\c g)}"']\& {} \& {F(E)} 
\end{cdN}}
	\end{eqD*}
and 
\begin{eqD*}
	\begin{cdN}
		{R(C)} \arrow[r,"{R(\id{C})}"', ""{name=K}] \arrow[r,"{}"{name=H}, equal, bend left= 60] \arrow[d,"{\alpha_C}"'] \arrow[from=H, to=K ,"{\iota_C}", Rightarrow ,shorten <=1ex]\& {R(C)} \arrow[ld,"{\alpha_{\id{C}}}", Rightarrow,shorten <=3.5ex, shorten >= 3.5ex] \arrow[d,"{\alpha_C}"] \\
		{F(C)} \arrow[r,"{F(\id{C})}"']\& {F(C)} 
	\end{cdN}
\h[-3]
	\begin{cdN}
		{\hphantom{.}}  \arrow[r,"{\gamma}", triple]\& {\hphantom{.}}
	\end{cdN}
\h[-3]
	\begin{cdN}
		{R(C)} \arrow[r,equal]  \arrow[d,"{\alpha_C}"'] \& {R(C)} \arrow[ld,"", equal,shorten <=4.5ex, shorten >= 4.5ex] \arrow[d,"{\alpha_C}"] \\
		{F(C)} \arrow[r,"{F(\id{C})}"'{name=H}, bend right= 60] \arrow[r,equal,""{name=G}] \arrow[from=G, to=H,"{\iota_C}",Rightarrow ,shorten <=1.5ex, shorten >= 1.5ex]\& {F(C)} 
	\end{cdN}
\end{eqD*}
	\end{itemize}
	 These data need to satisfy the following axioms.

	Given $L\ar{g} E \ar{f} D$ in $\K$, the following equality of modifications holds:
		\begin{eqD*}
			\begin{cdsN}{4.5}{4}
				R(C) \arrow[d,"{\alpha_C}"']\arrow[r,"{R(f)}"] \& R(D)\arrow[ld,Rightarrow,"{\alpha_f}", shorten <=1.85ex, shorten >=1.5ex] \arrow[d,"{\alpha_D}"]\arrow[r,"{R(g)}"]\& R(E)\arrow[ld,Rightarrow,"{\alpha_g}", shorten <=1.85ex, shorten >=1.5ex]\arrow[d,"{\alpha_E}"]\arrow[r,"{R(h)}"] \& R(L)\arrow[ld,Rightarrow,"{\alpha_h}", shorten <=1.85ex, shorten >=1.5ex]\arrow[d,"{\alpha_L}"] \\
				F(C)\arrow[rrr,bend right=35,"{F(f\c g\c h)}"'{pos=0.36},""{pos=0.25,name=B}] \arrow[r,"{F(f)}"'] \& F(D)\arrow[to=B,Rightarrow,"{\chi_{f,g\c h}}"{pos=0.23}] \arrow[rr,bend right,"{F(g\c h)}"'{pos=0.35,inner sep =0.2ex},""{name=A}] \arrow[r,"{F(g)}"']\& F(E) \arrow[to=A,Rightarrow,"{\chi_{g,h}}"] \arrow[r,"{F(h)}"'] \& F(L)
			\end{cdsN}\h[2]\aM{\beta_{g,h}}\h[2]
			\begin{cdsN}{4.5}{4}
				R(C) \arrow[d,"{\alpha_C}"']\arrow[r,"{R(f)}"] \& R(D)\arrow[rr,bend right,"{R(g\c h)}"'{pos=0.35,inner sep =0.2ex},""{name=A}]\arrow[ld,Rightarrow,"{\alpha_f}", shorten <=1.85ex, shorten >=1.5ex] \arrow[d,"{\alpha_D}"]\arrow[r,"{R(g)}"]\& R(E)\arrow[to=A,Rightarrow,"{\chi_{g,h}}"]\arrow[r,"{R(h)}"] \& R(L)\arrow[lld,Rightarrow,shift left=3.9ex,"{\alpha_{g\c h}}", shorten <=5ex, shorten >=4.3ex]\arrow[d,"{\alpha_L}"] \\
				F(C)\arrow[rrr,bend right=35,"{F(f\c g\c h)}"'{pos=0.36},""{pos=0.25,name=B}] \arrow[r,"{F(f)}"'] \& F(D)\arrow[to=B,Rightarrow,"{\chi_{f,g\c h}}"{pos=0.23}] \arrow[rr,bend right,"{F(g\c h)}"'{pos=0.35,inner sep =0.2ex}] \&\& F(L)
			\end{cdsN}
			\vspace*{-1.8ex}
		\end{eqD*}
		\begin{eqD*}\vspace*{-1.8ex}
			\begin{cdsN}{3}{3}
				\hphantom{.}\arrow[d,triple,"{\chi\hspace*{0.4ex}}"']\\
				\hphantom{.}
			\end{cdsN}\hspace{8cm}
			\begin{cdsN}{3}{3}
				\hphantom{.}\arrow[d,triple,"{\hspace*{0.4ex}\beta_{f,g\c h}}"]\\
				\hphantom{.}
			\end{cdsN}
		\end{eqD*}
		\begin{eqD*}
			\begin{cdsN}{4.5}{4}
				R(C) \arrow[d,"{\alpha_C}"']\arrow[r,"{R(f)}"] \& R(D)\arrow[ld,Rightarrow,"{\alpha_f}", shorten <=1.85ex, shorten >=1.5ex] \arrow[d,"{\alpha_D}"]\arrow[r,"{R(g)}"]\& R(E)\arrow[ld,Rightarrow,"{\alpha_g}", shorten <=1.85ex, shorten >=1.5ex]\arrow[d,"{\alpha_E}"]\arrow[r,"{R(h)}"] \& R(L)\arrow[ld,Rightarrow,"{\alpha_h}", shorten <=1.85ex, shorten >=1.5ex]\arrow[d,"{\alpha_L}"] \\
				F(C)\arrow[rr,bend right,"{F(f\c g)}"'{pos=0.65,inner sep =0.2ex},""{name=A}]\arrow[rrr,bend right=35,"{F(f\c g\c h)}"'{pos=0.63},""{pos=0.7,name=B}] \arrow[r,"{F(f)}"'] \& F(D)\arrow[to=A,Rightarrow,"{\chi_{f,g}}"]  \arrow[r,"{F(g)}"']\& F(E)\arrow[to=B,Rightarrow,"{\chi_{f\c g, h}}"{pos=0.23}] \arrow[r,"{F(h)}"'] \& F(L)
			\end{cdsN}\h[2]\hphantom{.\aM{}}\h[2]
			\begin{cdsN}{4.5}{4}
				R(C) \arrow[rrr,bend right=35,"{R(f\c g\c h)}"'{pos=0.36},""{pos=0.25,name=B}]\arrow[d,"{\alpha_C}"']\arrow[r,"{R(f)}"] \& R(D)\arrow[to=B,Rightarrow,"{\chi_{f,g\c h}}"{pos=0.23}]\arrow[rr,bend right,"{R(g\c h)}"'{pos=0.35,inner sep =0.2ex},""{name=A}] \arrow[r,"{R(g)}"]\& R(E)\arrow[to=A,Rightarrow,"{\chi_{g,h}}"]\arrow[r,"{R(h)}"] \& R(L)\arrow[llld,Rightarrow,shift left=5.9ex,"{\alpha_{f\c g\c h}}", shorten <=5ex, shorten >=4.3ex]\arrow[d,"{\alpha_L}"] \\
				F(C)\arrow[rrr,bend right=35,"{F(f\c g\c h)}"'{pos=0.36},""{pos=0.25}]  \& \&\& F(L)
			\end{cdsN}
			\vspace*{-1.8ex}
		\end{eqD*}
		\begin{eqD*}\vspace*{-1.8ex}
			\begin{cdsN}{3}{3}
				\hphantom{.}\arrow[d,triple,"{\beta_{f,g}\hspace*{0.4ex}}"']\\
				\hphantom{.}
			\end{cdsN}\hspace{8cm}
			\begin{cdsN}{3}{3}
				\hphantom{.}\arrow[d,triple,"{\hspace*{0.4ex}\chi}"]\\
				\hphantom{.}
			\end{cdsN}
		\end{eqD*}
		\begin{eqD*}
			\begin{cdsN}{4.5}{4}
				R(C)\arrow[rr,bend right,"{R(f\c g)}"'{pos=0.35,inner sep =0.2ex},""{name=A}] \arrow[d,"{\alpha_C}"']\arrow[r,"{R(f)}"] \& R(D)\arrow[r,"{R(g)}"]\arrow[to=A,Rightarrow,"{\chi_{f,g}}"] \& R(E)\arrow[lld,Rightarrow,shift left=3.9ex,"{\alpha_{f\c g}}"{pos=0.55}, shorten <=5ex, shorten >=4.3ex]\arrow[d,"{\alpha_E}"]\arrow[r,"{R(h)}"] \& R(L)\arrow[ld,Rightarrow,"{\alpha_h}", shorten <=1.85ex, shorten >=1.5ex]\arrow[d,"{\alpha_L}"] \\
				F(C)\arrow[rr,bend right,"{F(f\c g)}"'{pos=0.65,inner sep =0.2ex}]\arrow[rrr,bend right=35,"{F(f\c g\c h)}"'{pos=0.63},""{pos=0.7,name=B}] \& \& F(E)\arrow[to=B,Rightarrow,"{\chi_{f\c g, h}}"{pos=0.23}] \arrow[r,"{F(h)}"'] \& F(L)
			\end{cdsN}\h[2]\aM[\hspace*{-1ex}\beta_{f\c g,h}\hspace*{-1ex}]\relax\h[2]
			\begin{cdsN}{4.5}{4}
				R(C)\arrow[rr,bend right,"{R(f\c g)}"'{pos=0.65,inner sep =0.2ex},""{name=A}]\arrow[rrr,bend right=35,"{R(f\c g\c h)}"'{pos=0.36},""{pos=0.7,name=B}]\arrow[d,"{\alpha_C}"']\arrow[r,"{R(f)}"] \& R(D)\arrow[to=A,Rightarrow,"{\chi_{f,g}}"]  \arrow[r,"{R(g)}"]\& R(E)\arrow[to=B,Rightarrow,"{\chi_{f\c g, h}}"{pos=0.23}]\arrow[r,"{R(h)}"] \& R(L)\arrow[llld,Rightarrow,shift left=5.9ex,"{\alpha_{f\c g\c h}}", shorten <=5ex, shorten >=4.3ex]\arrow[d,"{\alpha_L}"] \\
				F(C)\arrow[rrr,bend right=35,"{F(f\c g\c h)}"'{pos=0.36},""{pos=0.25}]  \& \&\& F(L)
			\end{cdsN}
		\end{eqD*}
	
	\vspace{3.5mm}
(where the names of the modifications of type $\beta$ are given without taking into account the relative whiskerings and the modifications named $\chi$ are relative to the functoriality of $\chi$).

Given $f\: D\to C$ in $\K$, the following equalities of modifications hold:
\begin{samepage}
\begin{eqD*}
	\begin{cdsN}{5.5}{5}
		R(C)\arrow[d,"{\alpha_C}"'] \arrow[r,"{R(f)}"] \& R(D)\arrow[ld,Rightarrow,"{\alpha_f}", shorten <=1.85ex, shorten >=1.5ex]\arrow[d,"{\alpha_D}"]\arrow[r,bend right,"{R(\id{D})}"',""'{name=B}] \arrow[r,equal,"{}"{name=A}]\& R(D)\arrow[ld,Rightarrow,shift left=2.1ex,"{\alpha_{\id{D}}}"{pos=0.56,inner sep=0.25ex},end anchor={[xshift=-1.85ex]}, shorten <=1.85ex, shorten >=2ex]\arrow[d,"{\alpha_D}"] \\
		F(C)\arrow[rr,bend right=35,"{F(f)}"',""{name=C}] \arrow[r,"{F(f)}"'] \& F(D) \arrow[to=C,Rightarrow,"{\chi_{f,\id{D}}}"']\arrow[r,bend right,"{F(\id{D})}"]\& F(D)
		\arrow[from=A,to=B,Rightarrow,"{\zeta}", shorten <=0.56ex, shorten >=0.28ex]
	\end{cdsN}\h[2]\aM{\gamma_D}\relax\h[2]
	\begin{cdsN}{5.5}{5}
		R(C)\arrow[d,"{\alpha_C}"'] \arrow[r,"{R(f)}"] \& R(D)\arrow[ld,Rightarrow,"{\alpha_f}", shorten <=1.85ex, shorten >=1.5ex]\arrow[d,"{\alpha_D}"] \arrow[r,equal,"{}"{name=A}]\& R(D)\arrow[ld,equal,shorten <=3.7ex, shorten >= 3.7ex]\arrow[d,"{\alpha_D}"] \\
		F(C)\arrow[rr,bend right=35,"{F(f)}"',""{name=C}] \arrow[r,"{F(f)}"'] \& F(D) \arrow[r,"{}"{name=D}, equal] \arrow[to=C,Rightarrow,"{\chi_{f,\id{D}}}"']\arrow[r,bend right,"{F(\id{D})}"'{pos=0.46}, ""'{name=F}] \arrow[from=D, to=F,"{\zeta}", Rightarrow,shorten <=0.56ex, shorten >=0.28ex]\& F(D)
	\end{cdsN}
	\vspace*{-1.8ex}
\end{eqD*}
\begin{eqD*}\vspace*{-1.8ex}\hspace*{-0.7cm}
	\begin{cdsN}{3}{3}
		\hphantom{.}\arrow[d,triple,"{\beta_{f,\id{D}}\hspace*{0.4ex}}"']\\
		\hphantom{.}
	\end{cdsN}\hspace{6.5cm}
	\begin{cdsN}{3}{3}
		\hphantom{.}\arrow[d,triple,"{\hspace*{0.4ex}\chi}"]\\
		\hphantom{.}
	\end{cdsN}
\end{eqD*}
\begin{eqD*}
	\begin{cdsN}{5.5}{5}
		R(C)\arrow[rr,bend right=35,"{R(f)}"',""{name=C}]\arrow[d,"{\alpha_C}"'] \arrow[r,"{R(f)}"] \& R(D)\arrow[to=C,Rightarrow,"{\chi_{f,\id{D}}}"']\arrow[r,bend right,"{R(\id{D})}"',""'{name=B}] \arrow[r,equal,"{}"{name=A}]\& R(D)\arrow[lld,Rightarrow,shift left=4.35ex,"{\alpha_{f}}"{pos=0.56,inner sep=0.25ex},end anchor={[xshift=-1.45ex]}, shorten <=4ex, shorten >=4ex]\arrow[d,"{\alpha_D}"] \\
		F(C)\arrow[rr,bend right=35,"{F(f)}"'] \&\& F(D)
		\arrow[from=A,to=B,Rightarrow,"{\zeta}", shorten <=0.56ex, shorten >=0.28ex]
	\end{cdsN}\h[2]\hphantom{c\aM{}}\h[2]
	\begin{cdsN}{5.5}{5}
		R(C)\arrow[d,"{\alpha_C}"'] \arrow[r,"{R(f)}"] \& R(D)\arrow[ld,Rightarrow,"{\alpha_f}", shorten <=1.85ex, shorten >=1.5ex]\arrow[d,"{\alpha_D}"] \arrow[r,equal,"{}"{name=A}]\& R(D)\arrow[ld,equal,shorten <=3.7ex, shorten >= 3.7ex]\arrow[d,"{\alpha_D}"] \\
		F(C)\arrow[rr,bend right=35,"{F(f)}"',""{name=C}] \arrow[r,"{F(f)}"'] \& F(D) \arrow[r,"{}"{name=D}, equal] \arrow[to=C,Rightarrow,""', equal ,shorten <=1.5ex, shorten >= 1.5ex] \& F(D)
	\end{cdsN}
	\vspace*{-2ex}
\end{eqD*}
\begin{eqD*}\vspace*{-2ex}
	\begin{cdsN}{3}{3}
		\hphantom{.}\arrow[rd,triple,"{\chi\hspace*{0.4ex}}"']\\
		\&\hphantom{.}
	\end{cdsN}\hspace{4.5cm}
	\begin{cdsN}{3}{3}
		\&\hphantom{.}\arrow[ld,equal,"",shorten <=3.5ex, shorten >= 3.5ex]\\
		\hphantom{.}
	\end{cdsN}
\end{eqD*}
\begin{eqD*}
	\begin{cdsN}{5.5}{5}
		R(C)\arrow[rr,bend right=35,"{R(f)}"',""{name=C}]\arrow[d,"{\alpha_C}"'] \arrow[r,"{R(f)}"] \& R(D)\arrow[to=C,equal, shorten <=0.4ex, shorten >=0.5ex] \arrow[r,equal,"{}"{name=A}]\& R(D)\arrow[lld,Rightarrow,shift left=4.35ex,"{\alpha_{f}}"{pos=0.56,inner sep=0.25ex},end anchor={[xshift=-1.45ex]}, shorten <=4ex, shorten >=4ex]\arrow[d,"{\alpha_D}"] \\
		F(C)\arrow[rr,bend right=35,"{F(f)}"'] \&\& F(D)
	\end{cdsN}
\end{eqD*}

\end{samepage}
and 

\begin{samepage}
\begin{eqD*}
	\begin{cdsN}{5.5}{5}
		R(C)\arrow[d,"{\alpha_C}"']\arrow[r,bend right,"{R(\id{C})}"',""'{name=B}] \arrow[r,equal,"{}"{name=A}]  \& R(C)\arrow[ld,Rightarrow,shift left=2.1ex,"{\alpha_{\id{C}}}"{pos=0.56,inner sep=0.25ex},end anchor={[xshift=-1.85ex]}, shorten <=1.85ex, shorten >=2ex] \arrow[d,"{\alpha_C}"] \arrow[r,"{R(f)}"]\& R(D)\arrow[ld,Rightarrow,"{\alpha_f}", shorten <=1.85ex, shorten >=1.5ex] \arrow[d,"{\alpha_D}"]\\
		F(C)\arrow[rr,bend right=55,"{F(f)}"',""{name=C}] \arrow[r,"{F(\id{C})}"'] \& F(C) \arrow[to=C,Rightarrow,"{\chi_{\id{C},f}}"']\arrow[r,"F(f)"']\& F(D)
		\arrow[from=A,to=B,Rightarrow,"{\zeta}", shorten <=0.56ex, shorten >=0.28ex]
	\end{cdsN}\h[2]\aM{\gamma_C}\relax\h[2]
	\begin{cdsN}{5.5}{5}
		R(C)\arrow[d,"{\alpha_C}"'] \arrow[r,equal,"{}"{name=A}]  \& R(C)\arrow[ld,equal, shorten <=3.85ex, shorten >=3.85ex] \arrow[d,"{\alpha_C}"] \arrow[r,"{R(f)}"]\& R(D)\arrow[ld,Rightarrow,"{\alpha_f}", shorten <=1.85ex, shorten >=1.5ex] \arrow[d,"{\alpha_D}"]\\
		F(C)\arrow[r,bend right,"{F(\id{C})}"',""'{name=B}]\arrow[rr,bend right=55,"{F(f)}"',""{name=C}] \arrow[r,""{name=V}, equal] \& F(C) \arrow[to=C,Rightarrow,"{\chi_{\id{C},f}}"]\arrow[r,"F(f)"']\& F(D)
		\arrow[from=V,to=B,Rightarrow,"{\zeta}", shorten <=0.56ex, shorten >=0.28ex]
	\end{cdsN}
	\vspace*{-1.8ex}
\end{eqD*}
\begin{eqD*}\vspace*{-1.8ex}\hspace*{-0.7cm}
	\begin{cdsN}{3}{3}
		\hphantom{.}\arrow[d,triple,"{\beta_{\id{C},f}\hspace*{0.4ex}}"']\\
		\hphantom{.}
	\end{cdsN}\hspace{6.5cm}
	\begin{cdsN}{3}{3}
		\hphantom{.}\arrow[d,triple,"{\hspace*{0.4ex}\chi}"]\\
		\hphantom{.}
	\end{cdsN}
\end{eqD*}
\begin{eqD*}
	\begin{cdsN}{5.5}{5}
		R(C) \arrow[rr,"{R(f)}"', ""{name=P}, bend right=35]\arrow[d,"{\alpha_C}"']\arrow[r,bend right,"{R(\id{C})}"',""'{name=B}] \arrow[r,equal,"{}"{name=A}]  \& R(C) \arrow[to=P,"{\chi_{\id{C},f}}", Rightarrow ,shorten <=0.2ex, shorten >= 0.2ex] \arrow[r,"{R(f)}"]\& R(D)\arrow[lld,Rightarrow,shift left=4.35ex,"{\alpha_{f}}"{pos=0.56,inner sep=0.25ex},end anchor={[xshift=-1.45ex]}, shorten <=4ex, shorten >=4ex] \arrow[d,"{\alpha_D}"]\\[2ex]
		F(C) \arrow[rr,bend right=35,"{F(f)}"'] \&\& F(D)
		\arrow[from=A,to=B,Rightarrow,"{\zeta}", shorten <=0.56ex, shorten >=0.28ex]
	\end{cdsN}\h[2]\hphantom{C\aM{}}\h[2]
	\begin{cdsN}{5.5}{5}
		R(C)\arrow[d,"{\alpha_C}"'] \arrow[r,equal,"{}"{name=A}]  \& R(C)\arrow[ld,equal, shorten <=3.85ex, shorten >=3.85ex] \arrow[d,"{\alpha_C}"] \arrow[r,"{R(f)}"]\& R(D)\arrow[ld,Rightarrow,"{\alpha_f}", shorten <=1.85ex, shorten >=1.5ex] \arrow[d,"{\alpha_D}"]\\
		F(C) \arrow[rr,bend right=35,"{F(f)}"',""{name=C}] \arrow[r,""{name=V}, equal] \& F(C) \arrow[to=C,equal,"" ,shorten <=0.35ex, shorten >= 0.35ex]\arrow[r,"F(f)"']\& F(D)
	\end{cdsN}
	\vspace*{-2ex}
\end{eqD*}
\begin{eqD*}\vspace*{-2ex}
	\begin{cdsN}{3}{3}
		\hphantom{.}\arrow[rd,triple,"{\chi\hspace*{0.4ex}}"']\\
		\&\hphantom{.}
	\end{cdsN}\hspace{4.5cm}
	\begin{cdsN}{3}{3}
		\&\hphantom{.}\arrow[ld,equal,"",shorten <=3.5ex, shorten >= 3.5ex]\\
		\hphantom{.}
	\end{cdsN}
\end{eqD*}
\begin{eqD*}
	\begin{cdsN}{5.5}{5}
		R(C) \arrow[rr,"{R(f)}"', ""{name=P}, bend right=35]\arrow[d,"{\alpha_C}"'] \arrow[r,equal,"{}"{name=A}]  \& R(C) \arrow[to=P,"", equal ,shorten <=1.7ex, shorten >=1.7ex] \arrow[r,"{R(f)}"]\& R(D)\arrow[lld,Rightarrow,shift left=4.35ex,"{\alpha_{f}}"{pos=0.56,inner sep=0.25ex},end anchor={[xshift=-1.45ex]}, shorten <=4ex, shorten >=4ex] \arrow[d,"{\alpha_D}"]\\[2ex]
		F(C) \arrow[rr,bend right=35,"{F(f)}"'] \&\& F(D).
	\end{cdsN}
\end{eqD*}
\end{samepage}

\vspace{1.5mm}
(where the names of the modifications of type $\beta$ and $\gamma$ are given without taking into account the relative whiskerings and the modifications named $\chi$ are relative to the functoriality of $\chi$).

	\end{esempio}

\begin{esempio}[trimodification] \label{ourtrimod}
	Let $R, F\:  \K\op \to \Bicat$ be trihomomorphisms and let $\theta,\phi\: R \Rightarrow F$ be tritransformations. A trimodification $m\: \theta \aM{} \phi$ is given by the following data:
	\begin{itemize}
		\item given $D\in \K$ a pseudonatural transformation $m_D\: \theta_D \aR{} \phi_D$;
		\item given $D,E\in \K$ a modification 
		
\begin{eqD*}
	\hspace*{-3ex}
	\scalebox{0.8}{
		\begin{cdsN}{6}{5}
			{\K(E,D)}  \arrow[r,"{R}"] \arrow[d,"{F}"]\& {\Bicat(R(D), R(E))} \arrow[d,"{\Bicat(1, \theta_E)}"] \arrow[ld,"{\theta}", Rightarrow ,shorten <=4.5ex, shorten >= 4.5ex] \\
			{\Bicat(F(D), F(E))} \arrow[r,"{\Bicat(\theta_D,1)}", ""{name=I}]  \arrow[r,"{\Bicat(\phi_D,1)}"',""{name=L}, bend right=68] \arrow[from=I, to=L,"{(m_D)\st}", Rightarrow ,shorten <=1.5ex, shorten >= 1.5ex]\& {\Bicat(R(D), F(E))}
		\end{cdsN}
		\begin{cdsN}{5}{4}
			{\hspace*{-1ex}}  \arrow[r,"{\wt{m}}"'{inner sep=1ex}, triple]\& {\hspace*{-2ex}}
		\end{cdsN}
		\begin{cdsN}{6}{5}
			{\K(E,D)}  \arrow[r,"{R}"] \arrow[d,"{F}"]\& {\Bicat(R(D), R(E))} \arrow[d,"{\Bicat(1, \theta_E)}",""{name=U}, bend left=68] \arrow[d,"{\Bicat(1, \phi_E)}"', ""{name=V}] \arrow[ld,"{\phi}", Rightarrow ,shorten <=4.5ex, shorten >= 4.5ex] \\
			{\Bicat(F(D), F(E))} \arrow[r,"{\Bicat(\phi_D,1)}"', ""{name=I}]  \arrow[from=V, to=U,"{(m_E)\st}", Leftarrow ,shorten <=0.5ex, shorten >= 1.5ex]\& {\Bicat(R(D), F(E)).}
	\end{cdsN}}
\end{eqD*}
whose component relative to every $g\: E \to D$
\begin{eqD*}
	\begin{cdN}
			{R(D)} \arrow[r,"{\theta_D}"',""{name=P}] \arrow[d,"{R(g)}"'] \arrow[r,"{\phi_D}", ""{name=O}, bend left= 60] \arrow[from=O,to=P,"{m_D}", Leftarrow ,shorten <=0.5ex, shorten >= 0.5ex]\& {F(D)}  \arrow[d,"{F(g)}"]\\
			{R(E)} \arrow[ru,"{\theta_g}"' ,shorten <=3.5ex, shorten >= 3.5ex, Rightarrow] \arrow[r,"{\theta_E}"']\& {F(E)} 
		\end{cdN}
		\begin{cdN}
			{\hphantom{.}}  \arrow[r,"{{m_g}}"'{inner sep= 1ex}, triple]\& {\hphantom{.}}
		\end{cdN}
		\begin{cdN}
			{R(D)}  \arrow[d,"{R(g)}"'] \arrow[r,"{\phi_D}", ""{name=O}] \& {F(D)}  \arrow[d,"{F(g)}"]\\
			{R(E)} \arrow[ru,"{\phi_g}"' ,shorten <=3.5ex, shorten >= 3.5ex, Rightarrow] \arrow[r,"{\theta_E}"', ""{name=R}, bend right= 60] \arrow[r,"{\phi_E}",""{name=Q}]\arrow[from=R,to=Q,"{m_E}", Rightarrow ,shorten <=0.5ex, shorten >= 0.5ex]\& {F(E)} 	\end{cdN}
\end{eqD*}
is an invertible modification.
	\end{itemize}
These data need to satisfy the following axioms.

\begin{samepage}
	Given $L\ar{g} E \ar{f} D$ in $\K$, the following equality of modifications hold
	\begin{eqD*}
	\begin{cdN}
		{R(L)} \arrow[d,"{R(h)}"'] \arrow[r,"{\theta_L}"', ""{name=W}] \arrow[r,"{\phi_L}", ""{name=E}, bend left= 60] \arrow[from=W, to=E,"{m_L}", Rightarrow ,shorten <=0.5ex, shorten >= 0.5ex] \& {F(L)} \arrow[dd,"{F(g\c h)}",""{name=J}, bend left =60] \arrow[d,"{F(h)}"]\\
		{R(E)} \arrow[ru,"{\theta_h}"', Rightarrow ,shorten <=2.5ex, shorten >= 2.5ex] \arrow[r,"{\theta_E}"] \arrow[d,"{R(g)}"']\& {F(E)} \arrow[to=J,"{\eta_{g,h}}", Rightarrow ,shorten <=1ex, shorten >= 1ex] \arrow[d,"{F(g)}"] \\
		{R(D)} \arrow[ru,"{\theta_g}"', Rightarrow ,shorten <=2.5ex, shorten >= 2.5ex]  \arrow[r,"{\theta_D}"'] \& {F(D)} 
	\end{cdN}\h[2]\aM{m_h}\relax\h[2]
	\begin{cdN}
		{R(L)} \arrow[d,"{R(h)}"'] \arrow[r,"{\phi_L}", ""{name=W}]  \& {F(L)} \arrow[dd,"{F(g\c h)}",""{name=J}, bend left =60] \arrow[d,"{F(h)}"]\\
		{R(E)} \arrow[r,"{\phi_E}", ""{name=E}, bend left= 60]  \arrow[ru,"{\phi_h}", shift left= 2.8ex, Rightarrow ,shorten <=3.2ex, shorten >= 2.5ex] \arrow[r,"{\theta_E}"', ""{name=S}] \arrow[from=S, to=E,"{m_E}", Rightarrow ,shorten <=0.5ex, shorten >= 0.5ex] \arrow[d,"{R(g)}"']\& {F(E)} \arrow[to=J,"{\eta_{g,h}}", Rightarrow ,shorten <=1ex, shorten >= 1ex] \arrow[d,"{F(g)}"] \\
		{R(D)} \arrow[ru,"{\theta_g}"', Rightarrow ,shorten <=2.5ex, shorten >= 2.5ex]  \arrow[r,"{\theta_D}"'] \& {F(D)} 
	\end{cdN}
	\vspace*{-1.8ex}
\end{eqD*}
\begin{eqD*}\vspace*{-1.8ex}\hspace*{-0.7cm}
	\begin{cdsN}{3}{3}
		\hphantom{.}\arrow[d,triple,"{\theta\hspace*{0.7ex}}"']\\
		\hphantom{.}
	\end{cdsN}\hspace{6.5cm}
	\begin{cdsN}{3}{3}
		\hphantom{.}\arrow[d,triple,"{\hspace*{0.4ex}m_g}"]\\
		\hphantom{.}
	\end{cdsN}
\end{eqD*}
\begin{eqD*}
	\begin{cdN}
		{R(L)}  \arrow[dd,bend left=50,"{R(g\c h)}",""{name=J}] \arrow[d,"{R(h)}"'] \arrow[r,"{\theta_L}"', ""{name=W}] \arrow[r,"{\phi_L}", ""{name=E}, bend left= 60] \arrow[from=W, to=E,"{m_L}", Rightarrow ,shorten <=0.5ex, shorten >= 0.5ex] \& {F(L)}  \arrow[dd,"{F(g\c h)}", bend left =40]\\
		{R(E)} \arrow[to=J,"{\eta_{g,h}}"{pos=0.4}, Rightarrow ,shorten <=-0.2ex, shorten >= 0.4ex] \arrow[d,"{R(g)}"'] \\
		{R(D)} \arrow[ruu,shift right=5ex,"{\theta_{g\c h}}"', Rightarrow ,shorten <=8.5ex, shorten >= 8ex] \arrow[r,"{\theta_D}"'] \& {F(D)} 
	\end{cdN}\h[2]\hphantom{C\aM{}}\h[2]
	\begin{cdN}
		{R(L)} \arrow[d,"{R(h)}"'] \arrow[r,"{\phi_L}"] \& {F(L)} \arrow[dd,"{F(g\c h)}",""{name=J}, bend left =60] \arrow[d,"{F(h)}"]\\
		{R(E)} \arrow[ru,"{\phi_h}"', Rightarrow ,shorten <=2.5ex, shorten >= 2.5ex] \arrow[r,"{\phi_E}"] \arrow[d,"{R(g)}"']\& {F(E)} \arrow[to=J,"{\eta_{g,h}}", Rightarrow ,shorten <=1ex, shorten >= 1ex] \arrow[d,"{F(g)}"] \\
		{R(D)}  \arrow[r,"{\theta_D}"', ""{name=E}, bend right= 60]  \arrow[ru,"{\phi_g}"', Rightarrow ,shorten <=2.5ex, shorten >= 2.5ex]  \arrow[r,"{\phi_D}", ""{name=W}] \arrow[from=W, to=E,"{m_D}", Leftarrow ,shorten <=0.5ex, shorten >= 0.5ex] \& {F(D)} 
	\end{cdN}
	\vspace*{-2ex}
\end{eqD*}
\begin{eqD*}\vspace*{-2ex}\hspace*{-0.7cm}
	\begin{cdsN}{3}{3}
		\hphantom{.}\arrow[rd,triple,"{m_{g\c h}\hspace*{0.4ex}}"']\\
		\&\hphantom{.}
	\end{cdsN}\hspace{4cm}
	\begin{cdsN}{3}{3}
		\&\hphantom{.}\arrow[ld,triple,"{\phi\hspace*{0.4ex}}"]\\
		\hphantom{.}
	\end{cdsN}
\end{eqD*}
\begin{eqD*}
	\begin{cdN}
		{R(L)}  \arrow[dd,bend left=50,"{R(g\c h)}",""{name=J}] \arrow[d,"{R(h)}"'] \arrow[r,"{\phi_L}"', ""{name=W}]  \& {F(L)} \arrow[dd,"{F(g\c h)}", bend left =40] \\
		{R(E)} \arrow[to=J,"{\eta_{g,h}}"{pos=0.4}, Rightarrow ,shorten <=-0.2ex, shorten >= 0.4ex] \arrow[d,"{R(g)}"'] \\
		{R(D)} \arrow[ruu,shift right=5ex,"{\phi_{g\c h}}"', Rightarrow ,shorten <=8.5ex, shorten >= 8ex] \arrow[r,"{\phi_D}", ""{name=A}] \arrow[r,"{\theta_D}"', ""{name=E}, bend right= 60] \arrow[from=A, to=E,"{m_D}", Leftarrow ,shorten <=0.5ex, shorten >= 0.5ex] \& {F(D)} 
	\end{cdN}
\end{eqD*}
\end{samepage}
(where the names of the modifications of type $m$ are given without taking into account the relative whiskerings and the modifications named $\phi$ and $\theta$ are relative to the  of $\phi$ and $\theta$).

Given $D\in \K$, the following equalities of modifications hold:

\begin{eqD*}
	\begin{cdN}
		{R(D)} \arrow[r,"{\theta_D}"',""{name=V}] \arrow[r,"{\phi_D}",""{name=W}, bend left= 60] \arrow[from=V,to=W,"{m_D}", Rightarrow ,shorten <=0.5ex, shorten >= 0.5ex] \arrow[d,"{R(\id{D})}",""{name=D}] \arrow[d,"{}", equal, bend right=60,""{name=C}] \arrow[from=C, to=D,Rightarrow,"{\zeta}",shorten <=0.5ex, shorten >= 1ex]\& {F(D)} \arrow[d,"{F(\id{D})}"] \\
		{R(D)} \arrow[ru,"{\theta_{\id{D}}}"', Rightarrow,shorten <=2.5ex, shorten >= 2.5ex] \arrow[r,"{\theta_D}"'] \& {F(D)}
	\end{cdN}\h[2]\aM{m_{\id{D}}}\relax\h[2]
	\begin{cdN}
		{R(D)} \arrow[r,"{\phi_D}"]  \arrow[d,"{R(\id{D})}",""{name=D}] \arrow[d,"{}", equal, bend right=60,""{name=C}] \arrow[from=C, to=D,"{\zeta}",Rightarrow,shorten <=0.5ex, shorten >= 1ex]\& {F(D)} \arrow[d,"{F(\id{D})}"] \\
		{R(D)} \arrow[ru,"{\phi_{\id{D}}}"', Rightarrow,shorten <=2.5ex, shorten >= 2.5ex] \arrow[r,"{\theta_D}"',""{name=W}, bend right=60] \arrow[r,"{\phi_D}", ""{name=V}] \arrow[from=W,to=V,"{m_D}"', Rightarrow ,shorten <=0.5ex, shorten >= 0.5ex]\& {F(D)}
	\end{cdN}
	\vspace*{-1.8ex}
\end{eqD*}
\begin{eqD*}\vspace*{-1.8ex}\hspace*{-0.7cm}
	\begin{cdsN}{3}{3}
		\hphantom{.}\arrow[d,triple,"{\theta\hspace*{0.7ex}}"']\\
		\hphantom{.}
	\end{cdsN}\hspace{6cm}
	\begin{cdsN}{3}{3}
		\hphantom{.}\arrow[d,triple,"{\hspace*{0.7ex}\phi}"]\\
		\hphantom{.}
	\end{cdsN}
\end{eqD*}
\begin{eqD*}
	\begin{cdN}
		{R(D)} \arrow[r,"{\theta_D}"',""{name=V}] \arrow[r,"{\phi_D}",""{name=W}, bend left= 60] \arrow[from=V,to=W,"{m_D}", Rightarrow ,shorten <=0.5ex, shorten >= 0.5ex] \arrow[d,equal,"",""{name=D}]  \& {F(D)} \arrow[d,"{F(\id{D})}", bend left=60, ""{name=X}]  \arrow[d,"{}"{name=Z}, equal]\\
		{R(D)} \arrow[ru,equal,""',shorten <=5ex, shorten >= 5ex] \arrow[r,"{\theta_D}"'] \arrow[from=Z, to=X,"{\zeta}",Rightarrow,shorten <=0.5ex, shorten >= 1ex]\& {F(D)}
	\end{cdN}\h[2]\aM[]\relax\h[2]
	\begin{cdN}
		{R(D)} \arrow[r,"{\phi_D}"]  \arrow[d,equal,""{name=D}] \& {F(D)} \arrow[d,"{F(\id{D})}",""{name=T}, bend left=60]  \arrow[d,"{}"{name=S}, equal]\\
		{R(D)} \arrow[ru,equal,""',shorten <=5ex, shorten >= 5ex] \arrow[r,"{\theta_D}"',""{name=W}, bend right=60] \arrow[r,"{\phi_D}", ""{name=V}] \arrow[from=S, to=T,"{\zeta}",Rightarrow,shorten <=0.5ex, shorten >= 1ex] \arrow[from=W,to=V,"{m_D}"', Rightarrow ,shorten <=0.5ex, shorten >= 0.5ex]\& {F(D)}
	\end{cdN}
\end{eqD*}

(where the names of the modifications of type $m$ are given without taking into account the relative whiskerings and the modifications named $\phi$ and $\theta$ are relative to the  of $\phi$ and $\theta$).

\end{esempio}

\begin{esempio}[perturbation] \label{ourpert}
	Let $R, F\:  \K\op \to \Bicat$ be trihomomorphisms, let $\theta,\phi\: R \Rightarrow F$ be tritransformations and let $m,n\: \theta \aM{} \phi$ be trimodifications. A perturbation $p\: m \aP{} n$ is given by a family of modifications $p_D\: m_D \aM{} n_D$ indexed by the objects of $\K$ such that the following equality of modifications hold for every morphism $g\: E \to D$ in $R(D)$:
	
	\begin{samepage}
	\begin{eqD*}
		\begin{cdN}
			{R(D)} \arrow[r,"{\theta_D}"',""{name=P}] \arrow[d,"{R(g)}"'] \arrow[r,"{\phi_D}", ""{name=O}, bend left= 60] \arrow[from=O,to=P,"{m_D}", Leftarrow ,shorten <=0.5ex, shorten >= 0.5ex]\& {F(D)}  \arrow[d,"{F(g)}"]\\
			{R(E)} \arrow[ru,"{\theta_g}"' ,shorten <=3.5ex, shorten >= 3.5ex, Rightarrow]  \arrow[r,"{\theta_E}"']\& {F(E)} 
		\end{cdN}
		\begin{cdN}
			{\hphantom{.}}  \arrow[r,"{{m_g}}"'{inner sep= 1ex}, triple]\& {\hphantom{.}}
		\end{cdN}
		\begin{cdN}
			{R(D)}  \arrow[d,"{R(g)}"'] \arrow[r,"{\phi_D}", ""{name=O}] \& {F(D)}  \arrow[d,"{F(g)}"]\\
			{R(E)} \arrow[ru,"{\phi_g}"' ,shorten <=3.5ex, shorten >= 3.5ex, Rightarrow]  \arrow[r,"{\theta_E}"', ""{name=R}, bend right= 60] \arrow[r,"{\phi_E}",""{name=Q}]\arrow[from=R,to=Q,"{m_E}", Rightarrow ,shorten <=0.5ex, shorten >= 0.5ex]\& {F(E)} 	\end{cdN}
		\vspace*{-1.8ex}
	\end{eqD*}
	\begin{eqD*}\vspace*{-1.8ex}\hspace*{-0.7cm}
		\begin{cdsN}{3}{3}
			\hphantom{.}\arrow[d,triple,"{p_D\hspace*{0.7ex}}"']\\
			\hphantom{.}
		\end{cdsN}\hspace{5.4cm}
		\begin{cdsN}{3}{3}
			\hphantom{.}\arrow[d,triple,"{\hspace*{0.4ex}p_E}"]\\
			\hphantom{.}
		\end{cdsN}
	\end{eqD*}
	\begin{eqD*}
		\begin{cdN}
			{R(D)} \arrow[r,"{\theta_D}"',""{name=P}] \arrow[d,"{R(g)}"'] \arrow[r,"{\phi_D}", ""{name=O}, bend left= 60] \arrow[from=O,to=P,"{n_D}", Leftarrow ,shorten <=0.5ex, shorten >= 0.5ex]\& {F(D)}  \arrow[d,"{F(g)}"]\\
			{R(E)} \arrow[ru,"{\theta_g}"' ,shorten <=3.5ex, shorten >= 3.5ex, Rightarrow]  \arrow[r,"{\theta_E}"']\& {F(E)} 
		\end{cdN}
		\begin{cdN}
			{\hphantom{.}}  \arrow[r,"{{n_g}}"'{inner sep= 1ex}, triple]\& {\hphantom{.}}
		\end{cdN}
		\begin{cdN}
			{R(D)}  \arrow[d,"{R(g)}"'] \arrow[r,"{\phi_D}", ""{name=O}] \& {F(D)}  \arrow[d,"{F(g)}"]\\
			{R(E)} \arrow[ru,"{\phi_g}"' ,shorten <=3.5ex, shorten >= 3.5ex, Rightarrow]  \arrow[r,"{\theta_E}"', ""{name=R}, bend right= 60] \arrow[r,"{\phi_E}",""{name=Q}]\arrow[from=R,to=Q,"{n_E}", Rightarrow ,shorten <=0.5ex, shorten >= 0.5ex]\& {F(E)} 	\end{cdN}
	\end{eqD*}
\end{samepage}
\end{esempio}

We present the tricategorical version of the Yoneda lemma that has been proven by Buhn\'e's in their PhD thesis \cite{Buhneithesis}. This result will be useful in section \ref{sec2stacks}.
\begin{teor}[Tricategorical Yoneda lemma, \cite{Buhneithesis}]\label{Yonedatricat}
	Let $T$ be a tricategory and let $F\: T\op \to \Bicat$ be a trihomomorphism. For every $C\in T$ the evaluation  of the identity at $C$ induces a biequivalence 
	\vspace{-1.5mm}
	$$ \Tricat(T\op, \Bicat)(y(C), F) \simeq F(C)$$
	which is natural in $C$. 
\end{teor}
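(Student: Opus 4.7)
The plan is to exhibit an explicit pseudo-inverse to the evaluation pseudofunctor
\[
\mathrm{ev}_C \: \Tricat(T\op,\Bicat)(y(C),F) \longrightarrow F(C), \qquad \alpha \longmapsto \alpha_C(\id{C}),
\]
with the evident action on trimodifications and perturbations via evaluation at $\id{C}$, and then to verify that $\mathrm{ev}_C$ satisfies the three conditions that characterize a biequivalence of bicategories: surjectivity on equivalence classes of objects, essential surjectivity on hom-categories, and full faithfulness on 2-cells. Given $x\in F(C)$, I would construct a tritransformation $\widehat{x}\: y(C) \Rightarrow F$ whose component pseudofunctor at $D\in T$ is
\[
\widehat{x}_D \: T(D,C) \longrightarrow F(D), \qquad f \longmapsto F(f)(x),
\]
with action on 2- and 3-cells induced by $F$ evaluated at $x$. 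The adjoint equivalence data $\widetilde{\widehat{x}}$ required by Example \ref{ourtritrans} come from the coherence $\chi$ of $F$ evaluated at $x$, and the invertible modifications $\beta_{\widehat{x}}$ and $\gamma_{\widehat{x}}$ come from the modifications $\omega$, $\gamma$ and $\delta$ of Definition \ref{trihomomorphism} evaluated at $x$; the axioms of Example \ref{ourtritrans} then reduce to the coherence axioms satisfied by the trihomomorphism $F$.

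For essential surjectivity on objects, $\mathrm{ev}_C(\widehat{x}) = F(\id{C})(x)$ is equivalent to $x$ via the unit coherence $\iota_C$ of $F$. Given an arbitrary tritransformation $\alpha$, the coherence components $(\alpha_f)_{\id{C}}$ produce equivalences $\alpha_D(f) \simeq F(f)(\alpha_C(\id{C}))$ in $F(D)$ which assemble into an equivalence $\alpha \simeq \widehat{\mathrm{ev}_C(\alpha)}$, with the required trimodification axioms following from the $\beta$-- and $\gamma$--axioms of $\alpha$ in Example \ref{ourtritrans}. For essential surjectivity on morphisms, a 1-cell $m\: x \to x'$ of $F(C)$ yields a trimodification $\widehat{m}\: \widehat{x}\aM{}\widehat{x'}$ with $\widehat{m}_D(f) \deq F(f)(m)$; conversely, a trimodification $n\: \alpha \aM{} \beta$ is exhibited as equivalent to $\widehat{n_C(\id{C})}$ by transport along $(\alpha_f)_{\id{C}}$ and $(\beta_f)_{\id{C}}$. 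Full faithfulness on 2-cells amounts to the statement that a perturbation $p\: m \aP{} n$ is uniquely determined by its value $p_C(\id{C})$: the perturbation axiom of Example \ref{ourpert}, applied to $f\: D\to C$, forces $p_D(f)$ to be the transport of $p_C(\id{C})$ along the coherence modifications of $m$ and $n$, and a direct check shows that any such family is indeed a perturbation.

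Naturality in $C$ is then formal: for $h\: C\to C'$ the square asserting that precomposition with $y(h)$ intertwines $\mathrm{ev}_{C'}$ and $\mathrm{ev}_C$ via $F(h)$ commutes up to coherent equivalence, obtained from $y(h)(\id{C}) = h$ together with the same coherence data that define $\widehat{x}$. The main obstacle I anticipate is the bookkeeping: defining the modifications $\beta_{\widehat{x}}$, $\gamma_{\widehat{x}}$ of $\widehat{x}$, the coherence modification $\widetilde{\widehat{m}}$ of $\widehat{m}$, and verifying that all the lengthy axioms of Examples \ref{ourtritrans} and \ref{ourtrimod} hold simultaneously. This is essentially a diagrammatic verification unwinding the tricategory axioms $\pi, \mu, \lambda, \rho$ and the interchange modifications $\omega, \gamma, \delta$ of $F$, but the sheer size of the pasting diagrams, already visible in the presentation of Examples \ref{ourtritrans}--\ref{ourpert}, makes it the most delicate part of the argument; this is also the reason why it is convenient here to simply cite Buhn\'e's verification in \cite{Buhneithesis}.
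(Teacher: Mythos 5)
The paper does not prove Theorem~\ref{Yonedatricat} at all: it states it as a black box imported from Buhn\'e's thesis \cite{Buhneithesis}, and only records the explicit action of the biequivalence in Remark~\ref{assYonedatricat}. Your outline is the standard argument and is consistent with that remark --- your pseudo-inverse $x\mapsto\widehat{x}$ is exactly the assignment $X\mapsto\sigma^X$, $a\mapsto m^a$, $\alpha\mapsto p^{\alpha}$ described there, and the reduction of the biequivalence to surjectivity on equivalence classes of objects, essential surjectivity on morphisms, and full faithfulness on 2-cells is the same decomposition the paper later exploits via Proposition~\ref{charbieq}. Since the genuinely hard content (verifying the tritransformation, trimodification and perturbation axioms for $\widehat{x}$, $\widehat{m}$, and the comparison cells) is in your proposal, as in the paper, ultimately deferred to \cite{Buhneithesis}, there is nothing to fault beyond noting that the sketch is not itself a self-contained proof.
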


We now recall the explicit action of the biequivalence of Theorem \ref{Yonedatricat}. 

\begin{oss}\label{assYonedatricat}
	The action of the biequivalence given by the Tricategorical Yoneda lemma is the following:
	\begin{itemize}
		\item an object $X\in F(C)$ corresponds to a tritransformation $\sigma^X\: y(C) \Rightarrow F$, whose component $(\sigma^X)_D$ on $D\in \K$ is a pseudofunctor sending $E\ar{f}D$ in $y(D)$ to $F(f)(X)$;
		\item a morphism $a\: X\to Y$ corresponds to a trimodification $m^a\: \sigma^X \aM{} \sigma^Y$ whose component $(m^a)_D$ on $D\in \K$ is a pseudonatural transformation that has component $((m^a)_D)_f$  relative to $E\ar{f}D$ equal to $F(f)(a)$;
		\item a 2-cell $\alpha\: a\Rightarrow b$ in $F(C)$ corresponds to a perturbation $p^{\alpha}\: m^a \aP{} m^b$ whose component $(p^{\alpha})_D$ on $D\in K$ is a modification that has component $((p^{\alpha})_D)_f$ relative to $E\ar{f}D$ equal to $F(f)(\alpha)$.
	\end{itemize}
	\end{oss}
\section{Bisites} \label{secbisites}

Let $\K$ be a small (strict) 2-category.  

\begin{oss}
	The definitions of this paper could be given more in general for a bicategory and all the results proven in this more general context. The decision to do everything in the less general context of 2-categories simplifies the calculations involved. One should also keep in mind that every bicategory is biequivalent to a 2-category. Moreover, every bicategory with finite bilimits is biequivalent to a 2-category with finite flexible limits. This result has been proven by Power in \cite{Powercohresult}.
\end{oss}

We recall the notion of bi-iso-comma object (called bipullbacks in \cite{Streetcharbicatstacks}) that is a bicategorical analogue of the notion of pullback.

\begin{defne}
	Let $f\: A \to C$ and $g\: B\to C$ be morphisms in $\K$. The \dfn{bi-iso-comma object} of $f$ and $g$ is an object $A\xp{C}{f,g}B$ of $\K$ (sometimes simply denoted by $A\xp{C}{}B$) equipped with morphisms $f\st g\: A\xp{C}{f,g}B \to A$ and $g\st f\: A\xp{C}{f,g}B \to B$ and an invertible 2-cell
	\biisocomma{A\xp{C}{f,g}B }{A}{B}{C}{f\st g}{g\st f}{f}{g}{\lambda^{f,g}}{3}{3}
	such that the following universal properties are satisfied:
	\begin{itemize}
		\item given an object $D\in K$, morphisms $v\: D\to A$ and $w\: D\to B$ and an isomorphic 2-cell 
		\biisocomma{D}{A}{B}{C}{v}{w}{f}{g}{\delta}{3}{3}
		there exists a morphism $u\: D \to A\xp{C}{f,g}B$ together with 2-cells $\delta_1$ and $\delta_2$ such that the following equality holds
			\begin{eqD*}
			\bicommaunivvN{D}{v}{w}{u}{A\xp{C}{f,g}B}{A}{B}{C}{f\st g}{g\st f}{f}{g}{\lambda^{f,g}}{\delta_1}{\delta_2}
			\qquad = \qquad 
			\csq*[l][7][7][\delta]{D}{A}{B}{C}{v}{w}{f}{g}
		\end{eqD*}
	\item given morphisms $u,t\: D\to A\xp{C}{f,g}B$ and isomorphic 2-cells
	\begin{eqD*}
		\csq*[l][7][7][\alpha]{D}{A\xp{C}{f,g}B}{A\xp{C}{f,g}B}{A}{u}{f\st g}{t}{f\st g}
		\qquad \text{and} \qquad 
		\csq*[l][7][7][\beta]{D}{A\xp{C}{f,g}B}{A\xp{C}{f,g}B}{B}{u}{g\st f}{t}{g\st f}
	\end{eqD*}
such that 
\begin{eqD*}
	\begin{cdN}
		{D} \arrow[r,"{u}"] \arrow[d,"{t}"'] \& {A\xp{C}{f,g}B} \arrow[ld,"{\alpha}",Rightarrow ,shorten <=2.5ex, shorten >= 2.5ex ] \arrow[d,"{f\st g}"] \\
		{A\xp{C}{f,g}B} \arrow[r,"{f\st g}"] \arrow[d,"{g\st f}"']\& {A} \arrow[d,"{f}"] \arrow[ld,"{\lambda^{f,g}}",Rightarrow ,shorten <=3.5ex, shorten >= 3.5ex ] \\
		{B} \arrow[r,"{g}"'] \& {C} 
	\end{cdN}
	\h[3]= \h[3]
	\begin{cdN}
		{D} \arrow[r,"{u}"] \arrow[d,"{t}"']\& {A\xp{C}{f,g}B} \arrow[ld,"{\beta}",Rightarrow ,shorten <=2.5ex, shorten >= 2.5ex ] \arrow[r,"{f\st g}"] \arrow[d,"{g\st f}"]\& {A} \arrow[ld,"{\lambda^{f,g}}",Rightarrow ,shorten <=3.5ex, shorten >= 3.5ex ] \arrow[d,"{f}"] \\
		{A\xp{C}{f,g}B} \arrow[r,"{g\st f}"']\& {B} \arrow[r,"{g}"']\& {C}
	\end{cdN}
\end{eqD*}
there exists a unique 2-cell 
$$\Gamma\: u \Rightarrow t$$
such that 
$$f\st g \star \Gamma=\alpha \qquad \text{and} \qquad g\st f \star \Gamma = \beta$$
	\end{itemize}
	\end{defne}

In what follows, we will also need to consider a stricter version of the bi-iso-comma object that is called \predfn{iso-comma object}.

\begin{defne}
	Let $f\: A \to C$ and $g\: B\to C$ be morphisms in $\K$. The \dfn{iso-comma object} of $f$ and $g$ is an object $A\xp{C}{f,g}B$ of $\K$ (sometimes simply denoted by $A\xp{C}{}B$) equipped with morphisms $f\st g\: A\xp{C}{f,g}B \to A$ and $g\st f\: A\xp{C}{f,g}B \to B$ and an invertible 2-cell
	\biisocomma{A\xp{C}{f,g}B }{A}{B}{C}{f\st g}{g\st f}{f}{g}{\lambda^{f,g}}{3}{3}
	such that the following universal properties are satisfied:
	\begin{itemize}
		\item given an object $D\in K$, morphisms $v\: D\to A$ and $w\: D\to B$ and an isomorphic 2-cell 
		\biisocomma{D}{A}{B}{C}{v}{w}{f}{g}{\delta}{3}{3}
		there exists a unique morphism $u\: D \to A\xp{C}{f,g}B$ such that the following equality holds
		\begin{eqD*}
			\commaunivvN{D}{v}{w}{u}{A\xp{C}{f,g}B}{A}{B}{C}{f\st g}{g\st f}{f}{g}{\lambda^{f,g}}
			\qquad = \qquad 
			\csq*[l][7][7][\delta]{D}{A}{B}{C}{v}{w}{f}{g}
		\end{eqD*}
		\item given morphisms $u,t\: D\to A\xp{C}{f,g}B$ and isomorphic 2-cells
		\begin{eqD*}
			\csq*[l][7][7][\alpha]{D}{A\xp{C}{f,g}B}{A\xp{C}{f,g}B}{A}{u}{f\st g}{t}{f\st g}
			\qquad \text{and} \qquad 
			\csq*[l][7][7][\beta]{D}{A\xp{C}{f,g}B}{A\xp{C}{f,g}B}{B}{u}{g\st f}{t}{g\st f}
		\end{eqD*}
		such that 
		\begin{eqD*}
			\begin{cdN}
				{D} \arrow[r,"{u}"] \arrow[d,"{t}"'] \& {A\xp{C}{f,g}B} \arrow[ld,"{\alpha}",Rightarrow ,shorten <=2.5ex, shorten >= 2.5ex ] \arrow[d,"{f\st g}"] \\
				{A\xp{C}{f,g}B} \arrow[r,"{f\st g}"] \arrow[d,"{g\st f}"']\& {A} \arrow[d,"{f}"] \arrow[ld,"{\lambda^{f,g}}",Rightarrow ,shorten <=3.5ex, shorten >= 3.5ex ] \\
				{B} \arrow[r,"{g}"'] \& {C} 
			\end{cdN}
			\h[3]= \h[3]
			\begin{cdN}
				{D} \arrow[r,"{u}"] \arrow[d,"{t}"']\& {A\xp{C}{f,g}B} \arrow[ld,"{\beta}",Rightarrow ,shorten <=2.5ex, shorten >= 2.5ex ] \arrow[r,"{f\st g}"] \arrow[d,"{g\st f}"]\& {A} \arrow[ld,"{\lambda^{f,g}}",Rightarrow ,shorten <=3.5ex, shorten >= 3.5ex ] \arrow[d,"{f}"] \\
				{A\xp{C}{f,g}B} \arrow[r,"{g\st f}"']\& {B} \arrow[r,"{g}"']\& {C}
			\end{cdN}
		\end{eqD*}
		there exists a unique 2-cell 
		$$\Gamma\: u \Rightarrow t$$
		such that 
		$$f\st g \star \Gamma=\alpha \qquad \text{and} \qquad g\st f \star \Gamma = \beta$$
	\end{itemize}
\end{defne}

\begin{oss}
	We notice that the iso-comma-object, if it exists, is a particular representative of the bi-iso-comma object. In fact, the universal properties of the iso-comma object imply those of the bi-iso-comma object.
\end{oss}

We want to endow $\K$ with an appropriate two-dimensional Grothendieck topology. We could simply consider a Grothendieck topology on the underlying $1$-category of $\K$. This choice is quite common in the literature, but it is not the right choice for us since we will need a topology that really takes into account the two-dimensional nature of $\K$. For this reason, we endow $\K$ with a bitopology in the sense of Street. This notion has been introduced in greater generality for a bicategory in \cite{Streetcharbicatstacks}.

We start by introducing the notion of sieve in this two-categorical context. 

\begin{defne}[\cite{Streetcharbicatstacks}]
	A \dfn{bisieve} $S$ over $C\in\K$ is a fully faithful morphism \linebreak[4] ${S\: R\Rightarrow y(C)}$ in $\Bicat(\K\op, \Cat)$.
\end{defne}

We now make some useful remarks about the previous definition.

\begin{oss}
	Notice that a bisieve is not required to be injective on objects. But every bisieve is equivalent to one that is injective on objects. Since all the notions that we will use are invariant under equivalence of bisieves, we will only consider bisieves that are injective on objects. This will allow us to think of a bisieve as a collection of morphisms in $\K$ with a common target and 2-cells between them. 
\end{oss}

\begin{oss}
	We observe that, since a bisieve is required to be full, every 2-cell between morphisms of the bisieve is in it. 
\end{oss}
\begin{oss}[normal bisieves]
We will always consider bisieves whose source strictly preserves identities. We call \dfn{normal bisieves} the ones of this form. This choice is not restrictive as every bisieve is equivalent to a normal one.
\end{oss}
\begin{oss}
	Notice that, since the transformation $S$ is just pseudonatural, the bisieve is required to be closed under precomposition only up to isomorphism. 
\end{oss}

\begin{notazione}
	Given a morphism $f\: D\to C$ in $S$ and a morphism $g\: E \to D$ in $\K$, we will denote by $\wt{f\c g}$ the morphism which is isomorphic to $f\c g$ and is in $S$. Notice that this coincides with $R(g)(f)$. Moreover, we will call $\sigma_{f,g}$ the isomorphism $(S_g)_f$ between $\wt{f\c g}$ and $f\c g$ given by the pseudonaturality of $S$.
\end{notazione}

The following construction gives an alternative way to encode the data of a bisieve that will be useful for us. This is probably well-known, but it does not seem to appear in the literature. The construction will use the 2-category of elements construction introduced by Street in \cite{Streetlimits}. This is a 2-categorical generalization of the usual category of elements construction (also called Grothendieck construction).

\begin{costr}\label{bisieveGroth}
Let $S\: R \Rightarrow y(C)$ be a bisieve over $C\in \K$.  Consider the \linebreak 2-category of elements $\Groth{R}$  of the pseudofunctor $R\: \K\op \to \Cat$. Then $\Groth{R}$  is a 2-category that has:
\begin{itemize}
	\item[-] as objects the pairs $(D, D\ar{f} C)$, where $D\in \K$ and $f\in R(D)$;
	\item[-] as morphisms from $(E, E\ar{h} C)$ to $(D, D\ar{f} C)$ the pairs $(g, \alpha)$ with $g\: E\to D$ is a morphism in $\K$ and $\alpha\: h \Rightarrow \wt{f\c g}$ is a 2-cell in $\K$;
	\item[-] as 2-cells from $(g, \alpha)\: (E, h) \to (D,f)$ to $(g', \beta)\: (E, h) \to (D,f)$ the 2-cells $\delta\: g \Rightarrow g'$ in $\K$ such that 
	\begin{eqD*}
		\begin{cdsN}{12}{12}
			{E} \arrow[r,"{h}", ""'{name=A}, bend left=60] \arrow[r,"{\wt{f\c g}}"{pos=0.3},""{name=B},""'{name=D}] \arrow[r,"{\wt{f\c g'}}"',""{name=C}, bend right=60] \arrow[from=A, to=B,"{\alpha}", Rightarrow] \arrow[from=D, to=C,"{R(\delta)_{f}}", Rightarrow] \& {C}
		\end{cdsN}
		\h = \h
		\begin{cdsN}{12}{12}
			{E} \arrow[r,"{h}", ""'{name=A}, bend left=60]  \arrow[r,"{\wt{f\c g'}}"',""{name=C}, bend right=60] \arrow[from=A, to=C,"{\beta}", Rightarrow ,shorten <=1.5ex, shorten >= 1.5ex] \& {C.}
		\end{cdsN}
	\end{eqD*}
\end{itemize}
The 2-category $\Groth{R}$ encodes the data of the bisieve $S$. Indeed, the objects of $\Groth{R}$  correspond to the morphisms in $S$ and the 2-cells correspond to the 2-cells of $S$. The morphisms of $\Groth{R}$ allows one to relate morphisms of $S$ with different sources and encode the pseudonaturality of $S$.
\end{costr}

\begin{oss}\label{splitmorGr}
	Given a morphism $(g, \alpha)\: (E, h) \to (D,f)$  in $\Groth{R}$ we can write it as the composite of $(\id{E}, \alpha) \: (E, h) \to (E, \wt{f\c g})$ and $(g, \id{\wt{f\c g}})\: (E, \wt{f\c g}) \to (D,f)$. 
\end{oss}

We are now ready to recall the notion of  bitopology.

\begin{defne}[\cite{Streetcharbicatstacks}] \label{bitopology}
	A \dfn{Grothendieck bitopology} $\tau$ on $\K$ is an assignment for each object $C\in \K$ of a collection $\tau(C)$ of bisieves on $C$, called \dfn{covering bisieves}, in a way such that
	\begin{itemize}
		\item[(T1)] the identity of $y(C)$  is in $\tau(C)$;
		\item[(T2)] for all $S\: R\Rightarrow y(C)$ in $\tau(\cY)$ and all arrows $f\: D\to C$ in $\K$, the bi-iso-comma object 
		\vspace{-2.5mm}
		{\biisocomma{ P}{y(D)}{R}{y(C)}{}{}{-\c f}{S}{}{2}{2}}
		has the top arrow in $\tau(D)$;
		
		\item[(T3)] if $S'\: R'\Rightarrow y(C)$ is in $\tau(C)$ and $S\: R\Rightarrow y(C)$ is a bisieve such that for every $f\: D \to C$ in $R'(D)$ there exists a bi-iso-comma object 
		\vspace{-2.5mm}
		{\biisocomma{ P}{y(D)}{R}{y(C)}{}{}{-\c f}{S}{}{2}{2}}
		such that the top arrow is in $\tau(D)$ then $S$ is equivalent to a covering bisieve in $\tau(C)$.
	\end{itemize}
	The pair $(\K, \tau)$ with $\tau$ a Grothendieck bitopology is called \dfn{bisite}.
\end{defne}

\begin{oss}\label{bisievefstarS}
	Axiom (T2) of Definition \ref{bitopology} ensures that the collection
	$$f\st S=\{E\ar{g} D|f\c g \text{ is isomorphic to a morphism in } S\}$$
	is a covering bisieve.  Given a morphism $E\ar{g} D$ in $f\st S$, we will denote by $\overline{f\c g}$ the morphism isomorphic to $f\c g$ that is in $S$.
	
	\noindent Notice that every morphism $E\ar{g} D\in f\st S$ is isomorphic to a composite of the form 
	$$E \ar{m_g} E\x[C] D \ar{f\st ({\wt{f\c g}})} D,$$
	where $m_g$ is induced by the universal property of the bi-iso-comma object $E\x[C] D$ as in the following diagram
	\begin{eqD*}
		\bicommaunivvN{E}{\id{E}}{g}{m_g}{E\x[C] D}{E}{D}{C}{(\wt{f\c g})\st f}{(\wt{f\c g})\st f}{\wt{f\c g}}{f}{\lambda^{\wt{f\c g},f}}{\sigma_1}{\sigma_2}
		\qquad = \qquad 
		\csq*[l][7][7][\sigma_{f,g}]{E}{E}{D}{C}{\id{E}}{g}{\wt{f\c g}}{f}
	\end{eqD*}
	This means that, up to isomorphism and precomposition with an appropriate morphism, the morphisms of $f\st S$ are of the form $f\st h$ with $h\in S$. This is the appropriate higher dimensional analogue of the closure under pullback of a Grothendieck topology. 
\end{oss}

\begin{oss}
	Axiom (T3) is essentially saying that being a covering bisieve can be checked locally with respect to a covering bisieve. This is the analogue of one of the axioms required for a Grothendieck topology.
\end{oss}

Among Grothendieck bitopologies we can distinguish the ones that make representables satisfy the appropriate gluing conditions. In this two-dimensional context we will ask that the representables are stacks, that are the two-dimensional analogues of sheaves. Since the usual notion of stack involves a pseudofunctor from a category to $\Cat$ and the representables have as source the 2-category $\K$, we will need to consider the following generalized notion of stack defined by Street in \cite{Streetcharbicatstacks}.

\begin{defne}[\cite{Streetcharbicatstacks}] \label{defstack2cat}
		Let $F\: \K\op \to \Cat$ be a pseudofunctor. $F$ is a \dfn{stack} if for every $C\in \K$ and every covering bisieve $S\: R\Rightarrow y(C)$ in $\tau(C)$ the functor
			$$-\c S\: \HomC{\Psm{\K\op}{\Cat}}{\HomC{\K}{-}{C}}{ F} \longrightarrow \HomC{\Psm{\K\op}{ \Cat}}{R}{ F}\vspace{-2.5mm}$$
			is an equivalence of categories.
	\end{defne}

\begin{oss}\label{charstack}
	The previous definition in the case of a pseudofunctor that has a category as source is equivalent to the usual definition of stack. The gluing conditions of stacks arise from the conditions of essential surjectivity, fullness and faithfulness of the equivalence of Definition \ref{defstack2cat}. To see this, one has to use the Yoneda lemma and unpack the data of the category $\HomC{\Psm{\K\op}{ \Cat}}{R}{ F}$. In section \ref{sec2stacks}, we will use an analogous procedure one dimension higher.
\end{oss}
\begin{defne}\label{subcanbitopology}
	A bitopology $\tau$ on $\K$ is said \dfn{subcanonical} if all the representable prestacks are stacks with respect to $\tau$.
\end{defne}

We now aim at proving that, if the topology is subcanonical, the objects of $\K$ are some kind of colimits of the covering bisieves over them. This will generalize to the two-categorical context the well-known result that the objects of a subcanonical site are colimits of the sieves over them. 

The right kind of colimits that we need to consider in order to obtain the desired result are called \predfn{sigma-bicolimits}. They have been introduced for the first time by Gray in \cite{Graybook} in a more general setting and  then studied and applied by Descotte, Dubuc and Szyld in \cite{DescotteDubucSzyld}. The work of \cite{DescotteDubucSzyld} has been inspired by Street's paper \cite{Streetlimits} in which he presents the strict case. For our purposes we will need to consider an oplax version for colimits also used by Mesiti in \cite{Mesiti2setenriched}.

	\begin{defne}[\cite{DescotteDubucSzyld}]
	Let $W\:\A\op\to \Cat$ be a pseudofunctor with $\A$ small, and consider $2$-functors $M,N\:{\left(\Groth{W}\right)}\op\to \D$. A  \dfn{sigma natural transformation} $\gamma\: M \Rightarrow N$ is an oplax natural transformation such that the structure $2$-cell on every morphism $(f,\alpha)$ in $\Groth{W}$ with $\alpha$ an isomorphism is isomorphic.
\end{defne}

\begin{oss}
	In the language of \cite{DescotteDubucSzyld}, we are taking as class $\Sigma$ of morphisms in $\Groth{W}$ the class of all morphisms of type $(f,\alpha)$ where $\alpha$ is an isomorphism. Moreover, we are considering the case in which $W$ is a pseudofunctor rather then a strict 2-functor. This is discussed in Appendix A of \cite{DescotteDubucSzyld}.
\end{oss}

\begin{defne}[\cite{DescotteDubucSzyld}]\label{defsigmabicolim}
	Let $W\:\A\op\to \Cat$ be a pseudofunctor with $\A$ small, and let $F\:\Groth{W}\to \K$ be a pseudofunctor. The \dfn{sigma-bicolimit of $F$}, denoted as $\sigmabicolim{F}$, is (if it exists) an object $C\in \K$ together with a pseudonatural equivalence of categories
	$$\HomC{\K}{C}{D}\simeq \HomC{\msigma{{\left(\Grothdiag{W}\right)}\op}{\Cat}}{\Delta 1}{\HomC{\K}{F(-)}{D}}$$
	where the right-hand side is the category of sigma-natural transformations and modifications.
	\noindent When $\sigmabicolim{F}$ exists, the identity on $C$ provides a sigma natural transformation $\mu\:\Delta 1 \Rightarrow \HomC{\K}{F(-)}{C}$
	called the \dfn{universal sigma-bicocone}.
\end{defne}

\begin{oss}[universal properties of a sigma-bicolimit]
We can extract from the equivalence of categories required by Definition \ref{defsigmabicolim} the explicit universal properties satisfied by $K=\sigmabicolim{F}$. 

The essential surjectivity means that, given a sigma-bicocone $\rho$ of shape $\Groth{W}$ over an object $U\in \K$, there exists a morphism $r\: K \to U$ together with 2-cells 
\begin{cd}
	{F(A)}  \arrow[r,"{\mu_{(A,h)}}"]  \arrow[rr,"{\rho_{(A,h)}}"', bend right=45 ,""{name=B}]\& {K} \arrow[to=B,"{\theta_{(A,h)}}"{pos=0.2},twoiso] \arrow[r,"{r}"]\& {U} 
\end{cd}
for every $(A,h)\in \Groth{W}$, such that, given a morphism $(g,\alpha)\: (A,h) \to (B,t)$ in $\Groth{W}$, the following equality holds:
\begin{eqD*}
	\begin{cdN}
		|[alias=D]|{F((A,h))}  \arrow[d,"{F((g,\alpha))}"'] \arrow[r,"{\mu_{(A,h)}}"]  \& |[alias=G]|{K}  \arrow[r,"{r}"] \& {U} \\
		{F((B,t))}  \arrow[rru,"{\rho_{(B,t)}}"', bend right=30 ,""{name=F}] \arrow[from=G,to=F,"{\theta_{(B,t)}}"{pos=0.7},twoiso ,shorten <=1.5ex, shorten >= 1.5ex] \arrow[ru,"{\mu_{(B,t)}}"'{inner sep=0.2ex},""{name=S}] \arrow[from=D,to=S,"{\mu_{(g,\alpha)}}",Rightarrow ,shorten <=0.5ex, shorten >= 0.5ex] \& {} \& {}
	\end{cdN}
	\h[3]=\h[3]
	\begin{cdN}
		{F((A,h))}  \arrow[r,"{\mu_{(A,h)}}"]  \arrow[d,"{F((g,\alpha))}"'] \arrow[rr,"{\rho_{(A,h)}}"', bend right=45 ,""{name=B},""'{name=C}]\& {K} \arrow[to=B,"{\theta_{(A,h)}}"{pos=0.2},twoiso] \arrow[r,"{r}"]\& {U} \\
		{F((B,t))} \arrow[from=C,"{\rho_{(g,\alpha)}}",Rightarrow ,shorten <=1.5ex, shorten >= 1.5ex, shift left=1.5ex] \arrow[rru,"{\rho_{(B,t)}}"', bend right=40 ,""{name=F}] \& {} \& {}
	\end{cdN}
\end{eqD*}
The fully-faithfulness means that, given morphisms $r,s\: K \to U$ and for every $(A,h)\in \Groth{W}$ a 2-cell
\begin{cd}
	{F((A,h))} \arrow[r,"{\mu_{(A,h)}}"] \arrow[rd,"{\mu_{(A,h)}}"'] \& {K} \arrow[d,"{m_{(A,h)}}"{inner sep=0.4ex,pos=0.45},Rightarrow ,shorten <=1.5ex, shorten >= 1.5ex]  \arrow[r,"{r}"]\& {U} \\
	{} \& {K} \arrow[ru,"{s}"'] \& {}
\end{cd}
such that, given a morphism $(g,\alpha)\: (A,h) \to (B,t)$ in $\Groth{W}$, the following equality holds:
\begin{eqD*}
	\begin{cdN}
		{F((A,h))} \arrow[d,"{F((g,\alpha))}"'] \arrow[r,"{\mu_{(A,h)}}"] \arrow[rd,"{\mu_{(A,h)}}"{inner sep=0.1ex} ,""{name=E}] \& {K} \arrow[d,"{m_{(A,h)}}"{inner sep=0.4ex,pos=0.45},Rightarrow ,shorten <=1.5ex, shorten >= 1.5ex]  \arrow[r,"{r}"]\& {U} \\
		{F((B,t))} \arrow[from=E,"{\mu_{(g,\alpha)}}",Rightarrow ,shorten <=1.5ex, shorten >= 1.5ex] \arrow[r,"{\mu_{(B,h)}}"'] \& {K} \arrow[ru,"{s}"'] \& {}
	\end{cdN}
	\h[3]=\h[3]
	\begin{cdN}
		|[alias=Z]|{F((A,h))} \arrow[d,"{F((g,\alpha))}"'] \arrow[r,"{\mu_{(A,h)}}"] \& {K} \arrow[d,"{m_{(B,t)}}"{pos=0.4},Rightarrow ,shorten <=1.5ex, shorten >= 1.5ex]  \arrow[r,"{r}"]\& {U} \\
		{F((B,t))} \arrow[ru,"{\mu_{(B,t)}}"'{inner sep=0.2ex},""{name=V}]  \arrow[from=Z,to=V,"{\mu_{(g,\alpha)}}",Rightarrow ,shorten <=0.5ex, shorten >= 0.5ex] \arrow[r,"{\mu_{(B,h)}}"'] \& {K} \arrow[ru,"{s}"'] \& {}
	\end{cdN}
\end{eqD*}
there exists a unique 2-cell
$$\Gamma\: r \Rightarrow s$$
such that $\Gamma \star \mu_{(A,h)} =m_{(A,h)}$ for every $(A,h)\in \Groth{W}$.
\end{oss}

We now introduce a notion of \predfn{sigma-bicolim bisieve} that will be the two-dimensional analogue of the notion of colim sieve (called effectively epimorphic sieve in \cite{Elephant}).

\begin{defne}\label{sigmabicolimsieve}
	 A bisieve $S\: R \Rightarrow y(C)$ on $C\in \K$ is called a \dfn{sigma-bicolim bisieve} if $C=\sigmabicolim{F}$, where $F\: \Groth{R}\to \K$ is the 2-functor of  projection to the first component.
\end{defne}

In order to prove that every covering bisieve of a subcanonical bitopology is a sigma-bicolim bisieve, we will need the following result of conicalization of weighted colimits. This is the restriction of a result of \cite{Streetlimits} and it was proved in \cite{DescotteDubucSzyld}. We will present an oplax version of it.

\begin{prop} [\cite{Streetlimits}, \cite{DescotteDubucSzyld}]\label{conicaliz}
	Let $W\:\A\op\to \Cat$ be a pseudofunctor with $\A$ small and let $F\: \Groth{W} \to \K$ be a 2-functor. There is an equivalence of categories
	$$\HomC{\Psm{\A}{\Cat}}{W}{U}\simeq \HomC{\msigma{(\Grothdiag{W})\op}{\Cat}}{\Delta 1}{U\c F}$$
	that is pseudonatural in $U$.
\end{prop}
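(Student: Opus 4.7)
My approach is to exhibit the claimed equivalence by constructing a mutually quasi-inverse pair of comparison functors between the two hom-categories and verifying the sigma condition and pseudonaturality in $U$ by direct manipulation. The structural insight driving the construction is that every morphism $(f,\alpha)\:(A',h')\to(A,h)$ of $\Groth{W}$ factors, up to the pseudofunctoriality coherence of $W$, as a \emph{vertical} morphism $(\id_{A'},\alpha)\:(A',h')\to(A',W(f)(h))$ living in the fibre $W(A')$ followed by a \emph{cartesian} morphism $(f,\id_{W(f)(h)})\:(A',W(f)(h))\to(A,h)$. Vertical morphisms encode the internal structure of the fibres, cartesian morphisms encode the $\A$-variance, and the sigma condition forces the structure 2-cell of a sigma-natural transformation to be invertible on precisely those morphisms whose second component is an isomorphism -- in particular, on all cartesian morphisms.

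First I would define the forward functor $\Phi$. Given a pseudonatural transformation $\eta\:W\Rightarrow U$, the sigma-natural transformation $\Phi(\eta)\:\Delta 1\Rightarrow U\c F$ has components $\Phi(\eta)_{(A,h)}=\eta_A(h)\in U(A)$ and, on a morphism $(f,\alpha)$, its structure 2-cell is obtained by pasting the morphism $\eta_{A'}(\alpha)$ in $U(A')$ together with the pseudonaturality isomorphism $\eta_f$ evaluated at $h$. Since $\eta_f$ is always invertible and $\eta_{A'}$ preserves isomorphisms, the resulting structure 2-cell becomes invertible whenever $\alpha$ is, which is exactly the sigma condition of the paper's definition. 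Conversely, I would define the backward functor $\Psi$ by sending a sigma-natural $\rho\:\Delta 1\Rightarrow U\c F$ to the pseudonatural transformation $\Psi(\rho)$ whose $A$-component is the functor $W(A)\to U(A)$ taking $h$ to $\rho_{(A,h)}$ and a morphism $\alpha\:h\to h'$ in $W(A)$ to the structure 2-cell of $\rho$ on the vertical morphism $(\id_A,\alpha)$; the pseudonaturality isomorphism $\Psi(\rho)_f$ at $h$ is defined to be the structure 2-cell of $\rho$ on the cartesian morphism $(f,\id_{W(f)(h)})$, which is indeed invertible by the sigma condition.

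I would then verify that the coherence axioms match up on both sides. Functoriality of $\Psi(\rho)_A$ on morphisms of $W(A)$ and the two pseudonaturality axioms for $\Psi(\rho)$ (with respect to composition and identities in $\A$) follow from the oplax-naturality axioms of $\rho$ applied, respectively, to composites of vertical morphisms, composites of cartesian morphisms, and the interchange between vertical and cartesian parts. The invertibility of the unit and counit of the equivalence $\Phi\dashv\Psi$ reduces to the observation that the structure 2-cell of $\rho$ on a general morphism $(f,\alpha)$ is determined, via the oplax-naturality axiom on the canonical factorisation $(f,\alpha)=(f,\id)\c(\id_{A'},\alpha)$, by its values on the cartesian and vertical pieces; these are exactly the data recovered by $\Phi\Psi(\rho)$, and symmetrically for $\Psi\Phi(\eta)$. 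Pseudonaturality of the equivalence in $U$ is immediate, since both $\Phi$ and $\Psi$ commute with post-whiskering by an arbitrary pseudonatural transformation $U\Rightarrow U'$.

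The main obstacle, and the bulk of the technical work, is the diagrammatic bookkeeping. When translating the oplax-naturality axiom of $\rho$ on a composite $(f,\alpha)\c(g,\beta)$ in $\Groth{W}$ into the cocycle condition for $\eta_{f\c g}$ in terms of $\eta_f$ and $\eta_g$, one encounters the pseudofunctoriality coherences $\sigma_{f,g}$ of $W$, and threading these through the pasting manipulations requires care. No conceptual difficulty arises beyond the classical $\Cat$-enriched case treated by Street in \cite{Streetlimits}; the only genuinely new ingredient is the sigma/oplax subtlety, which accommodates precisely the non-invertible vertical morphisms allowed in $\Groth{W}$ for a general pseudofunctor $W$.
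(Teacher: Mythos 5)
The paper does not prove this proposition at all — it is imported verbatim from Street's \emph{Limits indexed by category-valued 2-functors} and from Descotte--Dubuc--Szyld, so there is no in-paper argument to compare against. Your construction is precisely the standard one found in those references: the vertical/cartesian factorisation of morphisms of $\Groth{W}$ (which the paper itself records in Remark 2.10 for the special case of bisieves) is the right structural lever, the sigma condition does correspond exactly to invertibility of the structure cell on cartesian morphisms, and the unit/counit reduction via that factorisation is how the quasi-inverse is checked. The only point left implicit in your sketch is the action of $\Phi$ and $\Psi$ on morphisms of the two hom-categories (modifications on both sides), which is needed since the claim is an equivalence of categories rather than a bijection of objects; this is routine and does not affect the correctness of the approach.
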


We now prove the main result of this section.

\begin{teor}\label{teorsigmabicolimbisieves}
	Let $\tau$ be a subcanonical bitopology and let $S\: R \Rightarrow y(C)$ be a covering bisieve over $C$. Then $S$ is a sigma-bicolim bisieve.
\end{teor}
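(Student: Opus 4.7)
The plan is to verify the defining universal property of $\sigmabicolim{F}$ from Definition~\ref{defsigmabicolim} by stitching together three pseudonatural-in-$D$ equivalences of categories: the 2-categorical Yoneda lemma, precomposition with $S$ (which is an equivalence because $\tau$ is subcanonical, so that $y(D)$ is a stack), and the conicalisation equivalence of Proposition~\ref{conicaliz}.

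Concretely, for each $D\in\K$ I would build the chain
$$\HomC{\K}{C}{D}\simeq \HomC{\Psm{\K\op}{\Cat}}{y(C)}{y(D)} \simeq \HomC{\Psm{\K\op}{\Cat}}{R}{y(D)} \simeq \HomC{\msigma{(\Grothdiag{R})\op}{\Cat}}{\Delta 1}{\HomC{\K}{F(-)}{D}}.$$
The first step is the 2-categorical Yoneda lemma. The second is the functor $-\c S$, which by Definitions~\ref{subcanbitopology} and~\ref{defstack2cat} is an equivalence precisely because $\tau$ is subcanonical and $S$ is a covering bisieve over $C$. The third is Proposition~\ref{conicaliz} applied with $W=R$ and $U=y(D)$, using that $y(D)\c F$ is exactly $\HomC{\K}{F(-)}{D}$ on $(\Grothdiag{R})\op$. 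Each step is pseudonatural in $D$: Yoneda is pseudonatural; precomposition by the fixed $S$ commutes with postcomposition by a morphism $D\to D'$; Proposition~\ref{conicaliz} is explicitly pseudonatural in $U$, and $y(-)$ is pseudonatural in $D$. The composite pseudonatural equivalence is then precisely the universal property of $\sigmabicolim{F}$ required by Definition~\ref{defsigmabicolim}, so Definition~\ref{sigmabicolimsieve} yields the claim.

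To finish, I would identify the universal sigma-bicocone $\mu\:\Delta 1\Rightarrow\HomC{\K}{F(-)}{C}$ by tracing $\id{C}$ through the chain: Yoneda sends $\id{C}$ to $\id{y(C)}$, precomposition with $S$ returns $S$ itself, and Proposition~\ref{conicaliz} converts $S$ into a sigma-natural transformation whose component at $(A,h)\in\Grothdiag{R}$ is the morphism $h\:A\to C$, with sigma-naturality 2-cells on a map $(g,\alpha)\:(E,h)\to(D,f)$ built out of the pseudonaturality isomorphisms $\sigma_{f,g}$ of $S$ together with $\alpha$. The main obstacle I expect is precisely this last bookkeeping: verifying that the sigma-naturality 2-cells emerging from Proposition~\ref{conicaliz} coincide with those of the tautological cocone of the $h$'s, and confirming that the three equivalences genuinely compose as a pseudonatural equivalence of categories rather than just a family of equivalences pointwise in $D$.
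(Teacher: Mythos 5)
Your proposal is correct and matches the paper's proof exactly: the same three-step chain (Yoneda, the subcanonical stack condition for the representable $y(D)$ applied to the covering bisieve $S$ over $C$, and the conicalisation equivalence of Proposition~\ref{conicaliz}), composed pseudonaturally in $D$. Your final paragraph identifying the universal sigma-bicocone by tracing $\id{C}$ through the chain is exactly the content the paper records separately in Remark~\ref{sigmabicocone}.
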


\begin{proof}
	We need to prove that there exists a pseudonatural equivalence of categories 
	$$\HomC{\K}{C}{D}\simeq \HomC{\msigma{{\left(\Grothdiag{W}\right)}\op}{\Cat}}{\Delta 1}{\HomC{\K}{F(-)}{D}}.$$
	We construct this equivalence of categories as composite of three equivalences of categories. 
	
	\noindent Since the Yoneda embedding is fully faithful, there exists an isomorphism of categories
	\begin{eqD*}
		\HomC{\K}{C}{D}\iso \HomC{\Psm{\K\op}{\Cat}}{\HomC{\K}{-}{C}}{\HomC{\K}{-}{D}}
		\end{eqD*}
Moreover, since the bitopology is subcanonical, the definition of  stack  (Definition \ref{defstack2cat}) applied to the representable $\HomC{\K}{-}{C}$ yields an equivalence of categories
\begin{eqD*}
\HomC{\Psm{\K\op}{\Cat}}{\HomC{\K}{-}{C}}{ \HomC{\K}{-}{D}} \simeq \HomC{\Psm{\K\op}{ \Cat}}{R}{  \HomC{\K}{-}{D}}
\end{eqD*}
Finally, by Proposition \ref{conicaliz}, there exists an equivalence of categories
\begin{eqD*}
	\HomC{\Psm{\K\op}{ \Cat}}{R}{  \HomC{\K}{-}{D}}\simeq \HomC{\msigma{(\Grothdiag{W})\op}{\Cat}}{\Delta 1}{\HomC{\K}{F(-)}{D}}
	\end{eqD*}
Since all three equivalences are pseudonatural in $D$, the composite of them yields the desired pseudonatural equivalence of categories.
\end{proof}

We explicitly write the universal sigma-bicocone that presents an object as the sigma-bicolim of a covering bisieve over it.

\begin{oss}[universal sigma-bicocone of a sigma-bicolim bisieve] \label{sigmabicocone} 
The universal sigma-bicocone $\lambda$ of the sigma-bicolimit of Theorem \ref{teorsigmabicolimbisieves} is obtained applying to the identity of $C$ the chain of equivalences of categories of the proof. 

Given $(D, D\ar{f} C)\in \Groth{R}$, we have that the component $\lambda_f$ of $\lambda$ is  the morphisms $f$ itself. 
 Given a morphism of the form $(g, \id{})\: (E, \wt{f\c g}) \to (D,f)$ in $\Groth{R}$, the associated structure 2-cell is $\lambda_{(g,\id{})}= \sigma_{f,g}$ (the isomorphism between $\wt{f\c g}$ and $f\c g$ given by the pseudonaturality of the bisieve S). Moreover, given a morphism of the form $(\id{}, \alpha)$ in $\Groth{R}$, the structure 2-cell $\lambda_{(\id{}, \alpha)}$ is simply equal to $\alpha$. So, given a generic morphism $(g,\alpha)\: (E, h) \to (D,f)$ the structure 2-cell $\lambda_{(g,\alpha)}$ is given by the following pasting diagram
 \begin{cd}
 	{E} \arrow[rd,"{h}",""{name=A}] \arrow[d,"{}",equal] \&[10ex] {} \\
 	{E}  \arrow[from=A,"{\alpha}",Rightarrow ,shorten <=2.5ex, shorten >= 2.5ex] \arrow[r,"{\wt{f\c g}}"'] \arrow[d,"{g}",""{name=C}]\& {C} \\
 	{D} \arrow[ru,"{f}"',""{name=D}] \arrow[from=C,to=D,"{\sigma_{f,g}}"{inner sep=1.3ex},iso, ]\& {} 
 \end{cd}
 (This can be seen using the factorization of $(g,\alpha)$ as in Remark  \ref{splitmorGr}).
\end{oss}

We now prove a result that ensures that every time we consider a covering bisieve of the form $f\st S$ on $Z\in \K$ (see Remark \ref{bisievefstarS}) for a subcanonical bitopology, the object $D$ can be expressed as a sigma-bicolimit with diagram parametrized only by the morphisms of the form $f\st g$ with $g\in S$. This result will be very useful for us in \cite{Prin2bunquo2stacks} when considering the analogues of quotient prestacks one dimension higher.

\begin{prop}\label{coconofstar}
	Let $\K$ be a small 2-category with iso-comma objects and let $\tau$ be a subcanonical bitopology on it. Let then $S\: R \Rightarrow \K(-,Y)$ be a covering bisieve over $Y\in\K$ and $f\: X\to Y$ be a morphism in $\K$. Then $X$ is the sigma-bicolimit of the 2-functor 
	$$F\:  \Grothdiag{R} \ar{\opn{inc}} \laxslice{\K}{Y} \ar{f\st} \laxslice{\K}{X}  \ar{\opn{dom}} \K$$
	where $\opn{inc}=\Groth{S}\: \Groth{R} \to \Groth{\K(-,Y)}$ is the inclusion 2-functor  and $f\st$ is the 2-functor of iso-comma object along the morphism $f$. 
\end{prop}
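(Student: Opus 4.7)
The plan is to reduce the claim to Theorem \ref{teorsigmabicolimbisieves} applied to the covering bisieve $f\st S$ on $X$. By axiom (T2) of Definition \ref{bitopology}, $f\st S$ is a covering bisieve on $X$, and since $\tau$ is subcanonical, Theorem \ref{teorsigmabicolimbisieves} yields $X = \sigmabicolim{G}$ where $G\: \Groth{f\st S} \to \K$ is the projection 2-functor. It therefore suffices to show that $F$ has the same sigma-bicolimit as $G$.

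To compare the two diagrams, I will use a 2-functor $P\: \Groth{R} \to \Groth{f\st S}$ sending $(D, h\: D \to Y)$ to $(X\x[Y]D,\, f\st h)$, with action on morphisms and 2-cells induced by the universal property of iso-comma objects. By construction, $G \c P = F$. The goal then becomes to prove that restriction along $P$ induces a pseudonatural (in $W$) equivalence between the category of sigma-bicocones under $G$ with vertex $W$ and the corresponding category for $F$, namely
$$\HomC{\msigma{(\Groth{f\st S})\op}{\Cat}}{\Delta 1}{\K(G-,W)} \simeq \HomC{\msigma{(\Groth{R})\op}{\Cat}}{\Delta 1}{\K(F-,W)}.$$

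The key tool for this equivalence is the factorization recalled in Remark \ref{bisievefstarS}: every object $(E, k) \in \Groth{f\st S}$ comes equipped with a canonical morphism $m_k\: E \to X\x[Y]E$ (induced by $\id{E}$ and $k$ via the universal property of the iso-comma) together with a canonical isomorphism $k \cong (f\st \wt{f\c k}) \c m_k$. This exhibits a morphism in $\Groth{f\st S}$ from $(E, k)$ to the object $P((E, \wt{f\c k}))$ lying in the image of $P$. Using these canonical factorizations, any sigma-bicocone $\rho$ under $F$ with vertex $W$ extends to a sigma-bicocone under $G$ by setting $\sigma_{(E,k)} \deq \rho_{(E, \wt{f\c k})} \c m_k$, with sigma-natural 2-cells obtained by pasting those of $\rho$ with the coherence isomorphisms of iso-comma objects. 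Conversely, the restriction of a modification between sigma-bicocones is enough to recover it, yielding fully faithfulness.

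The main technical obstacle is verifying that this extension is compatible with the sigma-naturality axioms. Concretely, given a generic morphism $(g, \alpha)\: (E, k) \to (E', k')$ in $\Groth{f\st S}$, one must decompose it through the canonical factorizations of $k$ and $k'$ into the image of $P$ and then check that the induced 2-cells satisfy the sigma-cocycle conditions. Pseudonaturality in $W$ follows from the naturality of all the constructions involved.
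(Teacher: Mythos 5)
Your proposal is correct and follows essentially the same route as the paper: reduce to Theorem \ref{teorsigmabicolimbisieves} applied to the covering bisieve $f\st S$ on $X$, and transfer sigma-bicocones between the diagram indexed by $\Groth{R}$ and the one indexed by $\Groth{f\st S}$ via the canonical factorization $m_k$ of Remark \ref{bisievefstarS}. Your packaging of this transfer as an equivalence of cocone categories induced by the comparison 2-functor $P$ with $G\c P=F$ is only a mild reorganization of the paper's direct verification of the one- and two-dimensional universal properties, and the coherence checks you defer are exactly the ones the paper carries out.
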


\begin{proof}
	Let $(g,\alpha)\: (W, W\ar{h} Y) \to (T, T\ar{l} Y)$ be a morphism in $\Groth{R}$. The structure 2-cell of the universal sigma-bicocone that presents $Y$ as sigma-bicolimit of $S$ corresponding to the morphism $(g,\alpha)$ (see Remark \ref{sigmabicocone}) is sent by $F$ to the following identical 2-cell
	\begin{eqD}{2cells}
	\begin{cdN}
		{X\xp{Y}{h}W} \arrow[rd,"{f\st h}",""{name=A}] \arrow[d,"{F((\id{},\alpha))}"',] \&[10ex] {} \\
		{X\xp{Y}{\wt{l\c g}}W}   \arrow[r,"{f\st(\wt{l\c g})}"'{pos=0.4}] \arrow[d,"{F((g,\id{}))}"',""{name=C}]\& {X} \\
		{X\xp{Y}{l}T} \arrow[ru,"{l}"',""{name=D}] \& {} 
	\end{cdN}
\end{eqD}
	where $F((\id{},\alpha))$ is the morphism induced by the universal property of the iso-comma object ${X\xp{Y}{\wt{l\c g}}W}$ as in the following diagram
	\begin{eqD*}
		\commaunivvN{X\xp{Y}{h} W}  {f\st h}{h\st f}{F((\id{},\alpha))}{X\xp{Y}{\wt{l\c g}} W}{X}{W}{Y}{f\st (\wt{l\c g})} {(\wt{l\c g})\st f}{f}{\wt{l\c g}}{\lambda^{f,\wt{l\c g}}}
		\h[3]= \h[3]
		\begin{cdN}
			{X\xp{Y}{h}W} \arrow[r,"{{f\st h}}"]  \arrow[d,"{{h\st f}}"']\& {X} \arrow[d,"{f}"] \arrow[ld,"{{\lambda^{f,h}}}",Rightarrow ,shorten <=3.5ex, shorten >= 3.5ex, shift left=-2ex]\\
			{W} \arrow[r,"{h}", bend left=30 ,""'{name=A}] \arrow[r,"{\wt{l\c g}}"', bend right=30 ,""{name=B}] \arrow[from=A,to=B,"{\alpha}",Rightarrow ,shorten <=0.5ex, shorten >= 0.2ex]\& {Y}
		\end{cdN}
	\end{eqD*}
and $F((g,\id{}))$ is the morphism induced by the universal property of the iso-comma object $X\xp{Y}{l}T$ as in the following diagram
	\begin{eqD*}
	\commaunivvN{X\xp{Y}{\wt{l\c g}} W}  {f\st(\wt{l\c g})}{g\c ((\wt{l\c g})\st f)}{F((g,\id{}))}{X\xp{Y}{l} T}{X}{T}{Y}{f\st l} {l\st f}{f}{l}{\lambda^{f,l}}
	\h[3]= \h[3]
	\begin{cdN}
		{X\xp{Y}{\wt{l\c g}} W} \arrow[rr,"{{f\st(\wt{l\c g})}}"]  \arrow[d,"{{(\wt{l\c g})\st f}}"']\& {} \& {X} \arrow[d,"{f}"] \arrow[lld,"{{\lambda^{f,\wt{l\c g}}}}"'{inner sep=0.2ex},Rightarrow ,shorten <=6.5ex, shorten >= 6.5ex, shift left=-2ex]\\
		{W} \arrow[rr,"{\wt{l\c g}}", bend left=30 ,""'{name=A}] \arrow[r,"{g}"', ""{name=B}] \& {T} \arrow[from=A,"{\sigma_{l,g}}"{inner sep=0.2ex,pos=0.9},Rightarrow,shorten <=-0.5ex, shorten >= -0.5ex] \arrow[r,"{l}"'] \& {Y}
	\end{cdN}
\end{eqD*}
Notice that we can see the explicit action of $F$ on a generic morphism $(g,\alpha)$ putting together $F((g,\id{}))$ and $F((\id{},\alpha))$.

We want to prove that the 2-cells like the one in diagram \ref{2cells} form a universal sigma-bicocone for the sigma-bicolimit of $F$.
To do so, we use the fact that $X$ is the sigma-bicolimit of the covering bisieve $f\st S\: R_f \Rightarrow \K(-,X)$ with structure 2-cells as described in Remark \ref{sigmabicocone}. 

We consider a sigma-bicocone $\beta$ on an object $U\in \K$ that has structure 2-cell associated to $(g,\alpha)$ given by 
\begin{eqD}{beta}
\begin{cdN}
	{X\xp{Y}{h}W} \arrow[rd,"{\beta_ h}",""{name=A}] \arrow[d,"{\widehat{\alpha}}"',] \&[10ex] {} \\
	{X\xp{Y}{\wt{l\c g}}W} \arrow[from=A,"{\beta_{(\id,\alpha)}}", Rightarrow ,shorten <=2.5ex, shorten >= 2.5ex]  \arrow[r,"{\beta_{\wt{l\c g}}}"'{pos=0.5},""{name=S}] \arrow[d,"{\widehat{g}}"',""{name=C}]\& {U} \\
	{X\xp{Y}{l}T} \arrow[from=S,"{\beta_{(g,id)}}",twoiso ,shorten <=4.5ex, shorten >= 4.5ex, shift left=-2ex] \arrow[ru,"{\beta_l}"',""{name=D}] \& {} 
\end{cdN}
\end{eqD}
We want to use $\beta$ to construct a sigma-bicocone for the diagram that expresses $X$ as sigma-bicolimit of the covering bisieve $f\st S$, in order to use the universal property of the sigma-bicolimit.
Given a morphism $(g,\alpha)\: (Z,Z\ar{s} X) \to (Z', Z'\ar{t} X)$ in $\Groth{R_f}$, we consider the following 2-cell:
\begin{eqD}{A}
\begin{cdN}
	{Z} \arrow[d,"{}"',equal] \arrow[r,"{m_s}"] \& {X\xp{Y}{f\c s}Z} \arrow[d,"{F((\id{},f\star \alpha))}"] \arrow[r,"{}",aiso]\&	{X\xp{Y}{\overline{f\c s}}Z}  \arrow[rd,"{\beta_ {\overline{f\c s}}}",""{name=A}] \arrow[d,"{F((\id{},{\overline{\alpha}}))}"',] \&[10ex] {} \\
	{Z}  \arrow[r,"{m_{\wt{t\c g}}}"] \arrow[d,"{g}"']\& {X\xp{Y}{f\c (\wt{t\c g})}Z} \arrow[ld,"{\zeta_{(g,\alpha)}}",twoiso ,shorten <=4.5ex, shorten >= 4.5ex] \arrow[d,"{\widehat{g}}"]  \arrow[r,"{}",aiso]\&	{X\xp{Y}{\overline{f\c (\wt{t\c g})}}Z}  \arrow[from=A,"{\beta_{(\id{},\overline{\alpha})}}", Rightarrow ,shorten <=2.5ex, shorten >= 2.5ex]  \arrow[r,"{\beta_{\overline{f \c (\wt{t\c g})}}}"'{pos=0.5},""{name=S}] \arrow[d,"{F((g,\psi))}"',""{name=C}]\& {U} \\
	{Z'} \arrow[r,"{m_t}"']\& {X\xp{Y}{f\c t}Z'} \arrow[r,"{}",aiso] \& {X\xp{Y}{\overline{f\c t}}T} \arrow[from=S,"{\beta_{(g,\psi)}}",twoiso ,shorten <=5.5ex, shorten >= 5.5ex, shift left=-2ex] \arrow[ru,"{\beta_{\overline{f\c t}}}"',""{name=D}] \& {} 
\end{cdN}
\end{eqD}
where:
\begin{itemize}
	\item[-] $m_s, m_{\wt{t\c g}}$ and $m_t$ are the morphisms given by the factorization of $s,\wt{t\c g}$ and $t$ as in Remark \ref{bisievefstarS}.
	\item[-] $\overline{\alpha}$ is the following 2-cell
	\begin{cd}
		{Z} \arrow[d,"{\alpha}"{pos=0.35},Rightarrow ,shorten <=1.5ex, shorten >= 2.5ex, shift left=3ex] \arrow[r,"{s}"]  \arrow[d,"{}", equal] \arrow[rr,"{\overline{f\c s}}", bend left=40,""'{name=A}]\& {X} \arrow[d,"{}"{pos=0.35},twoiso ,shorten <=2.5ex, shorten >= 2.5ex, shift left=2ex] \arrow[from=A,"{}",twoiso ] \arrow[r,"{f}"] \& {Y} \\
		{Z} \arrow[ru,"{\wt{t\c g}}"'] \arrow[rru,"{\overline{f\c (\wt{t\c g})}}"', bend right=30,""{name=G}]\& {} \& {} 
	\end{cd}
\item[-] the morphism $(g,\psi)$ corresponds to the following equal 2-cells in the lax slice $\laxslice{\K}{Y}$ 
\begin{eqD*}
	\begin{cdN}
		{Z} \arrow[d,"{\sigma_{t,g}}"{pos=0.35},twoiso ,shorten <=1.5ex, shorten >= 2.5ex, shift left=3ex] \arrow[r,"{\wt{t\c g}}"]  \arrow[d,"{g}"', ] \arrow[rr,"{\overline{f\c (\wt{t\c g}})}", bend left=40,""'{name=A}]\& {X} \arrow[d,"{}"{pos=0.35},twoiso ,shorten <=2.5ex, shorten >= 2.5ex, shift left=2ex] \arrow[from=A,"{}",twoiso ] \arrow[r,"{f}"] \& {Y} \\
		{Z'} \arrow[ru,"{\wt{t\c g}}"'] \arrow[rru,"{\overline{f\c t}}"', bend right=30,""{name=G}]\& {} \& {} 
	\end{cdN}
	\h[3]=\h[3]
	\begin{cdN}
		|[alias=G]|{Z}  \arrow[d,"{g}"'] \arrow[r,"{\overline{f\c (\wt{t\c g}})}", bend left=40,""'{name=A}] \arrow[r,"{\wt{\overline{f\c t}\c g}}"',""{name=B}] \&[8ex] {Y} \\
		{Z'} \arrow[ru,"{\overline{f\c t}}"',""{name=C}, bend right=30]  \arrow[from=A, to=B,"{}",twoiso] \arrow[to=C,from=G,"{\sigma_{\overline{f\c t},g}}"'{inner sep=1ex},twoiso ,shorten <=1.5ex, shorten >= 1.5ex]\& {}
	\end{cdN}
\end{eqD*}
\item[-] the morphism $\widehat{g}$ is induced by the universal property of the iso-comma object $X\xp{Y}{f\c t}Z'$ using the isomorphic 2-cell
\begin{eqD*}
	\begin{cdN}
		{Z} \arrow[d,"{\sigma_{t,g}}"{pos=0.35},twoiso ,shorten <=1.5ex, shorten >= 2.5ex, shift left=3ex] \arrow[r,"{\wt{t\c g}}"]  \arrow[d,"{g}"', ] \arrow[rr,"{\overline{f\c (\wt{t\c g}})}", bend left=40,""'{name=A}]\& {X} \arrow[from=A,"{}",twoiso ] \arrow[r,"{f}"] \& {Y} \\
		{Z'} \arrow[ru,"{\wt{t\c g}}"'] \& {} \& {} 
	\end{cdN}
\end{eqD*}
\item[-]the 2-cell $\zeta_{(g,\alpha)}$ is induced using the two-dimensional universal property of the iso-comma object $X\xp{Y}{\overline{f\c t}} T$ (starting from the identity 2-cell and a 2-cell essentially given by $\sigma_{t,g}$ ).
\end{itemize}
One can then prove that these 2-cells define a sigma-bicocone over $U$ for the diagram that expresses X as a sigma-bicolimit of the covering bisieve $f\st S$.
So this sigma-bicocone factors through the universal one. This means that there exist a unique morphism $u\: X\to U$ and isomorphic 2-cells $\iota$ and $\kappa$ such that the diagram
\begin{eqD}{B}
	\begin{cdN}
		{Z} \arrow[rrd,"{s}",""{name=A}] \arrow[d,"{}"',equal] \arrow[r,"{m_s}"] \& {X\xp{Y}{f\c s}Z}  \arrow[r,"{}",aiso]\&	{X\xp{Y}{\overline{f\c s}}Z}  \arrow[rd,"{\beta_{\overline{f\c s}}}"] \arrow[d,"{\iota_s}",twoiso ,shorten <=1.5ex, shorten >= 1.5ex]\&[10ex] {} \\
		{Z} \arrow[r,"{\sigma_{l,g}}"{inner sep=1.2ex},iso,shift left=-4.5ex] \arrow[from=A,"{\alpha}",Rightarrow ,shorten <=4.5ex, shorten >= 4.5ex] \arrow[rr,"{\wt{t\c g}}"]  \arrow[d,"{g}"']\& {}    \&	{X} \arrow[r,"{u}"] \arrow[d,"{\iota_t^{-1}}",twoiso ,shorten <=1.5ex, shorten >= 1.5ex]  \& {U} \\
		{Z'} \arrow[rru,"{t}"'] \arrow[r,"{m_t}"']\& {X\xp{Y}{f\c t}Z'} \arrow[r,"{}",aiso] \& {X\xp{Y}{\overline{f\c t}}T}  \arrow[ru,"{\beta_{\overline{f\c t}}}"',""{name=D}] \& {} 
	\end{cdN}
\end{eqD}
is equal to diagram (\ref{A}).
It is then straightforward to prove that the morphism $u\: X\to U$ together with the isomorphic 2-cell
\begin{cd}
	{X\xp{Y}{h}W} \arrow[ddd,"{f\st h}"', bend right=70] \arrow[rrd,"{\beta_h}", bend left=20]  \arrow[d,"m_{f\st h}"'] \arrow[rd,"{}",equal, bend left=20]\& {} \& {}\\
	{(X\xp{Y}{h}W)\xp{Y}{f\c f\st h}} \arrow[d,"{}",aiso]\& {X\xp{Y}{h}W} \arrow[r,"{\beta_h}"]  \& {U}\\
	{(X\xp{Y}{h}W)\xp{Y}{\overline{f\c f\st h}}X} \arrow[d,"{\iota_{f\st h}}",twoiso ,shorten <=3.5ex, shorten >= 3.5ex] \arrow[rru,"{\beta_{\overline{f \c f\st h}}}"', bend right=70] \arrow[r,"{}",iso, shift left=-5ex] \arrow[r,"{}",aiso] \& {(X\xp{Y}{h}W)\xp{Y}{\wt{h\c h\st f}}X } \arrow[ru,"{\beta_{\wt{h\c h\st f}}}"'] \arrow[u,"{\widehat{h\st f}}"]  \arrow[ru,"{}",iso, shift left=2.6ex]\& {}\\[5ex]
	{X} \arrow[rruu,"{u}"', bend right=70] \& {} \& {}
\end{cd}
and the analogous one associated to the morphism $l$, gives the desired factorization of the 2-cell of diagram (\ref{beta}). Notice that here the key idea is to apply the universality of the sigma-bicocone that presents $X$ as sigma-bicolimit of $f\st S$ to the structure 2-cell that corresponds to the morphism $(a,\sigma_{f\st l, a})$ with $a=F((g,\id{}))\c F((\id{},\alpha))$. 

Let now $u,v\: X \to U$ and consider a collection of  compatible 2-cells of the form
\begin{cd}
	{} \&[5ex] {X} \arrow[dd,"{n_h}", Rightarrow ,shorten <=2.5ex, shorten >= 2.5ex] \arrow[rd,"{u}", bend left=20] \&[5ex] {} \\[-4ex]
	{X\xp{Y}{h} W} \arrow[ru,"{f\st h}", bend left=20] \arrow[rd,"{f\st h}"', bend right=20] \& {} \& {U} \\[-4ex]
	{} \& {X}  \arrow[ru,"{v}"', bend right=20]\& {}
\end{cd}
for every $(W, W \ar{h} Y)\in \Groth{R}$. 
We can use this collection to construct another collection of compatible 2-cells of the form 
\begin{eqD}{V}
\begin{cdN}
	{} \& {} \&{}  \&[5ex] {X} \arrow[dd,"{n_{\overline{f\c s}}}", Rightarrow ,shorten <=2.5ex, shorten >= 2.5ex] \arrow[rd,"{u}", bend left=20] \&[5ex] {} \\[-4ex]
	{Z} \arrow[r,"{m_s}"] \& {X\xp{Y}{{f \c s}} Z} \arrow[r,"{}",aiso]\&	{X\xp{Y}{\overline{f \c s}} Z} \arrow[ru,"{f\st (\overline{f\c s})}", bend left=20] \arrow[rd,"{f\st (\overline{f\c s})}"', bend right=20] \& {} \& {U} \\[-4ex]
	{} \& 	{} \& {} \& {X}  \arrow[ru,"{v}"', bend right=20]\& {}
\end{cdN}
\end{eqD}
indexed by objects $(Z,Z\ar{s} X)\in \Groth{R_f}$.
Thanks to the universality of the universal sigma-bicocone that expresses $X$ as sigma-bicolimit of $f\st S$, we induce a 2-cell $\Gamma\: u \Rightarrow v$ such that for every $(Z,Z\ar{s} X)\in \Groth{R_f}$, the whiskering of $\Gamma$ with the morphism 
\begin{cd}
	{Z} \arrow[r,"{m_s}"] \& {X\xp{Y}{{f \c s}} Z} \arrow[r,"{}",aiso]\&	{X\xp{Y}{\overline{f \c s}} Z} \arrow[r,"{f\st (\overline{f\c s})}"] \& {X}
\end{cd}
is equal to diagram (\ref{V}). Applying this equality to $s=f\st h$, it is straightforward to prove that $\Gamma \star f\st h=n_h$. This concludes the proof that $X=\sigmabicolim{F}$.
	%ORA
\end{proof}

\begin{oss}
	The result of Proposition \ref{coconofstar} allows one to present $Z$ as the sigma-bicolimit of a diagram indexed on the morphisms of the bisieve $S$ avoiding to consider the morphisms in $f\st S$ that are not iso-commas of a morphism of $S$ along $f$. This offers substantial benefits in practice.
\end{oss}
\section{2-stacks} \label{sec2stacks}

Let $\K$ be a small (strict) 2-category with bi-iso-comma objects and let $\tau$ be a Grothendieck bitopology on it. We introduce a notion of 2-stack that is suitable for a trihomomorphism from $\K\op$ to $\Bicat$. This notion is a natural generalization of Street's notion of stack (Definition \ref{defstack2cat}) one dimension higher. The main motivation for the author to introduce this new notion has been the lack of a notion of higher dimensional stack suitable for the higher dimensional analogue of the quotient stack studied in \cite{Prin2bunquo2stacks}.

\begin{defne}\label{2stack}
	A trihomomorphism $F:\K\op \to \Bicat$ is a \dfn{2-stack} if for every object $C\in \K$ and every bisieve $S\: R\Rightarrow y(C)$ in $\tau(C)$ the pseudofunctor
		$$-\c S\: \Tricat(\K\op, \Bicat)(y(C), F) \longrightarrow \Tricat(\K\op, \Bicat)(R, F)$$
		is a biequivalence. 
\end{defne}

\begin{oss}
	Similar higher dimensional generalizations of Street's notion of stack have been introduced in the literature in different contexts. The notion of $\infty$-stack studied by Lurie in \cite{Lurie} applies the same idea to a functor with domain an \linebreak $(\infty,1)$-category that takes values in $(\infty,1)$-categories. When truncated to dimension 3 the notion of $\infty$-stack yields a notion of $(2,1)$-stack for a functor with domain a $(2,1)$-category that takes values in $(2,1)$-categories. 
	
	\noindent Moreover, Campbell in their PhD thesis \cite{Campbellthesis} proposed a notion of $2$-stack for a trihomomorphism from a one dimensional category to $\Bicat$ that generalizes Street's definition in the same fashion as our notion.
	
	\noindent None of these two notions present in the literature is suitable for the higher dimensional generalization of quotient prestacks presented in \cite{Prin2bunquo2stacks}.
\end{oss}

Since Definition \ref{2stack} is quite abstract and very difficult to use in practice, we aim at a characterization of the notion of 2-stack given by explicit conditions that can be checked more easily to prove that a certain trihomomorphism is a 2-stack.

\begin{oss}
	Notice that it would be hard to choose the right gluing conditions to ask for a 2-stack trying to generalize the classical definition of stack one dimension higher. This is because one would not know what are the right coherences to ask, especially for the gluing condition on objects. For this reason, we introduced the notion of 2-stack as a generalization of Street's definition and we now use our definition in order to find the explicit gluing conditions. This will automatically give us the right coherence conditions to require. 
\end{oss}

As first step towards a characterization given by explicit conditions, we give the following preliminary characterization of the notion of 2-stack.

\begin{prop}\label{charYoneda}
	Let $(\K,\tau)$ be a bisite. A trihomomorphism $F:\K\op \to \Bicat$ is a \dfn{2-stack} if and only if for every object $\cC\in \K$ and every bisieve $S\: R\Rightarrow \K(-,\cC)$ in $\tau(\cC)$ the pseudofunctor
	$$(-\c S)\c \Gamma\: F(C) \longrightarrow \Tricat(\K\op, \Bicat)(R, F),$$
	where $\Gamma$ is the biequivalence given by the tricategorical Yoneda lemma (Theorem \ref{Yonedatricat}), is a biequivalence. 
\end{prop}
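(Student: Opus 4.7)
The strategy is to deduce the equivalence of the two formulations directly from the Tricategorical Yoneda Lemma together with the two-out-of-three property for biequivalences of bicategories. For each $C\in \K$, Theorem~\ref{Yonedatricat} provides a biequivalence
\[\Gamma\: F(C) \longrightarrow \Tricat(\K\op, \Bicat)(y(C), F),\]
which is independent of the bisieve $S$.

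The only further ingredient needed is the following closure property: given a composable pair of pseudofunctors between bicategories, if one of the two factors is a biequivalence then the other factor is a biequivalence if and only if the composite is. This is immediate from the characterization of biequivalences recalled in the introduction — a pseudofunctor of bicategories is a biequivalence precisely when it is surjective on equivalence classes of objects, essentially surjective on morphisms and fully faithful on 2-cells. All three conditions are manifestly preserved and reflected by composition with a biequivalence, so the two-out-of-three property follows.

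Applying this to the composable pair
\[F(C) \xrightarrow{\Gamma} \Tricat(\K\op, \Bicat)(y(C), F) \xrightarrow{-\c S} \Tricat(\K\op, \Bicat)(R, F),\]
I would conclude that, since $\Gamma$ is a biequivalence, the composite $(-\c S)\c \Gamma$ is a biequivalence if and only if $-\c S$ is a biequivalence. Comparing with Definition~\ref{2stack}, it follows that $F$ is a 2-stack if and only if, for every $C\in \K$ and every covering bisieve $S\in \tau(C)$, the pseudofunctor $(-\c S)\c \Gamma$ is a biequivalence, which is the desired characterization.

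The proof is essentially formal and I do not expect any substantive obstacle: the main content is packaged into Theorem~\ref{Yonedatricat}, and the remaining work amounts to the routine bookkeeping needed to verify the two-out-of-three property for biequivalences of bicategories from the explicit characterization.
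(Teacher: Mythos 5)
Your proposal is correct and matches the paper's argument: the paper's proof is exactly "straightforward using the two-out-of-three property of biequivalences of bicategories," applied to the composite of $\Gamma$ (a biequivalence by the Tricategorical Yoneda Lemma) with $-\c S$. Your additional remarks justifying two-out-of-three via the explicit characterization of biequivalences are a harmless elaboration of the same route.
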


\begin{proof}
	Straightforward using the two out of three property of biequivalences of bicategories.
\end{proof}

This characterization will allow us to extract from the abstract definition of 2-stack the explicit gluing conditions. We will need to recall the following basic characterization of biequivalence between bicategories. This result is considered folklore, but it appeared for the first time in \cite{GabberRamero}.

\begin{prop}[characterization of biequivalences,\cite{GabberRamero}] \label{charbieq}
	Assuming the axiom of choice, a pseudofunctor $G\: \C \to \D$ between bicategories is a biequivalence if and only if the following conditions hold:
		\begin{itemize}
		\item[(1)]  $G$ surjective on equivalence classes of objects;
		\item [(2)]  $G$ is essentially surjective on morphisms;
		\item [(3)]  $G$ is fully-faithful on 2-cells.
	\end{itemize}
	\end{prop}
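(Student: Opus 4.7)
The plan is to prove the two implications separately, with the backward direction being the substantive content and requiring the axiom of choice.

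\textbf{Forward direction.} Suppose $G\: \C \to \D$ is a biequivalence, so there exist a pseudofunctor $H\: \D \to \C$ and pseudonatural adjoint equivalences $\eta\: \id{\C} \simeq H \circ G$ and $\varepsilon\: G \circ H \simeq \id{\D}$. Condition (1) is immediate: for every $d \in \D$, the component $\varepsilon_d\: G(H(d)) \to d$ is an equivalence, so $d$ and $G(H(d))$ lie in the same equivalence class. For (2) and (3), the standard argument is that the pseudoinverse forces each local functor $G_{c,c'}\: \C(c,c') \to \D(G(c),G(c'))$ to be an equivalence of categories. Indeed, $f \mapsto \eta_{c'}^{-1} \circ H(f) \circ \eta_c$ defines a weak inverse to $G_{c,c'}$, with the natural isomorphisms to the identity built from pseudonaturality of $\eta$, pseudonaturality of $\varepsilon$, and the triangle identities. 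Essential surjectivity of $G_{c,c'}$ at the level of objects of $\C(c,c')$ is exactly (2), and its full faithfulness on morphisms of $\C(c,c')$ is exactly (3).

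\textbf{Backward direction.} Assume (1)–(3). We construct a pseudoinverse $H\: \D \to \C$ level by level. On objects, for each $d \in \D$ we use (1) to choose $H(d) \in \C$ together with an equivalence $e_d\: G(H(d)) \to d$, a quasi-inverse $e_d^{-1}\: d \to G(H(d))$, and coherent unit and counit isomorphisms; when $d = G(c)$ we normalize by taking $H(G(c)) = c$ and $e_{G(c)} = \id{G(c)}$. On morphisms, for each $f\: d \to d'$ in $\D$, set $\wt f \deq e_{d'}^{-1} \circ f \circ e_d$, a morphism $G(H(d)) \to G(H(d'))$; by (2) there exist $H(f)\: H(d) \to H(d')$ in $\C$ and an invertible 2-cell $\varphi_f\: G(H(f)) \cong \wt f$, which we fix. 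On 2-cells, for each $\alpha\: f \Rightarrow f'$ in $\D$, the pasting
\[
\varphi_{f'}^{-1} \circ (e_{d'}^{-1} \star \alpha \star e_d) \circ \varphi_f
\]
is a 2-cell $G(H(f)) \Rightarrow G(H(f'))$ in $\D$; by (3) it is the image under $G$ of a unique 2-cell $H(\alpha)\: H(f) \Rightarrow H(f')$ in $\C$, which we take as the definition.

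The compositors $H(g) \circ H(f) \Rightarrow H(g \circ f)$ and unitors $\id{H(d)} \Rightarrow H(\id{d})$ are produced by the same mechanism: their prescribed $G$-images in $\D$ are dictated by the $\varphi$'s together with the compositor and unitor of $G$, and (3) lifts them uniquely to $\C$. The family $\{e_d\}$ with structure 2-cells the $\varphi_f$'s assembles into a pseudonatural adjoint equivalence $\varepsilon\: G \circ H \simeq \id{\D}$, and $\eta\: \id{\C} \simeq H \circ G$ reduces to an identity by the normalization $H(G(c)) = c$.

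\textbf{The main obstacle.} The substantive work is the verification that $H$ satisfies the pseudofunctor axioms (the associativity pentagon and the two unit triangles for the compositors), that $\varepsilon$ and $\eta$ are pseudonatural, and that the triangle identities for the biadjoint equivalence hold. Each such axiom is an equation between 2-cells in $\C$; by the faithfulness half of (3) it suffices to check the image under $G$ in $\D$, where the equation follows by a diagram chase from the corresponding coherence axiom for the pseudofunctor $G$, the bicategory axioms in $\D$, and the naturality of the chosen $\varphi$'s. The key leverage is the uniqueness clause in (3), which guarantees that the ad hoc liftings fit together consistently and require no further choices; the proof is essentially systematic bookkeeping, with no new geometric input beyond Steps~1–3.
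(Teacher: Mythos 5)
The paper does not actually prove this proposition: it imports it as folklore, citing \cite{GabberRamero}, so there is no internal proof to compare against. Your argument is the standard one for this folklore fact --- forward direction by exhibiting the local weak inverses $f\mapsto \eta_{c'}^{-1}\circ H(f)\circ \eta_c$, backward direction by choosing objects and equivalences with AC, lifting 1-cells up to isomorphism via (2), lifting 2-cells uniquely via (3), and discharging all coherence axioms by faithfulness of $G$ on 2-cells --- and in outline it is sound and complete enough for a result of this kind.

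One concrete flaw to repair: the normalization $H(G(c))\deq c$, $e_{G(c)}\deq \id{G(c)}$ is not well defined unless $G$ is injective on objects (if $G(c_1)=G(c_2)$ with $c_1\neq c_2$ there is no consistent choice), and even granting it, $\eta\:\id{\C}\Rightarrow H\circ G$ does not become an identity, since the chosen lift $H(G(f))$ of $\wt{G(f)}=G(f)$ need not equal $f$ when $G$ is not injective on 1-cells. The statement only requires $\eta$ to be a pseudonatural equivalence, so the fix is standard: drop the normalization, note that by (2) and (3) each local functor $G_{c,c'}$ is an equivalence of categories, lift the equivalence $e_{G(c)}\: G(H(G(c)))\to G(c)$ through the local equivalence to a 1-cell $u_c\: H(G(c))\to c$ with $G(u_c)\iso e_{G(c)}$, observe that $u_c$ is itself an equivalence because a locally fully faithful, locally essentially surjective pseudofunctor reflects equivalences (the unit and counit isomorphisms lift uniquely by (3)), and take $\eta_c$ to be a quasi-inverse of $u_c$; its pseudonaturality constraints are then produced and verified by the same lift-and-check-under-$G$ mechanism you describe. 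With that amendment the proof is correct.
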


The key idea behind the characterization that we will achieve is that the three conditions of Proposition \ref{charbieq} correspond to effectiveness conditions of the appropriate kind of data of descent on objects, morphisms and 2-cells. These effectiveness conditions are given by the biequivalence of Proposition \ref{charYoneda}. We will present the explicit conditions on objects, morphisms and 2-cells separately and then we will put them together to obtain the characterization of 2-stacks.

We start by considering 2-cells. Since stacks, that are two-dimensional analogues of sheaves, are sheaves on morphisms, we expect 2-stacks to be sheaves on 2-cells. And this is exactly the explicit condition that we find unpacking what it means for the pseudofunctors as in Proposition \ref{charYoneda} to be fully-faithful on 2-cells. 

We give the following natural definition.
\begin{defne}
	Let $F\: \K\op \to \Bicat$ be a trihomomorphism. Let then $S$ be a bisieve on $C\in \K$ and let $a,b\: X\to Y$ be two morphisms in $F(C)$ .  A \dfn{matching family  for $S$ of 2-cells of $F$} is a function which assigns to each $f\: D \to C$ in $S$ a 2-cell $w_f \: f\st a \Rightarrow f\st b$ in $F(D)$, in such a way that the following conditions are satisfied.
	For every $g\: E \to D$ in $\C$, the following is an equality of pasting diagrams
		\begin{eqD*}
			\begin{cdsN}{7}{7}
				{g\st f \st X} \arrow[d,"{}", simeq] \arrow[r,"{g\st f\st a}"] \& {{g\st f \st Y}} \arrow[ld, Rightarrow,twoiso,"{}", shorten <=2.5ex, shorten >=3ex,pos=0.4] \arrow[d,"{}", simeq]\\
				{(\wt{f\c g})\st X} \arrow[r,"{(\wt{f\c g})\st a}", ""'{name=A}] \arrow[r,"{(\wt{f\c g})\st b}"', bend right=65,""{name=B}] \arrow[Rightarrow,from=A, to=B,"{w_{\widetilde{f\c g}}}"{}, shorten >=-0.6ex, shorten <=-0.3ex]\& {(\wt{f\c g})\st Y.}
			\end{cdsN}
			= 
			\begin{cdsN}{7}{7}
				{g\st f\st X} \arrow[d,"{}", simeq] \arrow[r,"{g\st f\st a}",""'{name=D}, bend left=65]  \arrow[r,"{g\st f\st b}"',""'{name=E}] \arrow[Rightarrow,from=D, to=E,"{g\st w_{f}}"{}, shorten >=0.3ex]\& {g\st f\st Y} \arrow[ld, Rightarrow,twoiso,"{}", shorten <=2.5ex, shorten >=3ex,pos=0.4] \arrow[d,"{}", simeq] \\
				{(\wt{f\c g})\st X} \arrow[r,"{(\wt{f\c g})\st b}"']\& {(\wt{f\c g})\st Y}
		\end{cdsN}
	\end{eqD*}
For every 2-cell $\Gamma\: f \Rightarrow f'$, the following is an equality of pasting diagrams
\begin{eqD}{match2cells}
	\begin{cdsN}{7}{7}
		{f\st X} \arrow[d,"{F(\Gamma)_X}"',] \arrow[r,"{f\st a}",""'{name=D}, bend left=65]  \arrow[r,"{f\st b}"',""'{name=E}] \arrow[Rightarrow,from=D, to=E,"{w_{f}}"{}, shorten >=0.3ex]\& {\st f\st Y} \arrow[ld, Rightarrow,twoiso,"{}", shorten <=2.5ex, shorten >=3ex,pos=0.4] \arrow[d,"{F(\Gamma)_Y}"] \\
		{{f'}\st X} \arrow[r,"{{f'}\st b}"']\& {{f'}\st Y}
	\end{cdsN}
	\h[3]=\h[5] 
	\begin{cdsN}{7}{7}
		{f \st X} \arrow[d,"{F(\Gamma)_X}"',] \arrow[r,"{f\st a}"] \& {{f \st Y}} \arrow[ld, Rightarrow,twoiso,"{}", shorten <=2.5ex, shorten >=3ex,pos=0.4]  \arrow[d,"{F(\Gamma)_Y}"]\\
		{{f'}\st X} \arrow[r,"{{f'}\st a}", ""'{name=A}] \arrow[r,"{{f'}\st b}"', bend right=65,""{name=B}] \arrow[Rightarrow,from=A, to=B,"{w_{f'}}"{}, shorten >=-0.6ex, shorten <=-0.3ex]\& {{f'}\st Y}
	\end{cdsN}
\end{eqD}

An \dfn{amalgamation} of such a matching family is a 2-cell $w\: a\Rightarrow b$ in $F(C)$ such that, for every $f\: D\to C$ in $S$,  we have $f\st w=w_f.$
\end{defne}

\begin{oss}
	The condition of diagram \ref{match2cells} is given by the fact that the bisieve is two-dimensional and so it contains 2-cells. For this reason, it does not have a counterpart in the usual one-dimensional notion of matching family.
\end{oss}

\begin{prop}\label{sheaveson2cells}
	Let $C\in \K$ and let $S\: R \Rightarrow \HomC{\K}{-}{C}$ be a bisieve on it. The pseudofunctor
	$$(-\c S)\c \Gamma\: F(C) \longrightarrow \Tricat(\K\op, \Bicat)(S, F),$$
	is fully-faithful on 2-cells if and only if every matching family for $S$ of  2-cells of $F$ has a unique amalgamation.
	\end{prop}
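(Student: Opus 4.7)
The plan is to unpack the definition of a perturbation $p\: (m^a \c S) \aP{} (m^b \c S)$ between the trimodifications associated under Yoneda to morphisms $a,b\: X \to Y$ in $F(C)$, and to show that these perturbations are in natural bijection with matching families for $S$ of 2-cells of $F$, with the bijection turning amalgamations into preimages of the map induced by $(-\c S)\c \Gamma$ on 2-cells.

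First I would recall, using Remark \ref{assYonedatricat}, that under $\Gamma$ a morphism $a\: X \to Y$ in $F(C)$ corresponds to the trimodification $m^a$ with $((m^a)_D)_f = F(f)(a)$ for every $f\: D \to C$, and a 2-cell $\alpha\: a \Rightarrow b$ corresponds to the perturbation $p^\alpha$ with $((p^\alpha)_D)_f = F(f)(\alpha)$. Precomposition with $S$ restricts these data to morphisms $f$ lying in $S$, producing the trimodifications $m^a \c S$, $m^b \c S$ and the perturbation $p^\alpha \c S$ governed by the same explicit formulas.

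Next I would analyse a general perturbation $p\: (m^a \c S) \aP{} (m^b \c S)$ via Example \ref{ourpert}. Its component at $D \in \K$ is a modification between the pseudonatural transformations $(m^a \c S)_D$ and $(m^b \c S)_D$, whose components at $f \in S(D)$ are $F(f)(a)$ and $F(f)(b)$ respectively. Thus $p$ reduces to a family of 2-cells $(p_D)_f\: F(f)(a) \Rightarrow F(f)(b)$ indexed by $f \in S$. The modification axiom for each $p_D$ encodes compatibility with 2-cells $\Gamma\: f \Rightarrow f'$ inside $S(D)$ and reproduces equation \refd{match2cells}; the perturbation axiom relates $(p_D)_f$ and $(p_E)_{\wt{f \c g}}$ along every $g\: E \to D$ in $\K$, reproducing the other compatibility condition of the matching family definition. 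Setting $w_f = (p_D)_f$ for $f\: D \to C$ in $S$ thus yields a natural bijection between perturbations $(m^a \c S) \aP{} (m^b \c S)$ and matching families $\{w_f\}_{f \in S}$ of 2-cells of $F$ with $w_f\: f\st a \Rightarrow f\st b$.

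Finally I would conclude. Under this bijection, the perturbation $p^\alpha \c S$ arising from a 2-cell $\alpha\: a \Rightarrow b$ in $F(C)$ corresponds to the matching family $\{f\st \alpha\}_{f \in S}$, so the map on 2-cells induced by $(-\c S)\c \Gamma$ is $\alpha \mapsto \{f\st \alpha\}_{f\in S}$. By definition, amalgamations of a matching family $\{w_f\}$ are the 2-cells in its preimage under this map, so its surjectivity amounts to the existence of amalgamations and its injectivity to their uniqueness, which together give the stated characterization. The main obstacle is the book-keeping in the third step: verifying that the axioms of Example \ref{ourpert}, which involve all of the coherence data of $\sigma^X$, $\sigma^Y$, $m^a$, $m^b$ and of the pseudonaturality of $S$, reduce to exactly the two compatibility equations of the matching family definition. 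This is mechanical but lengthy, because it requires simplifying composites of structural isomorphisms using the explicit formulas of Remark \ref{assYonedatricat} for the Yoneda images.
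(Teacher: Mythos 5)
Your proposal is correct and follows essentially the same route as the paper's own proof: unpack a perturbation between $m^a\star S$ and $m^b\star S$ into the family of 2-cells $(p_D)_f\: f\st a \Rightarrow f\st b$, identify the perturbation axiom with the composition-compatibility of a matching family and the modification axiom of each $p_D$ with the 2-cell-compatibility condition \refd{match2cells}, and then observe that being the image of a (unique) 2-cell $w$ under $(-\c S)\c\Gamma$ is exactly the existence (and uniqueness) of an amalgamation. The book-keeping you flag at the end is likewise left at the same level of detail in the paper.
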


\begin{proof}
	Let $a,b\: X\to Y$ be morphisms in $F(C)$. The images of $a$ and $b$ under the pseudofunctor $(-\c S)\c \Gamma$ are respectively $m^a\star S\: \sigma^X \c S \aM{} \sigma^Y \c S$ and \linebreak $m^b\star S\: \sigma^X \c S \aM{} \sigma^Y \c S$, where the trimodifications $m^a\: \sigma^X \Rightarrow \sigma^{Y}$ and $m^b\: \sigma^X \Rightarrow \sigma^{Y}$ are the ones given by the biequivalence of the Tricategorical Yoneda lemma (see Remark \ref{assYonedatricat}). 
	
	Let then $q\: m^a\star S \aP{} m^b\star S$ be a perturbation. This is a perturbation of the kind described in Example \ref{ourpert}. Then $q$ is given by a family of modifications $q_D\:(m^a\star S)_D \aM{} (m^b\star S)_D$. Notice that the component of $(m^a\star S)_D$ on a morphism $D\ar{f}C$ in $R(D)$ is equal to $((m^a)_D)_f$ and so it is $f\st a$ and the same holds for $m^b$. So the component of $q_D$ on $D\ar{f}C$ in $S$ is a 2-cell 
	$$(q_D)_f\: f\st a \Rightarrow f\st b$$ 
	Moreover, the axiom of perturbation for $q$ for the morphism $E\ar{g}D$ computed on  $D\ar{f} C\in R(D)$ gives the following equality of 2-cells:
	\begin{eqD*}
		\begin{cdsN}{7}{7}
			{g\st f \st X} \arrow[d,"{}", simeq] \arrow[r,"{g\st f\st a}"] \& {{g\st f \st X}} \arrow[ld, Rightarrow,twoiso,"{((m^a)_g)_f}"{inner sep=0.2ex}, shorten <=2.5ex, shorten >=3ex,pos=0.4] \arrow[d,"{}", simeq]\\
			{(\wt{f\c g})\st X} \arrow[r,"{(\wt{f\c g})\st a}", ""'{name=A}] \arrow[r,"{(\wt{f\c g})\st b}"', bend right=65,""{name=B}] \arrow[Rightarrow,from=A, to=B,"{(q_D)_{\widetilde{f\c g}}}"{}, shorten >=-0.6ex, shorten <=-0.3ex]\& {(\wt{f\c g})\st Y.}
		\end{cdsN}
		= 
		\begin{cdsN}{7}{7}
			{g\st f\st X} \arrow[d,"{}", simeq] \arrow[r,"{g\st f\st a}",""'{name=D}, bend left=65]  \arrow[r,"{g\st f\st b}"',""'{name=E}] \arrow[Rightarrow,from=D, to=E,"{g\st (q_D)_{f}}"{}, shorten >=0.3ex]\& {g\st f\st Y} \arrow[ld, Rightarrow,twoiso,"{((m^b)_g)_f}"{inner sep=0.2ex}, shorten <=2.5ex, shorten >=3ex,pos=0.4] \arrow[d,"{}", simeq] \\
			{(\wt{f\c g})\st X} \arrow[r,"{(\wt{f\c g})\st b}"']\& {(\wt{f\c g})\st Y}
		\end{cdsN}
	\end{eqD*}
	where the 2-cells $((m^a)_g)_f$ and $((m^b)_g)_f$ coincide with the structure 2-cells given by the fact that $F$ is a trihomomorphism by the Tricategorical Yoneda lemma. So the data of the perturbation $q$ are exactly the same data of a matching family for $S$ of 2-cells of $F$, where the assignment $w_f$ is the 2-cell $(q_D)_f$, the compatibility condition with respect to composition is given by the axiom of perturbation and the compatibility condition with respect to 2-cells is given by the fact that $(q_D)$ is a modification for every $D\in \K$. 
	
	It suffice, then, to prove that the perturbation $q$ is the image under $(-\c S)\c \Gamma$ of a unique 2-cell $w\: a \Rightarrow b$ in $F(C)$ exactly when the corresponding matching family of 2-cells has a unique amalgamation (that is the same 2-cell $w\: a \Rightarrow b$).
	
	If $q$ is the image under $(-\c S)\c \Gamma$ of $w$, then there exists a 2-cell $w\: a \Rightarrow b$ such that  for every $D\in K$ and every $D\ar{f}C\in R(D)$ we have $(q_D)_f=f\st w$. This happens exactly when the corresponding matching family has an amalgamation. And the uniqueness of such a 2-cell whose image is $q$ corresponds to the uniqueness of the amalgamation. 
\end{proof}

We now consider the gluing condition on morphisms. We expect to obtain a condition similar to the effectiveness of descent data that we have on objects for usual stacks and this is what happens.

We give the following definition.

\begin{defne}\label{descentdatumonmorphisms}
	Let $F\: \K\op \to \Bicat$ be a trihomomorphism and let $S$ be a bisieve on $C\in \K$. 
	
	A \dfn{descent datum for $S$ of morphisms of $F$} is an assignment for every morphism $D\ar{f} C$ in $S$ of a morphism  $w_f\: f\st X \ar{} f\st Y $ in $F(D)$, for every pair of composable morphisms $D'\ar{g}D\ar{f}C$
	with $f\in S$, of an invertible 2-cell
	\begin{eqD*}
			\begin{cdN}
				{g\st f\st X}  \arrow[r,"{g\st w_f}"] \arrow[d,"{}",simeq] \&[5ex] {g\st f\st Y} \arrow[d,"{}",simeq] \arrow[ldd,"{\phi^{f,g}}",twoiso, shorten <=3.5ex, shorten >=3.5ex]\\[-5ex]
				{(f\c g)\st X} \arrow[d,"{}",simeq]\&[5ex]{(f\c g)\st Y} \arrow[d,"{}",simeq] \\[-5ex]
				{(\widetilde{f\c g})\st X} \arrow[r,"{w_{\widetilde{f\c g}}}"'] \&[5ex]{(\widetilde{f\c g})\st Y} 
		\end{cdN}
	\end{eqD*}
and for every 2-cell $\gamma\: f\Rightarrow f'$ with $f,f'\: D\to C$ in $S$ of an invertible 2-cell in $F(D)$
\begin{eqD*}\scalebox{0.9}{
		\begin{cdN}
			{f\st X} \arrow[r,"{w_f}"] \arrow[d,"{F(\gamma) _X}"']\& {f \st Y} \arrow[d,"{{F(\gamma) _Y}}"] \arrow[ld,"{\eta_{\gamma}}",twoiso, shorten <= 2ex, shorten >= 2ex]\\
			{{f'}\st X} \arrow[r,"{w_{f'}}"'] \& {{f'}\st Y}
	\end{cdN}}
\end{eqD*}
These data need to satisfy the following compatibility conditions.

Given $f\: D\to C\in S$, the isomorphic 2-cell $\phi^{f,\id{D}}$ coincides with the isomorphic 2-cell 
\begin{cd}
	{f\st X} \arrow[r,"{}",aeq] \arrow[r,"{}",iso, shift left=-5ex]\arrow[d,"{w_f}"']\& {\id{D}\st (f\st X)}\arrow[d,"{\id{D}\st w_f}"]\\
	{f\st Y} \arrow[r,"{}",aeq]\& {\id{D}\st (f\st X)}
\end{cd}
given by the fact that $F$ is a trihomomorphism.

Given morphisms $D''\ar{h} D' \ar{g}  D\ar{f} C$
with $f\in S$ the given 2-cells need to satisfy the following equality, called \dfn{cocycle condition on morphisms}:
\begin{samepage}
\begin{eqD*}\scalebox{0.8}{
		\begin{cdsN}{7}{3}
			{(\wt{\wt{f\c g}\c h})\st X} \arrow[rr,"{}",aiso] \arrow[d,"{}",aiso] \arrow[rd,"{w_{\wt{\wt{f\c g}\c h}}}"]\& {} \& {(\wt{f\c g}\c h)\st X} \arrow[ld,"{\phi^{\wt{f\c g},h}}", Rightarrow ,shorten <=2.5ex, shorten >= 2.5ex, shift left=2ex] \arrow[r,"{}",aeq] \& {h\st(\wt{f\c g})\st X} \arrow[d,"{h\st w_{\wt{f\c g}}}"] \arrow[r,"{}",aiso] \& {h\st({f\c g})\st X}  \arrow[r,"{}",aeq] \& {h\st g\st f\st X} \arrow[dd,"{h\st g\st w_f}"] \arrow[lld,"{h\st \phi^{f,g}}", Rightarrow ,shorten <=8.5ex, shorten >= 8.5ex, shift left=4ex]\\
			{(\wt{{f\c g}\c h})\st X} \arrow[r,"{}",iso , color=blue, shift left=-2ex] \arrow[rdd,"{w_{\wt{f\c g\c h}}}"']\& {(\wt{\wt{f\c g}\c h})\st Y} \arrow[dd,"{}", aiso]\arrow[rd,"{}", aiso] \& {} \& {h\st({f\c g})\st Y} \arrow[rd,"{}",aiso,""{name=A}]  \& {} \& {}\\
			{} \& {} \& {(\wt{f\c g}\c h)\st Y} \arrow[rr,"{}",iso, color=blue]\arrow[ru,"{}",aeq] \arrow[rd,"{}", aeq] \& {} \& {h\st (f\c g)\st Y} \arrow[r,"{}",aeq]\& {h\st g\st f\st Y} \arrow[d,"{}", aeq]\\
			{} \& {{(\wt{f\c g}\c h)\st Y}} \arrow[rr,"{}",aeq] \& {} \& {(f\c g\c h)\st Y} \arrow[ru,"{}", aeq] \arrow[rr,"{}", aeq]\& {} \& {(g\c h)\st f\st Y}
	\end{cdsN}}
\end{eqD*}
\begin{eqD}{cocyclemor}
	\vspace*{-1.8ex}\hspace*{5.7cm}
	\begin{cdsN}{3}{3}
		\hphantom{.}\arrow[d,equal]\\
		\hphantom{.}
	\end{cdsN}\hspace{5.4cm}
\end{eqD}
\begin{eqD*}\scalebox{0.8}{
		\begin{cdsN}{9}{7}
			{(\wt{\wt{f\c g}\c h})\st X} \arrow[r,"{}",aiso] \arrow[d,"{}",aiso] \& {(\wt{f\c g}\c h)\st X} \arrow[rr,"{}",iso ,shift left=-5ex, color=blue] \arrow[rd,"{}",aiso] \arrow[r,"{}",aeq] \& {h\st(\wt{f\c g})\st X}  \arrow[r,"{}",aiso] \& {h\st({f\c g})\st X}  \arrow[r,"{}",aeq] \& {h\st g\st f\st X} \arrow[d,"{h\st g\st w_f}"] \\
			{(\wt{f\c g\c h})\st X} \arrow[rr,"{}",aeq] \arrow[rd,"{w_{\wt{f\c g\c h}}}"']\& {} \& {({f\c g\c h})\st X} \arrow[ru,"{}",aeq] \arrow[r,"{}",aiso]\& {(g\c h)\st f \st X} \arrow[lld,"{\phi^{f,g\c h}}", Rightarrow ,shorten <=10.5ex, shorten >= 10.5ex]\arrow[ru,"{}",aeq] \arrow[rd,"{(g\c h)\st w_f}"'] \arrow[r,"{}",iso, color=blue] \& {h\st g\st f\st Y} \arrow[d,"{}", aiso] \\
			{} \& {(\wt{f\c g\c h})\st Y} \arrow[r,"{}",aiso]  \& {({f\c g\c h})\st Y} \arrow[rr,"{}", aeq] \& {} \& {(g\c h)\st f\st Y} 
	\end{cdsN}}
\end{eqD*}
\end{samepage}
where the unnamed arrows are the canonical isomorphism or equivalences and the isomorphic 2-cells in blue are given by the interchange law.

Given $D\ar{f} C\in S$, the following equality of 2-cells holds:
\begin{eqD*}
	\csq*[l][7][7][\eta_{id_f}][2.5]{f\st X}{f\st Y}{f\st X}{f\st Y}{w_f}{F(\id{f})_X}{F(\id{f})_Y}{w_f}
	\h[3]= \h[3]
	\begin{cdN}
		{f\st X} \arrow[r,"{w_f}"] \arrow[d,"{F(\id{f})_X}"', ""{name=A},bend right=60]\arrow[d,"{}",equal,""'{name=B}] \arrow[from=A,to=B,"{}",iso]\& {f\st Y} \arrow[d,"{}",equal,""{name=C}] \arrow[d,"{F(\id{f})_Y}",""'{name=D}, bend left=60] \arrow[from=C, to=D,"{}", iso]\\
		{f\st X} \arrow[r,"{w_f}"]\& {f\st Y}
	\end{cdN}
\end{eqD*}

Given morphisms $f,f',f''\: D\to C$ in $S$ and 2-cells $\gamma\: f\Rightarrow f'$ and $\delta\: f'\Rightarrow f''$, the following equality of 2-cells holds:
	\begin{eqD*}
		\csq*[l][7][7][\eta_{\delta \c \gamma}][2.5]{f\st X}{f\st Y}{{f''}\st X}{{f''}\st Y}{w_f}{F(\delta\c \gamma)_X}{F(\delta\c \gamma)_Y} {w_{f''}}
		\h[3] = \h[3]
		\begin{cdN}
			{f\st X}\arrow[dd,"{F(\delta \c \gamma)_X}"', bend right=60,""{name=G}] \arrow[r,"{w_f}"] \arrow[d,"{F(\gamma)_X}"']\& {f\st Y} \arrow[d,"{F(\gamma)_Y}"] \arrow[dd,"{F(\delta \c \gamma)_Y}", bend left=60, ""'{name=F}] \arrow[ld,"{\eta_{\gamma}}", Rightarrow ,shorten <=3.5ex, shorten >= 3.5ex]\\
			{{f'}\st X} \arrow[from=G,"{}",iso] \arrow[d,"{F(\delta)_X}"']\arrow[r,"{w_{f'}}"'] \& {{f'}\st Y} \arrow[d,"{F(\delta)_Y}"] \arrow[to=F,"{}", iso] \arrow[ld,"{\eta_{\delta}}", Rightarrow ,shorten <=3.5ex, shorten >= 3.5ex]\\
			{{f''}\st X} \arrow[r,"{w_{f''}}"'] \& {{f''}\st Y}
		\end{cdN}
	\end{eqD*}

Given morphisms $f,f'\: D\to C$ in $S$, a 2-cell $\gamma\: f \Rightarrow f'$ and a morphism $g\: E\to D$ in $\K$, the following equality of 2-cells holds:

\begin{samepage}
	\begin{eqD*}
	\begin{cdsN}{6}{6}
		{} \& {(f\c g)\st X} \arrow[d,"{\phi^{f,g}}", Rightarrow ,shorten <=1.5ex, shorten >= 1.5ex, shift left=9ex] \arrow[r,"{}",aeq]\& {g\st f\st X}\arrow[rd,"{g\st w_f}"] \& {} \\
		{(\wt{f\c g})\st X} \arrow[d,"{F(R(g)(\gamma))_X}"'] \arrow[ru,"{}",aiso] \arrow[r,"{w_{\wt{f\c g}}}"']\& {(\wt{f\c g})\st Y} \arrow[ld,"{\eta_{R(g)(\gamma)}}"', Rightarrow,shorten <=3.5ex, shorten >= 3.5ex]\arrow[d,"{F(R(g)(\gamma))_Y}"{description},""'{name=R}] \arrow[r,"{}",aiso] \& {(f\c g)\st Y} \arrow[d,"{F(\gamma \star g)_Y}"{description},""'{name=S}] \arrow[r,"{}",aeq] \& {g\st f \st Y} \arrow[d,"{g\st (F(\gamma)_Y)}",""'{name=P}] \\
		{(\wt{f'\c g})\st X} \arrow[r,"{w_{\wt{f'\c g}}}"']\& {(\wt{f'\c g})\st Y} \arrow[r,"{}",aiso] \& {(f'\c g)\st Y} \arrow[r,"{}",aeq]\& {g\st {f '}\st Y} \arrow[from=R, to=S,"{}",iso] \arrow[from=S, to=P,"{}",iso]
	\end{cdsN}
\end{eqD*}
\vspace{-9.5mm}
\begin{eqD}{phicon2cella}
	\hspace{4ex}
	\begin{cdN}
		{\hphantom{.}} \arrow[d,"{}", equal ,shorten <=2.5ex, shorten >= 2.5ex]\\
		{\hphantom{.}}
	\end{cdN}
\end{eqD}
\vspace{-9.5mm}
\begin{eqD*}
	\begin{cdsN}{6}{6}
		{(\wt{f\c g})\st X} \arrow[r,"{}",aiso] \arrow[d,"{F(R(g)(\gamma))_X}"',""{name=A}] \& {(f\c g)\st X} \arrow[r,"{}",aeq] \arrow[d,"{F(\gamma \star g)_X}"'{description},""'{name=B}] \arrow[from=A,to=B,"{}",iso]\& {g\st f\st X} \arrow[r,"{g\st w_f}"] \arrow[d,"{g\st(F(\gamma)_X)}"{description},""{name=C}] \& {g\st f \st Y} \arrow[ld,"{g\st \eta_{\gamma}}", Rightarrow ,shorten <=3.5ex, shorten >= 3.5ex] \arrow[d,"{g\st(F(\gamma)_Y)}"]\\
		{(\wt{f'\c g})\st X} \arrow[rd,"{w_{\wt{f'\c g}}}"'] \arrow[r,"{}",aeq]\& {(f'\c g)\st X} \arrow[d,"{\phi^{f',g}}", Rightarrow ,shorten <=1.5ex, shorten >= 1.5ex, shift left=9ex]\arrow[r,"{}",aeq] \& {g\st {f'}\st X} \arrow[r,"{}",aeq] \& {g\st {f'} \st Y} \\
		{} \& {(\wt{f'\c g})\st Y} \arrow[r,"{}",aiso] \arrow[from=B,to=C,"{}",iso]\& {(f'\c g)\st Y} \arrow[ru,"{}",aeq] \& {}
	\end{cdsN}
\end{eqD*}
\end{samepage}

 Given $f\: D\to C$ in $S$, morphisms $g,g'\: E \to D$ in $\K$ and a 2-cell $\delta\: g\Rightarrow g'$, the following equality of 2-cells holds:
\begin{samepage}
	\begin{eqD*}
	\begin{cdsN}{6}{6}
		{} \& {(f\c g)\st X} \arrow[d,"{\phi^{f,g}}", Rightarrow ,shorten <=1.5ex, shorten >= 1.5ex, shift left=9ex] \arrow[r,"{}",aeq]\& {g\st f\st X}\arrow[rd,"{g\st w_f}"] \& {} \\
		{(\wt{f\c g})\st X} \arrow[d,"{F(\wt{f\star \delta})_X}"'] \arrow[ru,"{}",aiso] \arrow[r,"{w_{\wt{f\c g}}}"']\& {(\wt{f\c g})\st Y} \arrow[ld,"{\eta_{\wt{f\star \delta}}}"', Rightarrow,shorten <=3.5ex, shorten >= 3.5ex]\arrow[d,"{F(\wt{f\star \delta})_Y}"{description},""'{name=R}] \arrow[r,"{}",aiso] \& {(f\c g)\st Y} \arrow[d,"{F(f\star \delta)_Y}"{description},""'{name=S}] \arrow[r,"{}",aeq] \& {g\st f \st Y} \arrow[d,"{F(\delta)(f\st Y)}",""'{name=P}] \\
		{(\wt{f\c g'})\st X} \arrow[r,"{w_{\wt{f\c g'}}}"']\& {(\wt{f\c g'})\st Y} \arrow[r,"{}",aiso] \& {(f\c g')\st Y} \arrow[r,"{}",aeq]\& {{g'}\st {f }\st Y} \arrow[from=R, to=S,"{}",iso] \arrow[from=S, to=P,"{}",iso]
	\end{cdsN}
\end{eqD*}
\vspace{-4.5mm}
\begin{eqD}{phigg'}
	\hspace{4ex}
	\begin{cdN}
		{\hphantom{.}} \arrow[d,"{}", equal ,shorten <=2.5ex, shorten >= 2.5ex]\\
		{\hphantom{.}}
	\end{cdN}
\end{eqD}
\vspace{-4.5mm}
\begin{eqD*}
	\begin{cdsN}{6}{6}
		{(\wt{f\c g})\st X} \arrow[r,"{}",aiso]  \arrow[d,"{F(\wt{f\star \delta})_X}"',""{name=A}] \& {(f\c g)\st X} \arrow[r,"{}",aeq] \arrow[d,"{F(\wt{f\star \delta})_Y}"'{description},""{name=B}]  \arrow[from=A,to=B,"{}",iso]\& {g\st f\st X} \arrow[r,"{g\st w_f}"] \arrow[d,"{F(\delta)(f\st X)}"{description},""{name=C}] \& {g\st f \st Y}  \arrow[d,"{F(\delta)(f\st Y)}",""{name=D}]\\
		{(\wt{f\c g'})\st X} \arrow[rd,"{w_{\wt{f\c g'}}}"'] \arrow[r,"{}",aeq]\& {(f\c g')\st X} \arrow[d,"{\phi^{f,g'}}", Rightarrow ,shorten <=1.5ex, shorten >= 1.5ex, shift left=9ex]\arrow[r,"{}",aeq] \& {{g'}\st {f}\st X} \arrow[r,"{}",aeq] \& {{g'}\st {f} \st Y} \\
		{} \& {(\wt{f\c g'})\st Y} \arrow[r,"{}",aiso] \arrow[from=B,to=C,"{}",iso] \arrow[from=C,to=D,"{}",iso]\& {(f\c g')\st Y} \arrow[ru,"{}",aeq] \& {}
	\end{cdsN}
\end{eqD*}
\end{samepage}
\vspace{4.5mm}

	This descent datum on morphisms is  \dfn{effective} if there exists a morphism \linebreak[4] $w\: X\ar{} Y$ in $F(C)$ and, for every morphism $D\ar{f}C\in S$, an invertible 2-cell
\begin{eqD*}
	\begin{cdN}
		f\st X \arrow[r,"{f\st w}", bend left=30, ""'{name=A}] \arrow[r,"{w_f}"', bend right=30,""{name=B}] \arrow[twoiso,from=A, to=B,"\psi^{f}", shorten >=-0.6ex, shorten <=-0.3ex]\&  f\st Y
	\end{cdN}
\end{eqD*}
such that given $f,f'\: D\to C$ in $S$ and a 2-cell $\gamma\: f\Rightarrow f'$, the following 2-cells are equal:
\begin{eqD}{comp2cells}
	\begin{cdN}
		{f\st X} \arrow[r,"{}"' ,iso, shift left=-6ex] \arrow[r,"{f\st w}"',""{name=P}] \arrow[d,"{F(\gamma)_X}"'] \arrow[r,"{w_f}", ""{name=O}, bend left= 60] \arrow[from=O,to=P,"{\psi^f}", Leftarrow ,shorten <=0.5ex, shorten >= 0.5ex]\& {f\st Y}  \arrow[d,"{F(\gamma)_Y}"]\\
		{{f'}\st Y} \arrow[r,"{{f'}\st w}"']\& {{f'}\st X}
	\end{cdN}
\h[-3]
	\begin{cdN}
		{\hphantom{.}}  \arrow[r,""'{inner sep= 1ex}, equal ,shorten <=2.5ex, shorten >= 2.5ex]\& {\hphantom{.}}
	\end{cdN}
\h[-3]
	\begin{cdN}
		{f\st X} \arrow[r,"{}"' ,iso, shift left=-6ex]  \arrow[d,"{F(\gamma)_X}"'] \arrow[r,"{w_f}", ""{name=O}] \& {f\st Y}  \arrow[d,"{F(\gamma)_Y}"]\\
		{{f'}\st X} \arrow[r,"{{f'}\st w}"', ""{name=R}, bend right= 60] \arrow[r,"{w_{f'}}",""{name=Q}]\arrow[from=R,to=Q,"{\psi^{f'}}"', Rightarrow ,shorten <=0.5ex, shorten >= 0.5ex]\& {{f'}\st Y} 	\end{cdN}
	\end{eqD}
Moreover, given morphisms $E\ar{g} D \ar{f} C$ with $f\in S$, the following equality of 2-cells holds:
\begin{eqD*}
	\begin{cdN}
		{} \& {(f\c g)\st X} \arrow[r,"{}",aeq] \& {g\st f\st X} \arrow[dd,"{\phi^{f,g}}", Rightarrow ,shorten <=6.5ex, shorten >= 6.5ex,shift left=-8ex] \arrow[rd,"{g\st f\st w}",""'{name=A}, bend left=40] \arrow[rd,"{g\st w_f}"',""{name=B}, bend right=40] \arrow[from=A,to=B,"{\psi^f}", Rightarrow ,shorten <=1.5ex, shorten >= 1.5ex] \& {} \\
		{(\wt{f\c g})\st X} \arrow[rd,"{w_{\wt{f\c g}}}"] \arrow[ru,"{}",aiso] \& {} \& {} \& {g\st f\st Y} \\
		{} \& {(\wt{f\c g})\st Y} \arrow[r,"{}",aiso] \& {(f\c g)\st Y} \arrow[ru,"{}",aeq] \& {} 
	\end{cdN}
\end{eqD*}
\vspace{-6.5mm}
\begin{eqD}{condessmor}
	\begin{cdN}
		{\hphantom{.}} \arrow[d,"{}",equal ,shorten <=2.5ex, shorten >= 2.5ex, shift left=3ex]\\
		{\hphantom{.}}
	\end{cdN}
\end{eqD}
\vspace{-6.5mm}
\begin{eqD*}
	\begin{cdN}
		{} \& {(f\c g)\st X} \arrow[r,"{}",aeq] \& {g\st f\st X} \arrow[rd,"{g\st f\st w}"] \& {} \\
		{(\wt{f\c g})\st X} \arrow[rrr,"{}",iso] \arrow[rd,"{{(\wt{f\c g})\st w}}",""'{name=A}, bend left=40] \arrow[rd,"{w_{\wt{f\c g}}}"',""{name=B}, bend right=40] \arrow[from=A, to=B,"{\psi^{\wt{f\c g}}}", Rightarrow ,shorten <=1.5ex, shorten >= 1.5ex] \arrow[ru,"{}",aiso] \& {} \& {} \& {g\st f\st Y} \\
		{} \& {(\wt{f\c g})\st Y} \arrow[r,"{}",aiso] \& {(f\c g)\st Y} \arrow[ru,"{}",aeq] \& {} 
	\end{cdN}
\end{eqD*}
\end{defne}

\vspace{4.5mm}
We now make some comments about the previous definition, in order to compare it with the usual definition of descent datum.

\begin{oss}[cocycle condition on morphisms]
	Notice that the cocycle condition on morphisms (diagram (\ref{cocyclemor}) of the previous definition) relates $\phi^{\wt{f\c g},h}, h\st \phi^{f,g}$ and $\phi^{f,g\c h}$. Taking into account the pseudonaturality of the bisieve, this is the same data related by the usual cocycle of a descent datum (on objects). The only differences are that the cocycle condition on morphisms involves 2-cells instead of morphisms and that the canonical isomorphism given by the pseudofunctoriality becomes a canonical isomorphic 2-cell given by the fact that $F$ is a trihomomorphism.
\end{oss}

\begin{oss}[descent along the identity]
	In Definition \ref{descentdatumonmorphisms} we require a condition of normality of the isomorphisms of type $\phi^{f,\id{D}}$. This condition was not necessary in the case of descent data for objects as it was implied by the other conditions.
	\end{oss}

\begin{oss}[conditions on 2-cells]
	All the conditions regarding 2-cells were not involved in the notion of descent datum on objects because it was given for a 1-category. One could also give a definition of descent datum on objects in a 2-categorical context that involves conditions about 2-cells analogous to the ones of the previous definition. We are not interested in this notion as it is not involved in the theory of 2-stacks.
	\end{oss}

We are ready to prove our second characterization result.

\begin{prop}\label{effonmorphisms}
	Let $C\in \K$ and let $S\: R \Rightarrow \HomC{\K}{-}{C}$ be a bisieve on it. The pseudofunctor
	$$(-\c S)\c \Gamma\: F(C) \longrightarrow \Tricat(\K\op, \Bicat)(R, F)$$
	is essentially surjective on morphisms if and only if every descent datum for $S$ of morphisms of $F$ is effective.
\end{prop}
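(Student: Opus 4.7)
The proof proceeds by unpacking, via the Tricategorical Yoneda Lemma and the explicit description of trimodifications in Example \ref{ourtrimod}, the data of a trimodification between tritransformations of the form $\sigma^X \c S$ and $\sigma^Y \c S$, and identifying it with a descent datum on morphisms in the sense of Definition \ref{descentdatumonmorphisms}. This follows the same blueprint as Proposition \ref{sheaveson2cells} but promoted one categorical dimension. First I would note that, by Remark \ref{assYonedatricat}, a morphism $w\:X \to Y$ in $F(C)$ is sent by $(-\c S) \c \Gamma$ to the trimodification $m^w \star S$ whose component on $D\ar{f}C$ in $R(D)$ is $f\st w$; essential surjectivity on morphisms thus amounts to the statement that an arbitrary trimodification $m\:\sigma^X \c S \aM{} \sigma^Y \c S$ is related to some $m^w \star S$ by an invertible perturbation.

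Next I would start from such a trimodification $m$ and read off a descent datum. Its data consist, by Example \ref{ourtrimod}, of a pseudonatural transformation $m_D\:(\sigma^X \c S)_D \aR{} (\sigma^Y \c S)_D$ for each $D\in\K$ together with the compatibility modification $\wt{m}$. The object component of $m_D$ at $f\:D \to C$ in $R(D)$ is a morphism $f\st X \to f\st Y$ in $F(D)$, which I take as $w_f$; the 2-cell component of $m_D$ on $\gamma\:f \Rightarrow f'$ yields the invertible 2-cell $\eta_\gamma$; and the component of $\wt{m}$ on $g\:E \to D$ evaluated at $f\in R(D)$ yields, after absorbing the coherence 2-cells of $F$ as a trihomomorphism and the pseudonaturality isomorphisms $\sigma_{f,g}$ of $S$, the invertible 2-cell $\phi^{f,g}$.

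Then I would verify that the two trimodification axioms of Example \ref{ourtrimod} translate exactly into the axioms of a descent datum on morphisms: the first axiom, for a composable pair $(g,h)$ evaluated at $f\in S$, gives the cocycle condition \eqref{cocyclemor}, and specialising one of $g,h$ to an identity gives the normality of $\phi^{f,\id{D}}$; pseudonaturality of each $m_D$ on 2-cells produces the axioms governing $\eta_\gamma$; combining these with the naturality of the pseudonaturality squares in both $f$ and $g$ yields the compatibilities \eqref{phicon2cella} and \eqref{phigg'}. The inverse construction reassembles a descent datum into a trimodification by the same dictionary. Finally, an invertible perturbation $p\:m^w \star S \aP{} m$ is, by Example \ref{ourpert}, a family of invertible 2-cells $\psi^f\:f\st w \Rightarrow w_f$ for $f\in S$: the requirement that each component $p_D$ be a modification yields \eqref{comp2cells}, and the perturbation axiom for $g\:E \to D$ evaluated at $f\in S$ yields \eqref{condessmor}. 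Hence such an invertible perturbation exists iff the descent datum is effective, completing both directions.

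The main obstacle is the patient pasting-diagram bookkeeping required to match the intricate axioms of Examples \ref{ourtrimod} and \ref{ourpert}, each written as an equality of large pasting diagrams involving the structure data of the tritransformations $\sigma^X \c S$ and $\sigma^Y \c S$, against the diagrams \eqref{cocyclemor}, \eqref{phicon2cella}, \eqref{phigg'}, \eqref{comp2cells} and \eqref{condessmor} of Definition \ref{descentdatumonmorphisms}; in particular, one must track at every step how the coherence isomorphisms of the trihomomorphism $F$ and the pseudonaturality 2-cells $\sigma_{f,g}$ of $S$ get absorbed into the descent-datum 2-cells $\phi^{f,g}$ and $\eta_\gamma$.
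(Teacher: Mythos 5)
Your proposal follows the paper's proof essentially step for step: both unpack a trimodification $m\:\sigma^X\c S \aM{} \sigma^Y\c S$ via Example \ref{ourtrimod} into exactly the data and axioms of a descent datum for $S$ of morphisms of $F$ (with $w_f=(m_D)_f$, $\eta_\gamma=(m_D)_\gamma$ and $\phi^{f,g}$ the component of $\wt{m}$), and then identify an invertible perturbation relating $m$ to $m^w\star S$ with the effectiveness data $\psi^f=(p_D)_f$ satisfying (\ref{comp2cells}) and (\ref{condessmor}). The only minor discrepancy is that the paper extracts the normality of $\phi^{f,\id{D}}$ from the second (unit) axiom of trimodifications rather than by specialising the composition axiom to identities, but the dictionary is otherwise identical.
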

\begin{proof}
	Let $X,Y\in F(C)$. The images of $X$ and $Y$ under $(-\c S)\c \Gamma$ are respectively $\sigma^X \c S\: R \Rightarrow F$ and $\sigma^Y \c S\: R \Rightarrow F$, where $\sigma^X$ and $\sigma_Y$ are given as in Remark \ref{assYonedatricat}. 
	Let then $m\: \sigma^X \c S \aM{} \sigma^Y \c S$ be a trimodification. This is a trimodification of the kind described in Example \ref{ourtrimod}. Then $m$ is given by a family of pseudonatural transformations $m_D\: (\sigma^X \c S)_D \Rightarrow (\sigma^Y \c S)_D$ indexed by the objects of $\K$.  Notice that the image of a morphism $D\ar{f}C$ in $R(D)$ under the pseudofunctor $(\sigma^X \c S)_D $ is equal to $\sigma^X(f)$ and so it is $f\st X$ and the same holds for $(\sigma^Y \c S)_D$. So the component of $m_D$ on $f\: D \to C$ in $R(D)$ is a morphism
	$$(m_D)_f\: f\st X \to f\st Y$$
	The pseudonaturality of $m_D$ implies that, given morphisms $f,f'\in R(D)$ and a 2-cell $\gamma\: f \Rightarrow f'$, there is an invertible 2-cell
	\begin{eqD*}
			\begin{cdN}
				{f\st X} \arrow[r,"{(m_D)_f}"] \arrow[d,"{F(\gamma) _X}"']\& {f \st Y} \arrow[d,"{{F(\gamma) _Y}}"] \arrow[ld,"{{(m_D)}_{\gamma}}",twoiso, shorten <= 2ex, shorten >= 2ex]\\
				{{f'}\st X} \arrow[r,"{(m_D)_{f'}}"'] \& {{f'}\st Y}
		\end{cdN}
	\end{eqD*}
(Here we used that, by the tricategorical Yoneda lemma, $(\sigma^X)_D(\gamma)=F(\gamma)_X$ and analogously for $Y$.)
	And these structure 2-cells are pseudofunctorial so given \linebreak[4] $f\: D\to C$ in $R(D)$ we have 
	\begin{eqD}{norm}
		\csq*[l][7][7][(m_D)_{id_f}][2.5]{f\st X}{f\st Y}{f\st X}{f\st Y}{(m_D)_f}{F(\id{f})_X}{F(\id{f})_Y}{(m_D)_f}
		\h[3]= \h[3]
		\begin{cdN}
			{f\st X} \arrow[r,"{(m_D)_f}"] \arrow[d,"{F(\id{f})_X}"', ""{name=A},bend right=60]\arrow[d,"{}",equal,""'{name=B}] \arrow[from=A,to=B,"{}",iso]\& {f\st Y} \arrow[d,"{}",equal,""{name=C}] \arrow[d,"{F(\id{f})_Y}",""'{name=D}, bend left=60] \arrow[from=C, to=D,"{}", iso]\\
			{f\st X} \arrow[r,"{(m_D)_f}"']\& {f\st Y}
		\end{cdN}
	\end{eqD}

and given morphisms $f,f',f''\in R(D)$ and 2-cells $\gamma\: f \Rightarrow f'$ and $\delta\: f' \Rightarrow f''$ we have
	\begin{eqD*}
		\csq*[l][7][7][(m_D)_{\delta \c \gamma}][2.5]{f\st X}{f\st Y}{{f''}\st X}{{f''}\st Y}{(m_D)_f}{F(\delta\c \gamma)_X}{F(\delta\c \gamma)_Y} {(m_D)_{f''}}
		\h[3] = \h[3]
		\begin{cdN}
			{f\st X}\arrow[dd,"{F(\delta \c \gamma)_X}"', bend right=60,""{name=G}] \arrow[r,"{(m_D)_f}"] \arrow[d,"{F(\gamma)_X}"']\& {f\st Y} \arrow[d,"{F(\gamma)_Y}"] \arrow[dd,"{F(\delta \c \gamma)_Y}", bend left=60, ""'{name=F}] \arrow[ld,"{(m_D)_{\gamma}}", Rightarrow ,shorten <=3.5ex, shorten >= 3.5ex]\\
			{{f'}\st X} \arrow[from=G,"{}",iso] \arrow[d,"{F(\delta)_X}"']\arrow[r,"{(m_D)_{f'}}"'] \& {{f'}\st Y} \arrow[d,"{F(\delta)_Y}"] \arrow[to=F,"{}", iso] \arrow[ld,"{(m_D)_{\delta}}", Rightarrow ,shorten <=3.5ex, shorten >= 3.5ex]\\
			{{f''}\st X} \arrow[r,"{(m_D)_{f''}}"'] \& {{f''}\st Y}
		\end{cdN}
	\end{eqD*}
Furthermore, given  a morphism $g\: E \to D$ in $\K$, we have an invertible modification 
		\begin{eqD*}
		\begin{cdN}
			{S(D)} \arrow[r,"{(\sigma^Y\c S)_D}"',""{name=P}] \arrow[d,"{S(g)}"'] \arrow[r,"{(\sigma^X \c S)_D}", ""{name=O}, bend left= 60] \arrow[from=O,to=P,"{m_D}", Leftarrow ,shorten <=0.5ex, shorten >= 0.5ex]\& {F(D)}  \arrow[d,"{F(g)}"]\\
			{R(E)} \arrow[ru,"{(\sigma^Y\c S)_g}"' ,shorten <=3.5ex, shorten >= 3.5ex, Rightarrow] \arrow[r,"{(\sigma^Y\c S)_E}"']\& {F(E)} 
		\end{cdN}
		\begin{cdN}
			{\hphantom{.}}  \arrow[r,"{{m_g}}"'{inner sep= 1ex}, triple]\& {\hphantom{.}}
		\end{cdN}
		\begin{cdN}
			{S(D)}  \arrow[d,"{S(g)}"'] \arrow[r,"{(\sigma^X \c S)_D}", ""{name=O}] \& {F(D)}  \arrow[d,"{F(g)}"]\\
			{S(E)} \arrow[ru,"{(\sigma^X \c S)_g}"' ,shorten <=3.5ex, shorten >= 3.5ex, Rightarrow] \arrow[r,"{(\sigma^Y\c S)_E}"', ""{name=R}, bend right= 60] \arrow[r,"{(\sigma^X \c S)_E}",""{name=Q}]\arrow[from=R,to=Q,"{m_E}", Rightarrow ,shorten <=0.5ex, shorten >= 0.5ex]\& {F(E)} 	\end{cdN}
	\end{eqD*}
This means that, given morphisms $f,f'\: D\to C$ in $S$, a 2-cell $\gamma\: f \Rightarrow f'$ and a morphism $g\: E\to D$ in $\K$, the following equality holds:
\begin{samepage}
	\begin{eqD*}
	\begin{cdsN}{6}{6}
		{} \& {(f\c g)\st X} \arrow[d,"{(m_g)_f}", Rightarrow ,shorten <=1.5ex, shorten >= 1.5ex, shift left=9ex] \arrow[r,"{}",aeq]\& {g\st f\st X}\arrow[rd,"{g\st (m_D)_f}"] \& {} \\
		{(\wt{f\c g})\st X} \arrow[d,"{F(R(g)(\gamma))_X}"'] \arrow[ru,"{}",aiso] \arrow[r,"{m_E({\wt{f\c g}})}"]\& {(\wt{f\c g})\st Y} \arrow[ld,"{(m_E)_{R(g)(\gamma)}}"', Rightarrow,shorten <=3.5ex, shorten >= 3.5ex]\arrow[d,"{F(R(g)(\gamma))_Y}"{description},""'{name=R}] \arrow[r,"{}",aiso] \& {(f\c g)\st Y} \arrow[d,"{F(\gamma \star g)_Y}"{description},""'{name=S}] \arrow[r,"{}",aeq] \& {g\st f \st Y} \arrow[d,"{g\st (F(\gamma)_Y)}",""'{name=P}] \\
		{(\wt{f'\c g})\st X} \arrow[r,"{m_E({\wt{f'\c g}})}"']\& {(\wt{f'\c g})\st Y} \arrow[r,"{}",aiso] \& {(f'\c g)\st Y} \arrow[r,"{}",aeq]\& {g\st {f '}\st Y} \arrow[from=R, to=S,"{}",iso] \arrow[from=S, to=P,"{}",iso]
	\end{cdsN}
\end{eqD*}
\vspace{-4.5mm}
\begin{eqD*}
	\hspace{4ex}
	\begin{cdN}
		{\hphantom{.}} \arrow[d,"{}", equal ,shorten <=2.5ex, shorten >= 2.5ex]\\
		{\hphantom{.}}
	\end{cdN}
\end{eqD*}
\vspace{-4.5mm}
\begin{eqD*}
	\begin{cdsN}{6}{6}
		{(\wt{f\c g})\st X} \arrow[r,"{}",aiso] \arrow[d,"{F(R(g)(\gamma))_X}"',""{name=A}] \& {(f\c g)\st X} \arrow[r,"{}",aeq] \arrow[d,"{F(\gamma \star g)_X}"'{description},""'{name=B}] \arrow[from=A,to=B,"{}",iso]\& {g\st f\st X} \arrow[r,"{g\st (m_D)_f}"] \arrow[d,"{g\st(F(\gamma)_X)}"{description},""{name=C}] \& {g\st f \st Y} \arrow[ld,"{g\st (m_D)_{\gamma}}", Rightarrow ,shorten <=3.5ex, shorten >= 3.5ex] \arrow[d,"{g\st(F(\gamma)_Y)}"]\\
		{(\wt{f'\c g})\st X} \arrow[rd,"{m_E({\wt{f'\c g}})}"'] \arrow[r,"{}",aeq]\& {(f'\c g)\st X} \arrow[d,"{(m_g)_f'}", Rightarrow ,shorten <=1.5ex, shorten >= 1.5ex, shift left=9ex]\arrow[r,"{}",aeq] \& {g\st {f'}\st X} \arrow[r,"{}",aeq] \& {g\st {f'} \st Y} \\
		{} \& {(\wt{f'\c g})\st Y} \arrow[r,"{}",aiso] \arrow[from=B,to=C,"{}",iso]\& {(f'\c g)\st Y} \arrow[ru,"{}",aeq] \& {}
	\end{cdsN}
\end{eqD*}
\end{samepage}

The $m_g$'s for $g\: E\to D$ form a modification $\wt{m}$. So given $g,g'\: E \to D$ and a 2-cell $\delta\: g \Rightarrow g'$ the following equality holds:

\begin{samepage}
	\begin{eqD*}
		\begin{cdsN}{6}{6}
			{} \& {(f\c g)\st X} \arrow[d,"{(m_g)_f}", Rightarrow ,shorten <=1.5ex, shorten >= 1.5ex, shift left=9ex] \arrow[r,"{}",aeq]\& {g\st f\st X}\arrow[rd,"{g\st (m_D(f))}"] \& {} \\
			{(\wt{f\c g})\st X} \arrow[d,"{F(\wt{f\star \delta})_X}"'] \arrow[ru,"{}",aiso] \arrow[r,"{m_E({\wt{f\c g}})}"]\& {(\wt{f\c g})\st Y} \arrow[ld,"{(m_E)_{\wt{f\star \delta}}}"', Rightarrow,shorten <=3.5ex, shorten >= 3.5ex]\arrow[d,"{F(\wt{f\star \delta})_Y}"{description},""'{name=R}] \arrow[r,"{}",aiso] \& {(f\c g)\st Y} \arrow[d,"{F(f\star \delta)_Y}"{description},""'{name=S}] \arrow[r,"{}",aeq] \& {g\st f \st Y} \arrow[d,"{F(\delta)(f\st Y)}",""'{name=P}] \\
			{(\wt{f\c g'})\st X} \arrow[r,"{m_E({\wt{f\c g'}})}"']\& {(\wt{f\c g'})\st Y} \arrow[r,"{}",aiso] \& {(f\c g')\st Y} \arrow[r,"{}",aeq]\& {{g'}\st {f }\st Y} \arrow[from=R, to=S,"{}",iso] \arrow[from=S, to=P,"{}",iso]
		\end{cdsN}
	\end{eqD*}
	\vspace{-4.5mm}
	\begin{eqD*}
		\hspace{4ex}
		\begin{cdN}
			{\hphantom{.}} \arrow[d,"{}", equal ,shorten <=2.5ex, shorten >= 2.5ex]\\
			{\hphantom{.}}
		\end{cdN}
	\end{eqD*}
	\vspace{-4.5mm}
	\begin{eqD*}
		\begin{cdsN}{6}{6}
			{(\wt{f\c g})\st X} \arrow[r,"{}",aiso]  \arrow[d,"{F(\wt{f\star \delta})_X}"',""{name=A}] \& {(f\c g)\st X} \arrow[r,"{}",aeq] \arrow[d,"{F(\wt{f\star \delta})_Y}"'{description},""{name=B}]  \arrow[from=A,to=B,"{}",iso]\& {g\st f\st X} \arrow[r,"{g\st( m_D(f))}"] \arrow[d,"{F(\delta)(f\st X)}"{description},""{name=C}] \& {g\st f \st Y} \arrow[d,"{F(\delta)(f\st Y)}",""{name=D}]\\
			{(\wt{f\c g'})\st X} \arrow[rd,"{m_E({\wt{f\c g'}})}"'] \arrow[r,"{}",aeq]\& {(f\c g')\st X} \arrow[d,"{(m_{g'})_f}", Rightarrow ,shorten <=1.5ex, shorten >= 1.5ex, shift left=9ex]\arrow[r,"{}",aeq] \& {{g'}\st {f}\st X} \arrow[r,"{}",aeq] \& {{g'}\st {f} \st Y} \\
			{} \& {(\wt{f\c g'})\st Y} \arrow[r,"{}",aiso] \arrow[from=B,to=C,"{}",iso]\& {(f\c g')\st Y} \arrow[from=C,to=D,"{}",iso] \arrow[ru,"{}",aeq] \& {}
		\end{cdsN}
	\end{eqD*}
\end{samepage}
Moreover, the first axiom of trimodification for $m$ (see Example \ref{ourtrimod}) computed on component $f\: D\to C \in R(D)$ yield the following condition.

Given $D''\ar{h} D' \ar{g} D \ar{f} C$ with $f\in R(D)$, the following equality holds:
\begin{samepage}
	\begin{eqD*}\scalebox{0.8}{
			\begin{cdsN}{7}{3}
				{(\wt{\wt{f\c g}\c h})\st X} \arrow[rr,"{}",aiso] \arrow[d,"{}",aiso] \arrow[rd,"{m_{D''}({\wt{\wt{f\c g}\c h}})}"]\& {} \& {(\wt{f\c g}\c h)\st X} \arrow[ld,"{(m_h)(\wt{f\c g})}", Rightarrow ,shorten <=2.5ex, shorten >= 2.5ex, shift left=2ex] \arrow[r,"{}",aeq] \& {h\st(\wt{f\c g})\st X} \arrow[d,"{h\st (m_{D'}({\wt{f\c g}}))}"] \arrow[r,"{}",aiso] \& {h\st({f\c g})\st X}  \arrow[r,"{}",aeq] \& {h\st g\st f\st X} \arrow[dd,"{h\st g\st (m_D(f))}"] \arrow[lld,"{h\st( (m_g)_f)}", Rightarrow ,shorten <=8.5ex, shorten >= 8.5ex, shift left=4ex]\\
				{(\wt{{f\c g}\c h})\st X} \arrow[r,"{}",iso, shift left=-2ex] \arrow[rdd,"{m_{D''}({\wt{f\c g\c h}})}"']\& {(\wt{\wt{f\c g}\c h})\st Y} \arrow[dd,"{}", aiso]\arrow[rd,"{}", aiso] \& {} \& {h\st({f\c g})\st Y} \arrow[rd,"{}",aiso,""{name=A}]  \& {} \& {}\\
				{} \& {} \& {(\wt{f\c g}\c h)\st Y} \arrow[rr,"{}",iso,]\arrow[ru,"{}",aeq] \arrow[rd,"{}", aeq] \& {} \& {h\st (f\c g)\st Y} \arrow[r,"{}",aeq]\& {h\st g\st f\st Y} \arrow[d,"{}", aeq]\\
				{} \& {{(\wt{f\c g}\c h)\st Y}} \arrow[rr,"{}",aeq] \& {} \& {(f\c g\c h)\st Y} \arrow[ru,"{}", aeq] \arrow[rr,"{}", aeq]\& {} \& {(g\c h)\st f\st Y}
		\end{cdsN}}
	\end{eqD*}
	\begin{eqD*}
		\vspace*{-1.8ex}\hspace*{5.7cm}
		\begin{cdsN}{3}{3}
			\hphantom{.}\arrow[d,equal]\\
			\hphantom{.}
		\end{cdsN}\hspace{5.4cm}
	\end{eqD*}
	\begin{eqD*}\scalebox{0.8}{
			\begin{cdsN}{9}{7}
				{(\wt{\wt{f\c g}\c h})\st X} \arrow[r,"{}",aiso] \arrow[d,"{}",aiso] \& {(\wt{f\c g}\c h)\st X} \arrow[rr,"{}",iso ,shift left=-5ex] \arrow[rd,"{}",aiso] \arrow[r,"{}",aeq] \& {h\st(\wt{f\c g})\st X}  \arrow[r,"{}",aiso] \& {h\st({f\c g})\st X}  \arrow[r,"{}",aeq] \& {h\st g\st f\st X} \arrow[d,"{h\st g\st (m_D(f))}"] \\
				{(\wt{f\c g\c h})\st X} \arrow[rr,"{}",aeq] \arrow[rd,"{m_{D''}({\wt{f\c g\c h}}}"']\& {} \& {({f\c g\c h})\st X} \arrow[ru,"{}",aeq] \arrow[r,"{}",aiso]\& {(g\c h)\st f \st X} \arrow[lld,"{(m_{g\c h})_f}", Rightarrow ,shorten <=10.5ex, shorten >= 10.5ex]\arrow[ru,"{}",aeq] \arrow[rd,"{(g\c h)\st (m_D(f))}"'] \arrow[r,"{}",iso] \& {h\st g\st f\st Y} \arrow[d,"{}", aiso] \\
				{} \& {(\wt{f\c g\c h})\st Y} \arrow[r,"{}",aiso]  \& {({f\c g\c h})\st Y} \arrow[rr,"{}", aeq] \& {} \& {(g\c h)\st f\st Y} 
		\end{cdsN}}
	\end{eqD*}
\end{samepage}
And the second axiom of trimodification computed on component $f$ gives again the condition of normalization of Diagram (\ref{norm}).
We observe that the data and the axioms of the trimodification $m$ and exactly the same of those of a descent datum on $S$ of morphisms of $F$, where the assignment $w_f$ on $f\: D\to C$ is the morphism $m_D(f)$ and the assignment $\eta_{\gamma}$ on a 2-cell $\gamma\: F \Rightarrow f'$ is the 2-cell $(m_D)_{\gamma}$. And the compatibility conditions are given by the functoriality and naturality of the structure cells involved and by the axioms of trimodification.

It suffices, then, to prove that the trimodification $m$ is isomorphic to a trimodification of the form $((-\c S)\c \Gamma)(w)$ for some  morphism $w\: X \to Y$ in $F(C)$ if and only if the corresponding descent datum of morphisms is effective.

If $m$ is isomorphic to $((-\c S)\c \Gamma)(w)$, this means that there exists an isomorphic perturbation $p\: n\c S \aP{} m$, where $n\: \sigma^X \aM{} \sigma^Y$ is the trimodification $m^w$ (image of $w$ under $\Gamma$, see Remark \ref{assYonedatricat}). So $p$ is given by a family of isomorphic modifications $p_D\: m_D \Rightarrow (n\c S)_D$ indexed on the objects of $\K$. Then, for every $f\: D\to C$ in $R(D)$, we have an isomorphic 2-cell 
\begin{eqD*}
	\begin{cdN}
		f\st X \arrow[r,"{f\st w}", bend left=30, ""'{name=A}] \arrow[r,"{m_D(f)}"', bend right=30,""{name=B}] \arrow[twoiso,from=A, to=B,"(p_D)_f", shorten >=-0.6ex, shorten <=-0.3ex]\&  f\st Y
	\end{cdN}
\end{eqD*}
These 2-cells are such that, given $f,f'\: D\to C$ in $S$ and a 2-cell $\gamma\: f\Rightarrow f'$, the following 2-cells are equal:
\begin{eqD*}
	\begin{cdN}
		{f\st X} \arrow[r,"{}"' ,iso, shift left=-6ex] \arrow[r,"{f\st w}"',""{name=P}] \arrow[d,"{F(\gamma)_X}"'] \arrow[r,"{m_D(f)}", ""{name=O}, bend left= 60] \arrow[from=O,to=P,"{(p_D)_f}", Leftarrow ,shorten <=0.5ex, shorten >= 0.5ex]\& {f\st Y}  \arrow[d,"{F(\gamma)_Y}"]\\
		{{f'}\st Y} \arrow[r,"{{f'}\st w}"']\& {{f'}\st X}
	\end{cdN}
	\begin{cdN}
		{\hphantom{.}}  \arrow[r,""'{inner sep= 1ex}, equal ,shorten <=2.5ex, shorten >= 2.5ex]\& {\hphantom{.}}
	\end{cdN}
	\begin{cdN}
		{f\st X} \arrow[r,"{}"' ,iso, shift left=-6ex]  \arrow[d,"{F(\gamma)_X}"'] \arrow[r,"{m_D(f)}", ""{name=O}] \& {f\st Y}  \arrow[d,"{F(\gamma)_Y}"]\\
		{{f'}\st X} \arrow[r,"{{f'}\st w}"', ""{name=R}, bend right= 60] \arrow[r,"{m_D({f'})}",""{name=Q}]\arrow[from=R,to=Q,"{(p_D)_{f'}}"', Rightarrow ,shorten <=0.5ex, shorten >= 0.5ex]\& {{f'}\st Y} 	\end{cdN}
\end{eqD*}
Moreover, given morphisms $E\ar{g} D \ar{f} C$ with $f\in S$, the following equality of 2-cells holds:
\begin{eqD*}
	\begin{cdN}
		{} \& {(f\c g)\st X} \arrow[r,"{}",aeq] \& {g\st f\st X} \arrow[dd,"{(m_g)_f}", Rightarrow ,shorten <=6.5ex, shorten >= 6.5ex,shift left=-8ex] \arrow[rd,"{g\st f\st w}",""'{name=A}, bend left=40] \arrow[rd,"{g\st m_D(f)}"',""{name=B}, bend right=40] \arrow[from=A,to=B,"{(p_D)_f}", Rightarrow ,shorten <=1.5ex, shorten >= 1.5ex] \& {} \\
		{(\wt{f\c g})\st X} \arrow[rd,"{m_E({\wt{f\c g}})}"'] \arrow[ru,"{}",aiso] \& {} \& {} \& {g\st f\st Y} \\
		{} \& {(\wt{f\c g})\st Y} \arrow[r,"{}",aiso] \& {(f\c g)\st Y} \arrow[ru,"{}",aeq] \& {} 
	\end{cdN}
\end{eqD*}
\vspace{-6.5mm}
\begin{eqD*}
	\begin{cdN}
		{\hphantom{.}} \arrow[d,"{}",equal ,shorten <=2.5ex, shorten >= 2.5ex, shift left=3ex]\\
		{\hphantom{.}}
	\end{cdN}
\end{eqD*}
\vspace{-6.5mm}
\begin{eqD*}
	\begin{cdN}
		{} \& {(f\c g)\st X} \arrow[r,"{}",aeq] \& {g\st f\st X} \arrow[rd,"{g\st f\st w}"] \& {} \\
		{(\wt{f\c g})\st X} \arrow[rrr,"{}",iso] \arrow[rd,"{{(\wt{f\c g})\st w}}",""'{name=A}, bend left=40] \arrow[rd,"{m_E({\wt{f\c g}})}"',""{name=B}, bend right=40] \arrow[from=A, to=B,"{(p_E)_{\wt{f\c g}}}", Rightarrow ,shorten <=1.5ex, shorten >= 1.5ex] \arrow[ru,"{}",aiso] \& {} \& {} \& {g\st f\st Y} \\
		{} \& {(\wt{f\c g})\st Y} \arrow[r,"{}",aiso] \& {(f\c g)\st Y} \arrow[ru,"{}",aeq] \& {} 
	\end{cdN}
\end{eqD*}
This happens exactly when the descent datum that corresponds to $m$ is effective with isomorphic 2-cell $\psi^f$ relative to $f\: D\to C$ equal to $(p_D)_f$.
\end{proof}

We now consider the gluing condition on objects. This is the newest condition and we expect it to be the weakest one. 

We give the following definition.

\begin{defne}\label{weakdescentdatum}
	Let $F\: \K\op \to \Bicat$ be a trihomomorphism and let $S$ be a bisieve on $C\in \K$. 
	
	A \dfn{weak descent datum for $S$ of elements of $F$} is an assignment for every morphism $D\ar{f} C$ in $S$ of an object $W_f \in F(D)$, for every 2-cell $\gamma\: f \Rightarrow f'$ with $f,f'\: D\to C$ in $S$ of a morphism 
	$$W_f \ar{\eta_{\gamma}} W_{f'}$$
	and for every pair of composable morphisms 
	$D'\ar{g}D\ar{f}C$
	with $f\in S$, of an equivalence
	$${\phi^{f,g}} \: W_{\wt{{f\c g}}}\aequi g\st W_f .$$
	
	Moreover, for every morphism $D\ar{f} C$ in $S$ we have an isomorphic 2-cell
		\begin{cd}
		{W_f}  \arrow[r,"{}",aiso, bend right=40,""{name=B}] \arrow[r,"{\phi^{f,\id{D}}}", bend left =40,""'{name=A}]  \arrow[twoiso,from=A, to=B,"\rho_f", shorten >=1ex, shorten <=1ex]\& {\id{D}\st W_f} \\
	\end{cd}
	and for every chain of morphisms $L \ar{h} E \ar{g} D\ar{f} C$ with $f\in S$ we have an isomorphism
		\begin{eqD}{weakcocycle}
		\begin{cdN}
			{W_{\wt{\wt{f\c g}\c h}}} \arrow[d,"{}",aiso]\arrow[r,"{\phi^{\wt{f\c g},h}}"] \& {g\st W_{\wt{f\c g}}} \arrow[r,"{h\st \phi^{f,g}}"]\& {h\st g\st W_f} \arrow[d,"{}",aeq] \arrow[lld,"{{\beta}^{g,h}_f}", Rightarrow ,shorten <=9.5ex, shorten >= 9.5ex]\\
			{W_{\wt{f\c g\c h}}} \arrow[rr,"{\phi^{f,g\c h}}"'] \& {} \& {(g\c h)\st W_f}
		\end{cdN}
	\end{eqD}
	
	\noindent Finally, given $f,f'\: D\to C$ in $R(D)$ and a 2-cell $\gamma\: f \Rightarrow f'$, we have an isomorphic 2-cell
	\begin{cd}
		{W_{\wt{f\c g}}} \arrow[r,"{\phi^{f,g}}"] \arrow[d,"{\eta_{R(g)(\gamma)}}"']\& {g\st W_f} \arrow[d,"{g\st \eta_{\gamma}}"] \arrow[ld,"{\rho^{g}_{\gamma}}",twoiso, shorten <= 2ex, shorten >= 2ex]\\
		{W_{\wt{f'\c g}}} \arrow[r,"{\phi^{f',g}}"'] \& {g\st W_{f'}}
	\end{cd}
and given $f\: D\to C$ in $R(D)$, morphisms $g,g'\: E \to D$ and a 2-cell $\delta\: g \Rightarrow g'$, we have an isomorphic 2-cell 
		\begin{cd}
		{W_{\wt{f\c g}}} \arrow[r,"{\phi^{f,g}}"] \arrow[d,"{W_{R(\delta)_f}}"']\& {g\st W_f} \arrow[d,"{F(\delta)_{W_f}}"] \arrow[ld,"{(\alpha_{\delta})_f}",twoiso, shorten <= 2ex, shorten >= 2ex]\\
		{W_{\wt{f\c g'}}} \arrow[r,"{\phi^{f,{g'}}}"'] \& {{g'}\st W_{f}}
	\end{cd}
These data need to satisfy the following compatibility conditions.

Given $f\: D\to C$ in $R(D)$, there exists an isomorphic 2-cell 
	\begin{cd}
		{W_f}  \arrow[r,"{}",equal, bend right=40,""{name=B}] \arrow[r,"{\eta_{\id{f}}}", bend left =40,""'{name=A}]  \arrow[r,"{}",iso]\& {W_f}
	\end{cd}
and the following equalities of 2-cells hold:
\begin{eqD*}
	\begin{cdN}
	{W_{\wt{f\c g}}} \arrow[d,"{}",equal, bend right=60,""{name=A}]\arrow[r,"{\phi^{f,g}}"] \arrow[d,"{\eta_{R(g)(\id{f})}}"'{description},""{name=B}] \arrow[from=A,to=B,"{}",iso, shift left=2ex]\& {g\st W_f} \arrow[d,"{}",equal, bend left=60,""{name=D}]\arrow[d,"{g\st \eta_{\id{f}}}"{description},""'{name=C}] \arrow[ld,"{\rho_{\id{f}}}",twoiso, shorten <= 2ex, shorten >= 2ex]\\
	{W_{\wt{f\c g}}} \arrow[r,"{\phi^{f,g}}"'] \arrow[from=C,to=D,"{}",iso, shift left=2ex]\& {g\st W_{f}}
\end{cdN}
\h[7]=\h[2]
\begin{cdN}
	{W_{\wt{f\c g}}} \arrow[d,"{}",equal] \arrow[r,"{\phi^{f,g}}"] \& {g\st W_f} \arrow[d,"{}",equal]\\
	{W_{\wt{f\c g}}}  \arrow[r,"{\phi^{f,g}}"']\& {g\st W_f}
\end{cdN}
\end{eqD*}
and 
\begin{eqD*}
	\begin{cdN}
	{W_{{f}}} \arrow[r,"{\phi^{f,\id{D}}}",""'{name=A}, bend left=40] \arrow[r,"{}",aeq,""{name=B}, bend right=40] \arrow[from=A,to=B,"{\rho_f}",Rightarrow ,shorten <=1.5ex, shorten >= 1.5ex] \arrow[d,"{\eta_{\gamma}}"',""{name=C}]\& {{\id{D}}\st W_f} \arrow[d,"{{\id{D}}\st \eta_{\gamma}}",""'{name=D}]\arrow[from=C,to=D,"{}",iso,shift left=-1ex]\\
	{W_{{f'}}} \arrow[r,"{}"',aeq] \& {\id{D}\st W_{f'}}
\end{cdN}
\h[3]=\h[3]
\begin{cdN}
	{W_{{f}}} \arrow[r,"{\phi^{f,\id{D}}}"] \arrow[d,"{\eta_{\gamma}}"']\& {{\id{D}}\st W_f} \arrow[d,"{{\id{D}}\st \eta_{\gamma}}"] \arrow[ld,"{\rho^{{\id{D}}}_{\gamma}}"{inner sep=0.001ex},twoiso,shift left=-2ex, shorten <= 4ex, shorten >= 4ex]\\
	{W_{{f'}}} \arrow[r,"{\phi^{f',\id{D}}}"', bend left=40,""{name=E}] \arrow[r,"{}",aeq,""{name=F}, bend right=40] \arrow[from=E,to=F,"{\rho_{f'}}",Rightarrow ,shorten <=1.5ex, shorten >= 1.5ex]\& {\id{D}\st W_{f'}}
\end{cdN}
\end{eqD*}
\noindent Given $f\: D\to C$ in $R(D)$ and $g\: E \to D$ in $\K$, the following equality of 2-cells holds:
		\begin{eqD*}
		\begin{cdN}
			{W_{\wt{f\c g}}} \arrow[r,"{\phi^{f,g}}"] \arrow[d,"{}", equal, bend right=60 ,""{name=A}] \arrow[d,"{{R(\id{g})_f}}"'{description},""'{name=B}]\& {g\st W_f} \arrow[d,"{F(\id{g})_{W_f}}"{description},""'{name=C}] \arrow[d,"{}", equal, bend left=60 ,""'{name=D}] \arrow[from=A,to=B,"{}",iso,shift left=2ex] \arrow[from=C, to=D,"{}",iso,shift left=2ex] \arrow[ld,"{(\alpha_{\id{g}})_f}",twoiso, shorten <= 2ex, shorten >= 2ex]\\
			{W_{\wt{f\c g}}} \arrow[r,"{\phi^{f,{g}}}"'] \& {{g}\st W_{f}}
		\end{cdN}
		\h[7]=\h[2]
		\begin{cdN}
			{W_{\wt{f\c g}}} \arrow[d,"{}",equal] \arrow[r,"{\phi^{f,g}}"] \& {g\st W_f} \arrow[d,"{}",equal]\\
			{W_{\wt{f\c g}}}  \arrow[r,"{\phi^{f,g}}"']\& {g\st W_f}
		\end{cdN}
	\end{eqD*}

\noindent Given morphisms $f,f',f''\: D\to C$ in $R(D)$ and 2-cells $\gamma\: f \Rightarrow f'$ and $\delta\: f' \Rightarrow f''$, there exists an invertible 2-cell

\begin{cd}
	{W_f} \arrow[rr,"{}",iso,shift left=-4ex] \arrow[rr, bend right =40, "{\eta_{\delta \c \gamma}}"'] \arrow[r,"{\eta_{\gamma}}"] \& {W_{f'}} \arrow[r,"{\eta_{\delta}}"] \& {W_{f''}}
\end{cd}
and the following equalities of 2-cells hold:
	\begin{eqD*}
	\begin{cdN}
		{W_{\wt{f\c g}}} \arrow[r,"{\phi^{f,g}}"] \arrow[d,"{\eta_{R(g)(\delta \c \gamma)}}"']\& {g\st W_f} \arrow[d,"{g\st \eta_{\delta \c \gamma}}"] \arrow[ld,"{\rho_{\delta \c \gamma}}",twoiso, shorten <= 2ex, shorten >= 2ex]\\
		{W_{\wt{f''\c g}}} \arrow[r,"{\phi^{f'',g}}"'] \& {g\st W_{f''}}
	\end{cdN}
	\h[3] = \h[3]
	\begin{cdN}
		{W_{\wt{f\c g}}} \arrow[dd,"{\eta_{R(g)(\delta \c \gamma)}}"', bend right=60,""{name=A}]\arrow[r,"{\phi^{f,g}}"] \arrow[d,"{\eta_{R(g)(\gamma)}}"']\& {g\st W_f} \arrow[d,"{g\st \eta_{\gamma}}"] \arrow[ld,"{\rho_{\gamma}}",twoiso, shorten <= 2ex, shorten >= 2ex]\arrow[dd,"{g\st \eta_{\delta \c \gamma}}", bend left=60,""'{name=B}]\\
		{W_{\wt{f'\c g}}} \arrow[from=A,"{}",iso]\arrow[d,"{\eta_{R(g)(\delta)}}"'] \arrow[r,"{\phi^{f',g}}"'] \& {g\st W_{f'}} \arrow[d,"{g\st \eta_{\delta}}"] \arrow[ld,"{\rho_{\delta}}",twoiso, shorten <= 2ex, shorten >= 2ex] \arrow[to=B,"{}",iso]\\
		{W_{\wt{f''\c g}}} \arrow[r,"{\phi^{f'',g}}"'] \& {g\st W_{f''}}
	\end{cdN}
\end{eqD*}
and 
\begin{eqD*}
	\begin{cdN}
		{} \& {h\st W_{\wt{f\c g}}} \arrow[d,"{\beta^{g,h}_f}", Rightarrow ,shorten <=1.5ex, shorten >= 1.5ex, shift left=7ex]\arrow[r,"{h\st \phi^{f,g}}"]\& {h\st g\st W_f} \arrow[rd,"{}",aeq] \& {}\\
		{W_{\wt{\wt{f\c g}\c h}}}  \arrow[r,"{}",iso, shift left=-6ex]\arrow[r,"{}",aiso] \arrow[d,"{\eta_{R(g)(\gamma \star g)}}"'] \arrow[ru,"{\phi^{\wt{f\c g},h}}"] \& {W_{\wt{f \c g \c h}}} \arrow[d,"{\eta_{R(g\c h)(\gamma)}}"{description}]  \arrow[rr,"{\phi^{f,g\c h}}"]\& {} \& {(g\c h)\st W_f} \arrow[d,"{(g\c h)\st \eta_{\gamma}}"] \arrow[lld,"{\rho^{g\c h}_{\gamma}}", Rightarrow ,shorten <=10.5ex, shorten >= 10.5ex]\\
		{W_{\wt{\wt{f'\c g}\c h}}} \arrow[r,"{}",aiso]\& {W_{\wt{f '\c g \c h}}} \arrow[rr,"{\phi^{f',g\c h}}"'] \& {} \& {(g\c h)\st W_{f'}}\\
	\end{cdN}
\end{eqD*}
\vspace{-15.5mm}
\begin{eqD*}
	\begin{cdN}
		{\hphantom{.}} \arrow[d,"{}",equal ,shorten <=2.5ex, shorten >= 2.5ex, shift left=3ex]\\
		{\hphantom{.}}
	\end{cdN}
\end{eqD*}
\vspace{-6.5mm}
\begin{eqD*}
	\begin{cdN}
		{W_{\wt{\wt{f\c g}\c h}}} \arrow[r,"{}",iso, shift left=-6ex]\arrow[d,"{\eta_{R(g)(\gamma \star g)}}"'] \arrow[r,"{\phi^{\wt{f\c g},h}}"] \& {h\st W_{\wt{f\c g}}} \arrow[d,"{h\st \eta_{R(g)(\gamma)}}"{description}]\arrow[r,"{h\st \phi^{f,g}}"] \& {h\st g\st W_f} \arrow[ld,"{h\st \rho^{g}_{\gamma}}", Rightarrow ,shorten <=4ex, shorten >= 4ex] \arrow[r,"{}",iso, shift left=-6ex] \arrow[d,"{h\st g\st \eta_{\gamma}}"{description}] \arrow[r,"{}",aeq] \& {(g\c h)\st W_f} \arrow[d,"{(g\c h)\st \eta_{\gamma}}"]\\
		{W_{\wt{\wt{f'\c g}\c h}}} \arrow[r,"{\phi^{\wt{f\c g},h}}",] \arrow[rd,"{}",aiso] \& {h\st W_{\wt{f'\c g}}} \arrow[d,"{\beta^{g,h}_{f'}}", Rightarrow ,shorten <=1.5ex, shorten >= 1.5ex, shift left=7ex] \arrow[r,"{h\st \phi^{f',g}}"] \& {h\st g\st W_{f'}} \arrow[r,"{}",aeq]\& {(g\c h)\st W_{f'}}\\
		{} \& {W_{\wt{f '\c g \c h}}} \arrow[rru,"{\phi^{f',g\c h}}"', bend right=15] \& {} \& {}
	\end{cdN}
\end{eqD*}

Given morphisms $f\: D\to C$ in $R(D)$, $g,g',g''\: E\to D$ in $\K$ and 2-cells $\delta\: g \Rightarrow g'$ and $\varepsilon\: g' \Rightarrow g''$, the following equality of 2-cells holds:
\begin{eqD*}
	\begin{cdN}
		{W_{\wt{f\c g}}} \arrow[r,"{\phi^{f,g}}"] \arrow[d,"{{R(\varepsilon \c \delta)_f}}"']\& {g\st W_f} \arrow[d,"{F(\varepsilon\c \delta)_{W_f}}"] \arrow[ld,"{(\alpha_{\varepsilon \c \delta})_f}",twoiso, shorten <= 2ex, shorten >= 2ex]\\
		{W_{\wt{f\c g''}}} \arrow[r,"{\phi^{f,g''}}"'] \& {{g''}\st W_{f}}
	\end{cdN}
	\h[3] = \h[3]
	\begin{cdN}
		{W_{\wt{f\c g}}} \arrow[dd,"{R(\varepsilon \c \delta)_f}"', bend right=60,""{name=A}]\arrow[r,"{\phi^{f,g}}"] \arrow[d,"{R(\delta)_f}"']\& {g\st W_f} \arrow[d,"{F(\delta)_{W_f}}"] \arrow[ld,"{(\alpha_{\delta})_f}",twoiso, shorten <= 2ex, shorten >= 2ex]\arrow[dd,"{F(\varepsilon\c \delta)_{W_f}}", bend left=60,""'{name=B}]\\
		{W_{\wt{f\c g'}}} \arrow[from=A,"{}",iso]\arrow[d,"{R(\varepsilon)_f}"'] \arrow[r,"{\phi^{f,g'}}"'] \& {{g'}\st W_{f}} \arrow[d,"{F(\varepsilon)_{W_f}}"] \arrow[ld,"{(\alpha_{\varepsilon})_f}",twoiso, shorten <= 2ex, shorten >= 2ex] \arrow[to=B,"{}",iso]\\
		{W_{\wt{f\c g''}}} \arrow[r,"{\phi^{f,g''}}"'] \& {{g''}\st W_{f}}
	\end{cdN}
\end{eqD*}

Given a chain of morphisms $M\ar{t} L \ar{h} E \ar{g} D \ar{f} C$ with $f\in S$, the following equality of 2-cells holds:
\begin{samepage}
\begin{eqD*}\scalebox{0.9}{
	\begin{cdN}
		{W_{\wt{\wt{\wt{f\c g}\c h}\c t}}} \arrow[rd,"{}",aiso] \arrow[dd,"{}",aiso,""{name=C,pos=0.8}] \arrow[r,"\phi^{\wt{\wt{f\c g}\c h},t}"] \& {t\st W_{\wt{\wt{f\c g}\c h}}}\arrow[rr,"{t\st \phi^{\wt{f\c g},h}}"]\& {} \& {t\st h\st W_{\wt{f\c g}}} \arrow[d,"{}",aeq,""{name=A}] \arrow[rr,"{t\st h\st \phi^{f,g}}"] \arrow[lld,"{\beta^{h,t}_{\wt{f\c g}}}"'{pos=0.4},Rightarrow ,shorten <=7.5ex, shorten >= 8.5ex]\& {} \& {t\st h\st g\st W_f} \arrow[d,"{}",aeq,""{name=B}] \arrow[from=A,to=B,"{}",iso]\\
		{} \& {W_{\wt{\wt{f\c g}\c h \c t}}} \arrow[d,"{}",aiso,""{name=D}] \arrow[from=C, to=D,iso,"{}",shift left=2ex]\arrow[rr,"{\phi^{\wt{f\c g},h \c t}}"] \& {} \& {(h\c t)\st W_{\wt{f\c g}}} \arrow[rr,"{(h\c t)\st \phi^{f,g}}"]\& {} \& {(h\c t)\st g \st W_f} \arrow[d,"{}",aeq] \arrow[lllld,"{\beta^{g, h\c t}_f}",Rightarrow ,shorten <=21.5ex, shorten >= 21.5ex] \\
		{W_{\wt{\wt{f\c g\c h}\c t}}} \arrow[r,"{}",aiso]\& {W_{\wt{f\c g\c h \c t}}} \arrow[rrrr,"{\phi^{f,g\c h\c t}}"']\& {} \& {} \& {} \& {(g\c h\c t)\st W_f} 
	\end{cdN}}
\end{eqD*}
\vspace{-6.5mm}
\begin{eqD}{cocycle4mor}
	\begin{cdN}
		{\hphantom{.}} \arrow[d,"{}",equal ,shorten <=2.5ex, shorten >= 2.5ex, shift left=3ex]\\
		{\hphantom{.}}
	\end{cdN}
\end{eqD}
\vspace{-6.5mm}
\begin{eqD*}\scalebox{0.9}{
	\begin{cdN}
		{W_{\wt{\wt{\wt{f\c g}\c h}\c t}}} \arrow[dd,"{}",aiso,""{name=A}] \arrow[r,"\phi^{\wt{\wt{f\c g}\c h},t}"] \& {t\st W_{\wt{\wt{f\c g}\c h}}} \arrow[rd,"{}",aiso,""{name=P,pos=0.95}]\arrow[rr,"{t\st \phi^{\wt{f\c g},h}}"]\&[-4ex] {} \&[-6ex] {t\st h\st W_{\wt{f\c g}}} \arrow[rr,"{t\st h\st \phi^{f,g}}"]\&[-6ex] {} \&[-2ex] {t\st h\st g\st W_f} \arrow[d,"{}",aeq] \arrow[ld,"{}",aeq] \arrow[to=P,"{h\st \beta^{g,h}_f}",Rightarrow,shorten <=18.5ex, shorten >= 18.5ex]\\
		{} \& {} \& {t\st W_{\wt{f\c g\c h}}} \arrow[from=A,"{}",iso] \arrow[rr,"{t\st \phi^{f,g\c h}}",""'{name=F}] \& {} \& {t\st((g\c h)\st W_f)} \arrow[rd,"{}",aeq] \arrow[r,"{}",iso]\& {(h\c t)\st g \st W_f} \arrow[d,"{}",aeq]  \\
		{W_{\wt{\wt{f\c g\c h}\c t}}} \arrow[r,"{}",aiso]\& {W_{\wt{f\c g\c h \c t}}} \arrow[ru,"{\phi^{\wt{f\c g\c h},t}}"] \arrow[rrrr,"{\phi^{f,g\c h\c t}}"',""'{name=G,pos=0.425}] \arrow[from=F,to=G,"{\beta^{g\c h,t}_f}",Rightarrow,shorten <=3.5ex, shorten >= 3.5ex]\& {} \& {} \& {} \& {(g\c h\c t)\st W_f} 
	\end{cdN}}
\end{eqD*}
\end{samepage}
Given $f\: D\to C$ in $R(D)$ and $g\: E\to D$ in $\K$, the following equalities of 2-cells hold:
\begin{eqD*}
	\begin{cdN}
		{W_{\wt{f\c g}}} \arrow[d,"{}",equal] \arrow[r,"{\phi^{\wt{f\c g},\id{E}}}"]\& {\id{E}\st W_{\wt{f\c g}}} \arrow[r,"{\id{E}\st \phi^{f,g}}"]\& {\id{E}\st g \st W_f} \arrow[d,"{}",aeq] \arrow[lld,"{\beta^{g,\id{E}}_f}",Rightarrow,shorten <=8.5ex, shorten >= 10.5ex]\\
		{W_{\wt{f\c g}}} \arrow[rr,"{\phi^{f,g}}"'] \& {} \& {g\st W_f}
	\end{cdN}
\end{eqD*}
\vspace{-8.5mm}
\begin{eqD}{idE}
	\begin{cdN}
		{\hphantom{.}} \arrow[d,"{}",equal ,shorten <=2.5ex, shorten >= 2.5ex]\\
		{\hphantom{.}}
	\end{cdN}
\end{eqD}
\vspace{-8.5mm}
\begin{eqD*}
	\begin{cdN}
		{W_{\wt{f\c g}}} \arrow[r,"{}",aeq, bend right=50,""'{name=B,pos=0.46}] \arrow[d,"{}",equal,""{name=C}] \arrow[r,"{\phi^{\wt{f\c g},\id{E}}}",""'{name=A}] \arrow[from=A, to=B,"{\rho_{\wt{f\c g}}}"{pos=0.3}, Rightarrow ,shorten <=0.2ex, shorten >= 1ex, shift left=-1ex]\& {\id{E}\st W_{\wt{f\c g}}} \arrow[r,"{\id{E}\st \phi^{f,g}}"]\& {\id{E}\st g \st W_f} \arrow[d,"{}",aeq,""{name=D}] \arrow[from=C, to=D,"{}",iso]\\
		{W_{\wt{f\c g}}} \arrow[rr,"{\phi^{f,g}}"'] \& {} \& {g\st W_f}
	\end{cdN}
\end{eqD*}

and 

\begin{eqD*}
	\begin{cdN}
		{W_{\wt{f\c g}}} \arrow[d,"{}",equal] \arrow[r,"{\phi^{f,g}}"]\& {g\st W_f} \arrow[r,"{g\st \phi^{f,\id{D}}}"]\& { g \st\id{D}\st  W_f} \arrow[d,"{}",aeq] \arrow[lld,"{\beta^{\id{D},g}_f}",Rightarrow,shorten <=8.5ex, shorten >= 10.5ex]\\
		{W_{\wt{f\c g}}} \arrow[rr,"{\phi^{f,g}}"'] \& {} \& {g\st W_f}
	\end{cdN}
\end{eqD*}
\vspace{-6.5mm}
\begin{eqD}{idD}
	\begin{cdN}
		{\hphantom{.}} \arrow[d,"{}",equal ,shorten <=2.5ex, shorten >= 2.5ex]\\
		{\hphantom{.}}
	\end{cdN}
\end{eqD}
\vspace{-6.5mm}
\begin{eqD*}
	\begin{cdN}
		{W_{\wt{f\c g}}}  \arrow[d,"{}",equal,""{name=C}] \arrow[r,"{\phi^{f,g}}"]\& {g\st W_f} \arrow[r,"{}",aeq,bend right=50,""'{name=B,pos=0.46}] \arrow[r,"{g\st \phi^{f,\id{D}}}",""'{name=A}] \arrow[from=A, to=B,"{g\st \rho_{f}}"{pos=0.3}, Rightarrow ,shorten <=0.2ex, shorten >= 1ex, shift left=-1ex]\& { g \st\id{D}\st  W_f} \arrow[d,"{}",aeq,""{name=D}] \arrow[from=C,to=D,"{}",iso] \\
		{W_{\wt{f\c g}}} \arrow[rr,"{\phi^{f,g}}"'] \& {} \& {g\st W_f}
	\end{cdN}
\end{eqD*}

\vspace{3.5mm}

	\noindent This weak descent datum is  \dfn{weakly effective} if there exists an object $W\in F(C)$, for every morphism $D\ar{f}C\in S$ an equivalence
	$$\psi^f\: W_f \aequi{} f\st W$$
	for every $D\ar{f}C$ in $R(D)$ and every morphism $g\: E\to D$ an isomorphic 2-cell
	\begin{eqD}{2cella}
	\begin{cdN}
		{W_{\wt{f\c g}}} \arrow[rr,"{\phi^{f,g}}"] \arrow[d,"{\psi^{\wt{f\c g}}}"']\& {} \& {g\st W_f} \arrow[d,"{g\st \psi^f}"] \arrow[lld,"{(\varepsilon_g)_f}",Rightarrow ,shorten <=10.5ex, shorten >= 12.5ex]\\
		{(\wt{f\c g})\st W} \arrow[r,"{}",aiso]\& {(f\c g)\st W}\arrow[r,"{}",aeq] \& {g\st f\st W} 
	\end{cdN}
\end{eqD}
and for every 2-cell $\gamma\: f\Rightarrow f'$ with $f,f'\: D\to C$ in $R(D)$ an isomorphic 2-cell
\begin{eqD*}
	\begin{cdN}
		{W_f} \arrow[r,"{\psi^f}"] \arrow[d,"{\eta_{\gamma}}"']\& {f\st W} \arrow[d,"{{F(\gamma) _W}}"] \arrow[ld,"{\psi_{\gamma}}",twoiso, shorten <= 2ex, shorten >= 2ex]\\
		{W_{f'}} \arrow[r,"{\psi^{f'}}"'] \& {{f'}\st W}
	\end{cdN}
\end{eqD*}

\noindent These data need to satisfy the following conditions.
Given $f\: D\to C$ in $R(D)$, the following equality of 2-cells holds:
\begin{eqD*}
	\csq*[l][7][7][\psi_{id_f}][2.5]{W_f}{f\st W}{W_f}{f\st W}{\psi^f}{\eta_{\id{f}}}{F(\id{f})_W}{\psi^f}
	\h[3]= \h[3]
	\begin{cdN}
		{W_f} \arrow[r,"{\psi^f}"] \arrow[d,"{\eta_{\id{f}}}"', ""{name=A},bend right=60]\arrow[d,"{}",equal,""'{name=B}] \arrow[from=A,to=B,"{}",iso]\& {f\st W} \arrow[d,"{}",equal,""{name=C}] \arrow[d,"{F(\id{f})_W}",""'{name=D}, bend left=60] \arrow[from=C, to=D,"{}", iso]\\
		{W_f} \arrow[r,"{\psi^f}"']\& {f\st W}
	\end{cdN}
\end{eqD*}

Given $f,f',f'':D\to C$ in $R(D)$ and 2-cells $\gamma\: f \Rightarrow f'$ and $\delta\: f' \Rightarrow f''$, the following equality of 2-cells holds:
\begin{eqD*}
	\csq*[l][7][7][\psi_{\delta \c \gamma}][2.5]{W_f}{f\st W}{W_{f''}}{{f''}\st W}{\psi^f}{\eta{\delta\c \gamma}}{F(\delta\c \gamma)_W} {\psi^{f''}}
	\h[3] = \h[3]
	\begin{cdN}
		{W_f}\arrow[dd,"{\eta_{\delta \c \gamma}}"', bend right=60,""{name=G}] \arrow[r,"{\psi^f}"] \arrow[d,"{\eta_\gamma}"']\& {f\st W} \arrow[d,"{F(\gamma)_W}"] \arrow[dd,"{F(\delta \c \gamma)_W}", bend left=60, ""'{name=F}] \arrow[ld,"{\psi_{\gamma}}", Rightarrow ,shorten <=3.5ex, shorten >= 3.5ex]\\
		{W_{f'}} \arrow[from=G,"{}",iso] \arrow[d,"{\eta_\delta}"']\arrow[r,"{\psi^{f'}}"'] \& {{f'}\st W} \arrow[d,"{F(\delta)_W}"] \arrow[to=F,"{}", iso] \arrow[ld,"{\psi_{\delta}}", Rightarrow ,shorten <=3.5ex, shorten >= 3.5ex]\\
		{W_{f''}} \arrow[r,"{\psi^{f''}}"'] \& {{f''}\st W}
	\end{cdN}
\end{eqD*}

Given morphisms $f\: D\to C$ in $R(D)$ and $g,g'\: E \to D$ in n$\K$ and a 2-cell $\delta\: g \Rightarrow g'$, the following equality of 2-cells holds:

\begin{eqD*}
	\begin{cdN}
		{} \& {} \&[-5ex] {g\st W_f} \arrow[d,"{(\varepsilon_g)_f}",Rightarrow ,shorten <=2.5ex, shorten >= 2.5ex]\arrow[rrd,"{g\st \psi^f}", bend left=25]  \&[-5ex] {} \& {} \\
		{W_{\wt{f\c g}}} \arrow[r,"{\psi^{\wt{f\c g}}}"] \arrow[d,"{\eta_{R(g)(\delta)}}"',""'{name=A}] \arrow[rru,"{\phi^{f,g}}", bend left=25] \& {(\wt{f\c g})\st W}  \arrow[d,"{F(R(g)(\delta))_W}"{description},""'{name=B}] \arrow[rr,"{}",aiso,]\& {} \& {(f\c g)\st W} \arrow[d,"{F(f\star \delta)_W}"{description},""{name=C}] \arrow[r,"{}",aeq] \& {g\st f\st W} \arrow[d,"{g\st (F(\delta)_W)}",""{name=D}] \arrow[from=A,to=B,"{}",iso] \arrow[from=C,to=D,"{}",iso] \arrow[from=B,to=C,"{}",iso]\\
		{W_{\wt{f\c g'}}} \arrow[r,"{\psi^{\wt{f\c g'}}}"'] \& {(\wt{f\c g'})\st W} \arrow[rr,"{}",aiso] \& {} \& {(f\c g')\st W} \arrow[r,"{}",aeq] \& {{g'}\st f\st W} 
	\end{cdN}
\end{eqD*}
\vspace{-5.5mm}
\begin{eqD}{epsilongg'}
	\begin{cdN}
		{\hphantom{.}} \arrow[d,"{}",equal ,shorten <=2.5ex, shorten >= 2.5ex]\\
		{\hphantom{.}}
	\end{cdN}
\end{eqD}
\vspace{-5.5mm}
\begin{eqD*}
	\begin{cdN}
		{W_{\wt{f\c g}}}  \arrow[d,"{\eta_{R(g)(\delta)}}"',""{name=A}]\arrow[rr,"{\phi^{f,g}}"] \& {} \&[-5ex] {g\st W_f} \arrow[d,"{F(\delta)_{W_f}}",""{name=B}] \arrow[rr,"{g\st \psi^f}"] \&[-5ex] {} \& {g\st f\st W} \arrow[d,"{g\st (F(\delta)_W)}",""{name=C}] \arrow[from=A,to=B,"{}",iso] \arrow[from=B,to=C,"{}",iso]\\
		{W_{\wt{f\c g'}}}  \arrow[rr,"{\psi^{f,g'}}"]\arrow[rd,"{\psi^{\wt{f\c g'}}}"',bend right=25] \& {} \& {{g'}\st W_f}  \arrow[d,"{(\varepsilon_{g'})_f}",Rightarrow ,shorten <=2.5ex, shorten >= 2.5ex]\arrow[rr,"{{g'}\st \psi^f}"] \& {} \& {{g'}\st f\st W} \\
		{} \& {(\wt{f\c g'})\st W} \arrow[rr,"{}",aiso]\& {} \& {(f\c g')\st W} \arrow[ru,"{}",aeq, bend right =25]\& {}
	\end{cdN}
\end{eqD*}

\pagebreak[10]

\vspace{1.5mm}

Given morphisms $D''\ar{h} D' \ar{g}  D\ar{f} C$
with $f\in S$, the following equality of 2-cells holds:
\begin{samepage}
	\begin{eqD*}\scalebox{0.8}{
			\begin{cdsN}{7}{3}
				{W_{\wt{\wt{f\c g}\c h}}} \arrow[rrr,"{\phi^{h,\wt{f\c g}}}",] \arrow[d,"{}",aiso] \arrow[rd,"{\psi^{\wt{\wt{f\c g}\c h}}}"]\& {} \& {} \arrow[ld,"{(\varepsilon_h)_{\wt{f\c g}}}", Rightarrow ,shorten <=2.5ex, shorten >= 2.5ex, shift left=2ex]  \& {h\st W_{\wt{f\c g}}}  \arrow[d,"{h\st \psi^{\wt{f\c g}}}"] \arrow[rr,"{h\st \phi^{f,g}}"] \& {}  \& {h\st g\st W_f} \arrow[dd,"{h\st g\st \psi^f}"] \arrow[lld,"{h\st (\varepsilon_g)_f}", Rightarrow ,shorten <=8.5ex, shorten >= 8.5ex, shift left=4ex]\\
				{W_{\wt{{f\c g}\c h}}} \arrow[r,"{}",iso , shift left=-2ex] \arrow[rdd,"{\psi^{\wt{f\c g\c h}}}"']\& {(\wt{\wt{f\c g}\c h})\st W} \arrow[dd,"{}", aiso]\arrow[rd,"{}", aiso] \& {} \& {h\st({f\c g})\st W} \arrow[rd,"{}",aiso,""{name=A}]  \& {} \& {}\\
				{} \& {} \& {(\wt{f\c g}\c h)\st W} \arrow[rr,"{}",iso, color=blue]\arrow[ru,"{}",aeq] \arrow[rd,"{}", aeq] \& {} \& {h\st (f\c g)\st W} \arrow[r,"{}",aeq]\& {h\st g\st f\st W} \arrow[d,"{}", aeq]\\
				{} \& {{(\wt{f\c g}\c h)\st W}} \arrow[rr,"{}",aiso] \& {} \& {(f\c g\c h)\st W} \arrow[ru,"{}", aeq] \arrow[rr,"{}", aeq]\& {} \& {(g\c h)\st f\st W}
		\end{cdsN}}
	\end{eqD*}
	\begin{eqD}{axiom1trimod}
		\vspace*{-1.8ex}\hspace*{5.7cm}
		\begin{cdsN}{3}{3}
			\hphantom{.}\arrow[d,equal]\\
			\hphantom{.}
		\end{cdsN}\hspace{5.4cm}
	\end{eqD}
	\begin{eqD*}\scalebox{0.8}{
			\begin{cdsN}{9}{7}
				{W_{\wt{\wt{f\c g}\c h}}} \arrow[rr,"{\phi^{h,\wt{f\c g}}}"] \arrow[d,"{}",aiso] \& {}   \& {h\st W_{\wt{f\c g}}}  \arrow[rr,"{h\st \phi^{f,g}}"] \& {}   \& {h\st g\st W_f} \arrow[d,"{h\st g\st \psi^f}"] \arrow[ld,"{}",aeq] \arrow[lllld,"{\beta^{f,g}_h}",Rightarrow ,shorten <=31.5ex, shorten >= 31.5ex] \\
				{W_{\wt{f\c g\c h}}} \arrow[rd,"{\psi^{\wt{f\c g\c h}}}"'] \arrow[rrr,"{\phi^{f,g\c h}}"] \& {} \& {} \& {(g\c h)\st W_f} \arrow[lld,"{(\varepsilon_{g\c h})_f}", Rightarrow ,shorten <=10.5ex, shorten >= 10.5ex] \arrow[rd,"{(g\c h)\st \psi^f}"'] \arrow[r,"{}",iso] \& {h\st g\st f\st W} \arrow[d,"{}", aeq] \\
				{} \& {(\wt{f\c g\c h})\st W} \arrow[r,"{}",aiso]  \& {({f\c g\c h})\st W} \arrow[rr,"{}", aeq] \& {} \& {(g\c h)\st f\st W} 
		\end{cdsN}}
	\end{eqD*}
\end{samepage}
\end{defne}
\vspace{6.5mm}
We now make some comments about the previous definition.

\begin{oss}[weakness]
	We call the kind of descent datum of Definition \ref{weakdescentdatum} weak descent datum for two main reasons. The first one is that the morphisms of type $\phi^{f,g}$ are equivalences and not isomorphisms. The second one is that the conditions that are expressed by equalities for a descent datum are expressed by fixed isomorphisms for a weak descent datum. 
	
	Notice that the condition of effectiveness for a weak descent datum is equally weak. In fact the morphisms of type $\psi^f$ are just equivalences and so the global object allows one to recover the local data only up to equivalence.
\end{oss}

\begin{oss}[weak cocycle condition]
	The isomorphism $\beta^{g,h}_f$ of Diagram (\ref{weakcocycle}) can be viewed as a weak version of the cocycle condition satisfied by the usual descent data. Indeed, up to taking into account the pseudonaturality of the bisieve, it has the same shape of the usual cocycle condition. 
	\end{oss}

We are now ready to prove our third characterization result.

\begin{prop}\label{weffonobjects}
	Let $C\in \K$ and let $S\: R \Rightarrow \HomC{\K}{-}{C}$ be a bisieve on it. The pseudofunctor
	$$(-\c S)\c \Gamma\: F(C) \longrightarrow \Tricat(\K\op, \Bicat)(R, F)$$
	is surjective on equivalence classes of objects if and only if every weak descent datum for $S$ of elements of $F$ is weakly effective.
\end{prop}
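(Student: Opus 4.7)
The plan is to mirror the unpacking strategy used in Propositions~\ref{sheaveson2cells} and~\ref{effonmorphisms}, but now one categorical dimension up: I want to show that a tritransformation $\alpha\: R \Rightarrow F$ is the same data as a weak descent datum for $S$ of elements of $F$, and that an equivalence $\sigma^W \c S \simeq \alpha$ in $\Tricat(\K\op,\Bicat)(R,F)$ is the same data as witnessing weak effectiveness of the corresponding datum via $W\in F(C)$.

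First, I would take an arbitrary tritransformation $\alpha\: R \Rightarrow F$ (of the kind described in Example~\ref{ourtritrans}) and read off a weak descent datum. For each $f\: D \to C$ in $S$, which is an object of $R(D)$, set $W_f \deq \alpha_D(f)$. For a 2-cell $\gamma\: f \Rightarrow f'$ in $S$, let $\eta_\gamma \deq \alpha_D(\gamma)$. For $g\: E \to D$, the component of the pseudonatural equivalence $\wt{\alpha}_g$ at $f \in R(D)$ is an equivalence that (using $R(g)(f) = \wt{f\c g}$ and $F(g)(W_f) = g\st W_f$) I identify with $\phi^{f,g}\: W_{\wt{f \c g}} \aequi g\st W_f$. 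The invertible modifications $\beta_{g,h}$ and $\gamma_D$ of the tritransformation evaluated at $f\in R(D)$ produce the coherence isomorphisms $\beta^{g,h}_f$ of Diagram~\eqref{weakcocycle} and $\rho_f$; the pseudofunctoriality structure of $\alpha_D$ produces the isomorphism on composites $\eta_{\delta \c \gamma}$ and the normalization at $\id{f}$; the pseudonaturality structure 2-cells of $\wt{\alpha}_g$ (on morphisms and 2-cells of $\K(E,D)$) produce the 2-cells $\rho^{g}_\gamma$ and $(\alpha_\delta)_f$. Conversely, every collection of such data assembles into a tritransformation.

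Second, I would verify that the axioms of Example~\ref{ourtritrans} match exactly the compatibility conditions listed in Definition~\ref{weakdescentdatum}. In detail: the $\beta$-axiom for a chain $M \to L \to E \to D$ composed with a morphism $f\in R(D)$ becomes the four-morphism cocycle condition~\eqref{cocycle4mor}; the two $\gamma$-axioms at $\id{E}$ and $\id{D}$ become~\eqref{idE} and~\eqref{idD}; the remaining pseudonaturality axioms on 2-cells give the compatibilities involving $\eta_\gamma$, $\rho^g_\gamma$ and $(\alpha_\delta)_f$. By Remark~\ref{assYonedatricat} the tritransformation $\sigma^W\c S$ corresponds to the canonical weak descent datum with $W_f = f\st W$ and all $\phi^{f,g}$, $\beta^{g,h}_f$, $\rho_f$ given by the structure of $F$ as a trihomomorphism.

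Third, I would unpack what it means for a morphism $(-\c S)\c\Gamma$ out of $W\in F(C)$ to land in the equivalence class of $\alpha$: this is a trimodification $m\: \sigma^W\c S \aM{} \alpha$ (as in Example~\ref{ourtrimod}) that is an equivalence in the bicategory $\Tricat(\K\op,\Bicat)(R,F)$. The pseudonatural transformations $m_D$ provide, on each $f\in R(D)$, a morphism $\psi^f\: f\st W \to W_f$ (or its pseudo-inverse, giving $W_f \aequi f\st W$ as in the definition); the modifications $\wt{m}_g$ give the invertible 2-cells $(\varepsilon_g)_f$ of Diagram~\eqref{2cella}; the pseudonaturality 2-cells of $m_D$ on morphisms in $\K(E,D)$ give $\psi_\gamma$. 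The two axioms of trimodification evaluated on a component $f$ become exactly Axiom~\eqref{axiom1trimod} and the normalization axiom for $\psi_{\id{f}}$, while the pseudonaturality of $m$ on 2-cells of $\K$ gives~\eqref{epsilongg'}. Finally $m$ being an equivalence in the bicategory of trimodifications forces each $\psi^f$ to be an equivalence in $F(D)$, which is precisely what weak effectiveness requires.

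Combining these three steps: surjectivity of $(-\c S)\c\Gamma$ on equivalence classes of objects says that every tritransformation $\alpha\: R \Rightarrow F$ is equivalent to some $\sigma^W\c S$, which by Steps~1 and~3 is exactly the statement that every weak descent datum is weakly effective. The main obstacle is purely bookkeeping: matching up, term by term and axiom by axiom, the large coherence diagrams of tritransformations and trimodifications with the long list of conditions in Definition~\ref{weakdescentdatum}. This parallels the verifications already carried out in the proofs of Propositions~\ref{sheaveson2cells} and~\ref{effonmorphisms}, but is longer because data and coherences are present at all three levels (objects, morphisms and 2-cells) rather than at the top two levels only.
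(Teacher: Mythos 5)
Your proposal is correct and follows essentially the same route as the paper: unpack an arbitrary tritransformation $\alpha\: R \Rightarrow F$ (as in Example \ref{ourtritrans}) into exactly the data and axioms of a weak descent datum, with $W_f = \alpha_D(f)$, $\phi^{f,g}$ the component of $\wt{\alpha}_g$, and the modifications $\beta$, $\rho$ giving the coherence isomorphisms, and then identify an equivalence $\alpha \simeq \sigma^W \c S$ with a trimodification whose components $\psi^f$ are equivalences, which is precisely weak effectiveness. The only (immaterial) difference is the direction in which you orient the trimodification witnessing the equivalence.
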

Let $X\in F(C)$. The image of $X$ under $(-\c S)\c \Gamma$ is $\sigma^X \c S\: R \Rightarrow F$, where $\sigma^X$ is given as in Remark \ref{assYonedatricat}. Let then $\alpha\: R \Rightarrow F$ be a tritransformation. This is a tritransformation of the kind described in Example \ref{ourtritrans}. Then $\alpha$ is given by a family of pseudofunctors $\alpha_D\: R(D) \to F(D)$ indexed by the objects of $\K$. For every $f\: D\to C$ in $R(D)$ we call $W_f$ the image of $f$ under $\alpha_D$. The pseudofunctoriality of $\alpha_D$ implies that, given $f\: D\to C$ in $R(D)$, there exists an isomorphic 2-cell 
\begin{cd}
	{W_f}  \arrow[r,"{}",equal, bend right=40,""{name=B}] \arrow[r,"{\alpha_D(\id{f})}", bend left =40,""'{name=A}]  \arrow[r,"{}",iso]\& {W_f}
\end{cd}
and that, given morphisms $f,f',f''\: D\to C$ in $R(D)$ and 2-cells $\gamma\: f \Rightarrow f'$ and $\delta\: f' \Rightarrow f''$, there exists an invertible 2-cell
\begin{cd}
	{W_f} \arrow[rr,"{}",iso,shift left=-4ex] \arrow[rr, bend right =40, "{\alpha_D({\delta \c \gamma})}"'] \arrow[r,"{\alpha_D({\gamma})}"] \& {W_{f'}} \arrow[r,"{\alpha_D({\delta})}"] \& {W_{f''}}
\end{cd}
Furthermore, given a morphism $g\: E\to D$ in $\K$, we have a pseudonatural transformation 
	\csq[l][7][7][\alpha_g]{R(D)}{R(E)}{F(D)}{F(E)}{R(g)}{\alpha_D}{\alpha_E}{F(g)}
whose component relative to every $f\: D\to C$ in $R(D)$ is an equivalence
$$(\alpha_g)_f\: W_{\wt{f\c g}} \aequi g\st W_f$$ 
This means that, given $f,f'\: D\to C$ in $R(D)$ and a 2-cell $\gamma\: f \Rightarrow f'$, there exists an isomorphic 2-cell
		\begin{cd}
		{W_{\wt{f\c g}}} \arrow[r,"{(\alpha_g)_f}"] \arrow[d,"{{\alpha_E(\gamma \star g)}}"']\& {g\st W_f} \arrow[d,"{g\st (\alpha_D(\gamma)}"] \arrow[ld,"{\rho^g_{\gamma}}",twoiso, shorten <= 2ex, shorten >= 2ex]\\
		{W_{\wt{f'\c g}}} \arrow[r,"{(\alpha_g)_{f'}}"'] \& {{g'}\st W_{f}}
	\end{cd}
and these 2-cells are assigned in a pseudofunctorial way.

Moreover, the $\alpha_{g}$'s  for $g\: E\to D$ form a pseudonatural transformation $\wt{\alpha}$. So given $g,g'\: E\to D$ and a 2-cell $\delta\: g\Rightarrow g'$, for every $f\: D\to C$ in $R(D)$ we have an invertible 2-cell
	\begin{cd}
	{W_{\wt{f\c g}}} \arrow[r,"{(\alpha_g)_f}"] \arrow[d,"{{R(\delta)_f}}"']\& {g\st W_f} \arrow[d,"{F(\delta)_{W_f}}"] \arrow[ld,"{(\alpha_{\delta})_f}",twoiso, shorten <= 2ex, shorten >= 2ex]\\
	{W_{\wt{f\c g'}}} \arrow[r,"{(\alpha_{g'})_f}"'] \& {{g'}\st W_{f}}
\end{cd}
and these 2-cells are assigned in a pseudofunctorial way.

Furthermore, given $L\ar{h} E \ar{g} D$ in $\K$, we have invertible modifications 
	\begin{eqD*}
	\scalebox{0.9}{
		\begin{cdN}
			{R(D)} \arrow[r,"{R(g)}"] \arrow[d,"{\alpha_D}"'] \&  {R(E)} \arrow[ld,"{\alpha_g}", Rightarrow ,shorten <=3.5ex, shorten >= 3.5ex] \arrow[r,"{R(h)}"]  \arrow[d,"{\alpha_E}"]\& {R(L)} \arrow[ld,"{\alpha_h}", Rightarrow ,shorten <=3.5ex, shorten >= 3.5ex] \arrow[d,"{\alpha_L}"] \\
			{F(D)} \arrow[r,"{F(g)}"]  \arrow[rr,"{F(g\c h)}"',""{name=D}, bend right = 55] \&  {F(E)}  \arrow[to=D,"{\chi_{g,h}}" ,shorten <=1.5ex, shorten >= 1.5ex, Rightarrow] \arrow[r,"{F(h)}"] \& {F(L)}
		\end{cdN}
		\h[-9]
		\begin{cdN}
			{\hphantom{.}}  \arrow[r,"{\beta^{g,h}}", triple]\& {\hphantom{.}}
		\end{cdN}
		\h[-9]
		\begin{cdN}
			{} \& {R(E)} \arrow[d,"{\chi_{g,h}}", Rightarrow ,shorten <=2.5ex, shorten >= 2.5ex] \arrow[rd,"{R(h)}", bend left= 30] \& {} \\
			{R(D)} \arrow[ru,"{R(g)}", bend left= 30] \arrow[rr,"{R(g\c h)}"',""{name=E}]  \arrow[d,"{\alpha_D}"']\& {\hphantom{.}} \& {R(L)} \arrow[lld,"{\alpha_{g\c h}}", Rightarrow ,shorten <=9.5ex, shorten >= 9.5ex]\arrow[d,"{\alpha_L}"] \\
			{F(D)} \arrow[rr,"{F(g\c h)}"']\& {} \& {F(L)} 
	\end{cdN}}
\end{eqD*}
and 
\begin{eqD*}
	\begin{cdN}
		{R(D)} \arrow[r,"{R(\id{D})}"', ""{name=K}] \arrow[r,"{}"{name=H}, equal, bend left= 60] \arrow[d,"{\alpha_D}"'] \arrow[from=H, to=K ,"{\iota_D}", Rightarrow ,shorten <=1ex]\& {R(D)} \arrow[ld,"{\alpha_{\id{D}}}", Rightarrow,shorten <=3.5ex, shorten >= 3.5ex] \arrow[d,"{\alpha_D}"] \\
		{F(D)} \arrow[r,"{F(\id{D})}"']\& {F(D)} 
	\end{cdN}
	\h[-3]
	\begin{cdN}
		{\hphantom{.}}  \arrow[r,"{\rho}", triple]\& {\hphantom{.}}
	\end{cdN}
	\h[-3]
	\begin{cdN}
		{R(D)} \arrow[r,equal]  \arrow[d,"{\alpha_D}"'] \& {R(D)} \arrow[ld,"", equal,shorten <=4.5ex, shorten >= 4.5ex] \arrow[d,"{\alpha_D}"] \\
		{F(D)} \arrow[r,"{F(\id{D})}"'{name=H}, bend right= 60] \arrow[r,equal,""{name=G}] \arrow[from=G, to=H,"{\iota_D}",Rightarrow ,shorten <=1.5ex, shorten >= 1.5ex]\& {F(D)} 
	\end{cdN}
\end{eqD*}
These modifications have components relative to $f\: D\to C$ 	
\begin{cd}
	{W_{\wt{\wt{f\c g}\c h}}} \arrow[d,"{}",aiso]\arrow[r,"{ (\alpha_h)_{\wt{f\c g}}}"] \& {g\st W_{\wt{f\c g}}} \arrow[r,"{h\st (\alpha_g)_f}"]\& {h\st g\st W_f} \arrow[d,"{}",aeq] \arrow[lld,"{{\beta}^{g,h}_f}", Rightarrow ,shorten <=9.5ex, shorten >= 9.5ex]\\
	{W_{\wt{f\c g\c h}}} \arrow[rr,"{(\alpha_{g\c h})_f}"'] \& {} \& {(g\c h)\st W_f}
\end{cd}
and 
\begin{cd}
	{W_f}  \arrow[r,"{}",aiso, bend right=40,""{name=B}] \arrow[r,"{(\alpha_{\id{D})_f}}", bend left =40,""'{name=A}]  \arrow[twoiso,from=A, to=B,"\rho_f", shorten >=1ex, shorten <=1ex]\& {\id{D}\st W_f} \\
\end{cd}

We observe that the data of $\alpha$ are exactly the same of those of a weak descent datum on $S$ of elements of $F$. The assignment on a morphism $f\: D\to C$ in $R(D)$ is the image $W_f$ of $f$ under the pseudofunctor $\alpha_D$ and the assignment on a 2-cell $\gamma\: f \Rightarrow f'$ is the morphism $\alpha_D(\gamma)$. Moreover, the equivalence $\phi^{f,g}$ coincides with the equivalence $(\alpha_g)_f$. And the axioms of tritransformation for $\alpha$ (see Example \ref{ourtritrans}) computed on $f:D\to C$ are exactly the conditions of diagrams (\ref{cocycle4mor}), (\ref{idE}) and (\ref{idD}).

It suffices, then, to prove that the tritransformation $\alpha$ is equivalent to a tritransformation of the form $((-\c S)\c \Gamma)(W)$ for some object $W\in F(C)$ if and only if the corresponding weak descent datum is weakly effective.

If $\alpha$ is equivalent to $((-\c S)\c \Gamma)(W)$ , this means that there exists a trimodification $\varepsilon\: \alpha \aM{} \sigma^X \c S$ whose component 
$$(\varepsilon_D)_f\: W_f \ar{}f\st W$$
 is an equivalence for every $D\in K$ and every $f\: D\to C$ in $R(D)$. This happens exactly when the weak descent datum that corresponds to $\alpha$ is weakly effective with equivalence $\psi^f$ equal to $(\varepsilon_D)_f$. Indeed, the isomorphic 2-cells and the coherence conditions required by the definition of weak effectiveness coincide with the structure 2-cells and the axioms required for $\varepsilon$ to be a trimodification.
 
We are now able to prove the theorem of characterization of the notion of 2-stack in terms of explicit conditions.

\begin{teor}\label{char2stacks}
	A trihomomorphism $F\: \K\op \to \Bicat$ is a 2-stack if and only if for every $C\in \K$ and every bisieve $S\in \tau(C)$ the following conditions are satisfied:
	\begin{itemize}
		\item [(O)] every weak descent datum for $S$ of elements of $F$ is weakly effective;
		\item [(M)] every descent datum for $S$ of morphisms of $F$ is effective;
		\item [(2C)] every matching family for $S$ of 2-cells of $F$ has a unique amalgamation.
	\end{itemize}
\end{teor}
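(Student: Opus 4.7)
The proof plan is to assemble three characterization results already proven in the paper, each handling one level of cell structure, and then invoke the folklore characterization of biequivalences to glue them together. Concretely, by Proposition \ref{charYoneda}, the trihomomorphism $F$ is a 2-stack if and only if, for every $C\in \K$ and every covering bisieve $S\in \tau(C)$, the composite pseudofunctor
$$(-\c S)\c \Gamma\: F(C) \longrightarrow \Tricat(\K\op, \Bicat)(R, F)$$
is a biequivalence of bicategories, where $\Gamma$ is the biequivalence supplied by the Tricategorical Yoneda Lemma (Theorem \ref{Yonedatricat}).

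Next I would apply Proposition \ref{charbieq}, which says that (assuming the axiom of choice) a pseudofunctor between bicategories is a biequivalence precisely when it is (1) surjective on equivalence classes of objects, (2) essentially surjective on morphisms, and (3) fully faithful on 2-cells. So the statement that $(-\c S)\c \Gamma$ is a biequivalence breaks into these three separate conditions, to be checked for every $C\in \K$ and every $S\in \tau(C)$.

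The bulk of the work is then contained in the three translation propositions already established in this section. By Proposition \ref{weffonobjects}, condition (1) (surjectivity on equivalence classes of objects) is equivalent to condition (O): every weak descent datum for $S$ of elements of $F$ is weakly effective. By Proposition \ref{effonmorphisms}, condition (2) (essential surjectivity on morphisms) is equivalent to condition (M): every descent datum for $S$ of morphisms of $F$ is effective. By Proposition \ref{sheaveson2cells}, condition (3) (fully-faithfulness on 2-cells) is equivalent to condition (2C): every matching family for $S$ of 2-cells of $F$ has a unique amalgamation. Putting these three equivalences together gives exactly the statement of the theorem.

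The real conceptual difficulty has already been absorbed in the three preliminary propositions, where the data of tritransformations, trimodifications and perturbations with target $F$ and source $R$ were unpacked via the Tricategorical Yoneda Lemma (Remark \ref{assYonedatricat}) and matched, coherence by coherence, with the explicit gluing notions. Thus the present theorem requires no further computation: it is a clean assembly of Proposition \ref{charYoneda}, Proposition \ref{charbieq}, Proposition \ref{weffonobjects}, Proposition \ref{effonmorphisms} and Proposition \ref{sheaveson2cells}. The only subtlety is to observe that the three conditions factor independently over the three levels of cells, so that they can be checked one at a time without interference, which is precisely what Proposition \ref{charbieq} guarantees.
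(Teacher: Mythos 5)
Your proposal is correct and follows exactly the paper's own argument: the paper's proof is a one-line assembly of Proposition \ref{charYoneda} with Propositions \ref{sheaveson2cells}, \ref{effonmorphisms} and \ref{weffonobjects}, with the characterization of biequivalences (Proposition \ref{charbieq}) doing the same implicit gluing work you make explicit. Nothing is missing.
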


\begin{proof}
	Straightforward using Proposition \ref{charYoneda}, Proposition \ref{sheaveson2cells}, Proposition \ref{effonmorphisms} and Proposition \ref{weffonobjects}.
\end{proof}

The use of Theorem \ref{char2stacks} brings substantial advantages in calculations when proving that a certain trihomomorphism is a 2-stack. In \cite{Prin2bunquo2stacks}, we will use this result to prove that,when the underlying 2-category is nice enough, our higher dimensional analogues of the quotient stacks are 2-stacks.
 
\bibliographystyle{abbrv} 
\bibliography{Bibliography_2stacks.bib}
\end{document}